\declaretheorem{theorem}
\declaretheorem{corollary}
\declaretheorem{lemma}
\declaretheorem{proposition}
\declaretheorem{observation}
\declaretheorem{fact}
\declaretheoremstyle[qed=$\square$]{definitionwithend}
\declaretheorem[style=definitionwithend]{definition}
\declaretheorem[style=definitionwithend]{assumption}
\declaretheorem[style=definitionwithend]{example}
\declaretheorem[style=definitionwithend]{remark}
\crefname{fact}{Fact}{Facts}
\definecolor{gold}{rgb}{0.85,0.65,0}
\newcommand{\abs}[1]{\ensuremath{\left\lvert #1 \right\rvert}}
\newcommand{\by}{\times}
\newcommand{\norm}[1]{\ensuremath{\left\lVert #1 \right\rVert}}
\newcommand{\ip}[1]{\ensuremath{\left\langle #1 \right\rangle}}
\let\emptyset\varnothing
\newcommand{\set}[1]{\left\{#1\right\}}
\newcommand{\bb}{\mathbb}
\def\L{{\mathbb{L}}}
\def\R{{\mathbb{R}}}
\def\S{{\mathbb{S}}}
\def\bS{{\mathbf{S}}}
\def\cB{{\cal B}}
\def\cE{{\cal E}}
\def\cL{{\cal L}}
\def\cM{{\cal M}}
\def\cN{{\cal N}}
\def\cP{{\cal P}}
\def\cR{{\cal R}}
\def\cS{{\cal S}}
\def\cT{{\cal T}}
\def\cY{{\cal Y}}
\DeclareMathOperator{\rank}{rank}
\DeclareMathOperator{\Diag}{Diag}
\DeclareMathOperator{\tr}{tr}
\DeclareMathOperator{\range}{range}
\DeclareMathOperator*{\E}{\mathbb{E}}
\DeclareMathOperator{\spann}{span}
\DeclareMathOperator{\inter}{int}
\DeclareMathOperator{\cl}{cl}
\DeclareMathOperator{\bd}{bd}
\DeclareMathOperator{\epi}{epi}
\DeclareMathOperator{\conv}{conv}
\DeclareMathOperator{\cone}{cone}
\DeclareMathOperator{\clconv}{clconv}
\DeclareMathOperator{\clcone}{clcone}
\def\extr{{\mathop{\rm extr}}}
\DeclareMathOperator{\Sym}{Sym}
\begin{document}

\title{Necessary and sufficient conditions for rank-one generated cones}
\author[1]{C.J.\ Argue}
\author[1]{Fatma K{\i}l{\i}n\c{c}-Karzan}
\author[1]{Alex L.\ Wang}
\affil[1]{Carnegie Mellon University, Pittsburgh, PA, 15213, USA.}
\date{\today}

\maketitle

\begin{abstract}

A closed convex conic subset $\cS$ of the positive semidefinite (PSD) cone is rank-one generated (ROG) if all of its extreme rays are generated by rank-one matrices. 
The ROG property of $\cS$ is closely related to the exactness of SDP relaxations of nonconvex quadratically constrained quadratic programs (QCQPs) related to $\cS$.
We consider the case where $\cS$ is obtained as the intersection of the PSD cone with finitely many homogeneous linear matrix inequalities and conic constraints and identify sufficient conditions that guarantee that $\cS$ is ROG.
Our general framework allows us to recover a number of well-known results from the literature. 
In the case of two linear matrix inequalities, we also establish the necessity of our sufficient conditions.
This extends one of the few settings from the literature---the case of one linear matrix inequality and the S-lemma---where an explicit characterization for the ROG property exists.
Finally, we show how our ROG results on cones can be translated into inhomogeneous SDP exactness results and convex hull descriptions in the original space of a QCQP.
We close with a few applications of these results; specifically, we recover the well-known perspective reformulation of a simple mixed-binary set via the ROG toolkit. \end{abstract}

\section{Introduction} \label{sec:introduction}

Let $\S^n$ denote the real vector space of $n\times n$  real symmetric matrices and $\S^n_+$ the cone of positive semidefinite matrices.
We will say that a closed convex cone $\cS\subseteq \S^n_+$ is \emph{rank-one generated} (ROG)\footnote{We will see in \cref{lem:rog_iff_extreme_rank_one} that the definitions of ROG cones given in the first sentence of the abstract and the second sentence of the main body are equivalent. For the purposes of our developments, we will begin with the definition given in the main body.} if
\begin{align*}
\cS =\conv(\cS \cap\set{xx^\top:\, x\in\R^n}),
\end{align*}
where $\conv(\cdot)$ is the convex hull operation.
In words, a closed convex cone $\cS$ is ROG if and only if it is equal to the convex hull of its rank-one matrices.

In most applications, the cone $\cS\subseteq\S^n_+$ will be represented as the intersection of $\S^n_+$ with a (possibly infinite) system of linear matrix inequalities (LMIs). Specifically, we will consider cones of the form
\begin{align*}
\cS(\cM) \coloneqq \set{X\in\S^n_+:\, \ip{M,X}\geq 0,\,\forall M\in\cM},
\end{align*}
where $\cM\subseteq\S^n$.
Note also that any closed convex cone $\cS\subseteq\S^n_+$ can be expressed in this form. An obvious question then is: What does the ROG property of $\cS(\cM)$ correspond to in terms of $\cM$, its defining LMIs?

While our main focus will be on closed convex cones, our results also have implications in the more general setting of arbitrary closed convex sets $\cS\subseteq\S^n_+$ and their defining LMIs.

\subsection{Motivation}
\label{subsec:motivation}
The ROG property is important in studying semidefinite program (SDP) relaxations of quadratically constrained quadratic programs (QCQPs).

QCQPs are a fundamental class of optimization problems that arise naturally in many areas. Indeed, many problems including binary integer linear programs, max-cut, max-clique, certain robust optimization problems  and polynomial optimization problems can be readily recast as QCQPs (see~\cite{phan1982quadratically,bao2011semidefinite,benTal2009book} and references therein).

It is well known that any QCQP can be reformulated as an SDP in a lifted space with an additional nonconvex rank constraint.
Dropping this rank constraint leads to the standard SDP relaxation~\cite{shor1990dual}. 
A general QCQP and its SDP relaxation are given by
\begin{align}
\label{eq:qcqp_sdp}
\inf_{y\in\R^{n-1}}\set{q_0(y):\,q_i(y)\geq 0,\,\forall i\in[m]} &= \inf_{x\in\R^n}\set{x^\top M_0 x:\,\begin{array}
	{l}
	x^\top M_i x \geq 0,\,\forall i\in[m]\\
	x_1^2 = 1
\end{array}}\nonumber\\
&\geq \inf_{X\in\S^n_+} \set{\ip{M_0, X}:\, \begin{array}
	{l}
	\ip{M_i, X}\geq 0,\,\forall i\in[m]\\
	X_{1,1} = 1
\end{array}}.
\end{align}
Here, $[m]\coloneqq\set{1,\ldots,m}$, the functions $q_i$ are quadratic
functions of the form $q_i(y) = y^\top A_i y + 2b_i^\top y + c_i$, the vector
$x$ should be thought of as $(\begin{smallmatrix}1 \\ y\end{smallmatrix})$, and
the matrices $M_i$ are defined as $M_i \coloneqq \left(\begin{smallmatrix}
	c_i & b_i^\top\\ b_i & A_i
\end{smallmatrix}\right)$.

In general, it is NP-hard to determine whether the SDP relaxation of a given QCQP is \textit{exact}, i.e., when equality holds in~\eqref{eq:qcqp_sdp} (see~\cite{laurent1995positive}). Nevertheless, sufficient conditions that ensure equality in~\eqref{eq:qcqp_sdp} are of great interest, and thus establishing such conditions has attracted a lot of attention in the literature.

Geometrically, SDP exactness occurs if and only if there exist rank-one matrices in the feasible domain of the SDP approaching its optimum value.
The ROG property is a similar but stronger notion of exactness. Specifically, if
the cone
\begin{align}
\label{eq:cone_cS_cM}
\cS(\set{M_1,\dots,M_m}) = \set{X\in\S^n_+:\, \begin{array}
	{l}
	\ip{M_i,X}\geq 0,\,\forall i\in[m]\\
\end{array}}
\end{align}
is ROG, then there exist rank-one matrices in the right hand side of 
\eqref{eq:qcqp_sdp}
approaching its optimum value \emph{for every} choice of $M_0$ such that the 
right hand side of \eqref{eq:qcqp_sdp} is finite. In other words, if the cone in 
\eqref{eq:cone_cS_cM}  is ROG, then equality holds in \eqref{eq:qcqp_sdp} \emph{for every} choice of objective function such that the SDP value is finite.
In the case of homogeneous QCQPs, i.e., where all $b_i=0$ and $c_i=0$ for $i=0,1,\ldots,m$, then \eqref{eq:cone_cS_cM}  is ROG if and only if the underlying SDP relaxation is exact for every choice of objective function. 
See \cref{subsec:QCQP_exactness} for a more detailed discussion of how equality
holding in \eqref{eq:qcqp_sdp} relates to the ROG property of $\cS(\set{M_1,\ldots,M_m)})$.

The ROG property is a natural strengthening of SDP exactness.
Consider, for example, the problem of minimizing an arbitrary quadratic function over an ellipsoid. The celebrated S-lemma \cite{yakubovich1971s} guarantees that the SDP relaxation of this problem is exact regardless of the choice of objective function.
One way of reinterpreting this statement is as the fact that
\begin{align*}
\cS(\set{M_1}) = 
\set{X\in\S^n_+:\, \begin{array}
	{l}
	\ip{M_1,X}\geq 0
\end{array}}
\end{align*}
is ROG when $M_1$ corresponds to an ellipsoid constraint.\footnote{Along with the observation that the SDP relaxation of this problem is always bounded.}
From a different perspective, the ROG property of spectrahedra can be thought of as an analogue of the \emph{integrality} property of polyhedra for linear programming relaxations of integer programs.
While there are well-known sufficient conditions such as total unimodularity or total dual integrality for the integrality property of polyhedra (see \cite{deCarliSilva2020notion} for recent developments and earlier references), 
the research on sufficient conditions for the ROG property of spectrahedra is much more recent and limited.

The ROG property is also relevant in the context of sum-of-squares (SOS) programming.
Consider a real homogeneous quadratic variety $V \coloneqq \set{x\in\R^n:\, x^\top M_i x = 0 ,\,\forall i\in[m]}$. Let $\cP_V$ denote the set of nonnegative quadratic forms on $V$, i.e., $\cP_V \coloneqq \set{M\in\S^n:\, x^\top M x \geq 0,\,\forall x\in V}$. Let $\Sigma_V$ denote the set of quadratic forms that are ``immediately nonnegative'' on $V$, i.e., $\Sigma_V \coloneqq \S^n_+ + \spann\set{M_i:\,i\in[m]}$, where $\spann(\cdot)$ is the span (linear hull) of the given elements.

It is clear that $\Sigma_V\subseteq\cP_V$.
A direct calculation shows that the dual cones of $\cP_V$ and $\Sigma_V$ are given by
\begin{align*}
\cP_V^* = \conv\set{xx^\top:\,\ip{M_i, xx^\top}= 0 ,\,\forall i\in[m]}
\quad \text{and} \quad
\Sigma^*_V= \set{X\in\S^n_+:\, \ip{M_i,X} = 0,\,\forall i\in[m]},
\end{align*}
respectively. Therefore, $\Sigma_V = \cP_V$ if and only if $\Sigma_V^*=\cP_V^*$, which holds if and only if $\Sigma_V^*$ is rank-one generated. In other words, every nonnegative quadratic form on $V$ is ``immediately nonnegative'' if and only if $\Sigma_V^*$ is ROG. 
See \cite[Section 6]{blekherman2017sums} for further connections and applications of the ROG property in the context of real algebraic geometry and statistics.

\subsection{Related literature}\label{sec:related}
\paragraph{Bounds on the rank of extreme points of general spectrahedra.}
A rich line of research has proven optimal worst-case bounds on the rank of 
extreme points of a spectrahedron (an affine slice of the PSD cone) in terms of the number of its defining linear matrix equalities (LMEs) \cite{barker1975cones,pataki1998rank}; see also \cite[Chapter II.13]{barvinok2002book}.
It is known that given $m$ LMEs, if there exists a positive semidefinite (PSD) solution to the LMEs, then there also exists a PSD solution with rank at most $r$ for any integral $r$ such that 
\begin{align*}
m < \binom{r+2}{2}.
\end{align*}
From this, we may deduce\footnote{After taking into account an additional LME due to the objective function and applying Strasziewicz Theorem (see \cite[Theorem 18.6]{rockafellar1970convex}).} that any spectrahedron defined by $m$ LMEs has only extreme points of rank at most $r$ for any integral $r$ satisfying $m+1 < \binom{r+2}{2}$.
In particular, taking $r = 1$, this bound implies that any spectrahedron defined by a single LME is ROG. Unfortunately, this bound does not shed much light onto (even the existence of) ROG spectrahedra in the case where $m>1$.
Although this bound is tight in general, it does not exploit potential structure in the defining LMEs. In other words, it is possible to achieve stronger bounds on the rank of extreme points of spectrahedra with additional structure. 
Our work complements this line of research by examining properties of systems of LMEs and LMIs that guarantee the ROG property beyond the case of $m = 1$.

\paragraph{SDP exactness.}
The question of when equality holds in \eqref{eq:qcqp_sdp} has attracted significant interest. Within this line of research, a number of papers study the classical trust region subproblem (TRS)---the problem of minimizing a nonconvex quadratic function over an ellipsoid---and its variants, and identify cases under which an exact SDP reformulation is possible.
This line of work can be traced back to Yakubovich's S-procedure \cite{yakubovich1971s,fradkov1979s-procedure} (also known as the S-lemma) and the work of \citet{sturm2003cones}.
We refer the interested readers to the excellent survey by \citet{burer2015gentle} and references therein.

It is worth noting that although the results in \cite{burer2015gentle} are stated in terms of the exactness of (strengthened) SDP relaxations, the underlying arguments in fact establish the ROG property for the corresponding SDP feasible domains.
For example, the domain of the SDP relaxation associated with the classical TRS is the intersection of $\S^n_+$ with a single LMI, which is well known to be ROG via S-lemma. In the other variants of TRS examined in \cite{burer2015gentle}, the domain of the associated exact SDP reformulation involves at least one problem specific conic constraint (in fact a second-order cone constraint), and consequently is described by an infinite family of well-structured LMIs.

These lines of work can be thought of as addressing the special case where there are only a few (usually one or two) nonconvex quadratic functions in the QCQP on the left of \eqref{eq:qcqp_sdp}. In contrast, \citet{burer2019exact} and \citet{wang2019tightness} recently introduced more general sufficient conditions for SDP exactness which do not make explicit assumptions on the number of nonconvex quadratic functions.
As an example, it can be shown that SDP exactness holds whenever a natural
symmetry parameter of the QCQP is large enough and the set of convex Lagrange (dual)
multipliers is polyhedral~\cite{wang2019tightness}.
See also~\cite{wang2020geometric} for sufficient conditions that make weaker assumptions on
the geometry of the set of convex Lagrange multipliers.
Some of these sufficient conditions for SDP exactness~\cite{wang2019tightness,wang2020geometric} have also been shown to guarantee that the (projection of the) epigraph of the SDP relaxation coincides exactly with the convex hull of the epigraph of the QCQP. In particular, the convex hulls of epigraphs of ``highly-symmetric'' QCQPs with favorable geometry are semidefinite-representable.
Results in this line of work generally depend heavily on how the objective function interacts with the constraints. Our work complements this line of research by establishing conditions for SDP exactness which are oblivious to the objective function.

\paragraph{Algebro-geometric properties of ROG spectrahedra.}
The ROG property has also been studied from a more algebro-geometric perspective \cite{hildebrand2016spectrahedral,blekherman2017sums}.

\citet{hildebrand2016spectrahedral} studies algebraic properties of ROG cones obtained by adding homogeneous LMEs to $\S^n_+$, and proves important facts about their representations.
The study begins by exploring the minimal defining polynomials and facial structure of ROG cones.
These properties are then used to build the main contribution of \cite{hildebrand2016spectrahedral}: The geometry of an ROG cone determines its representation as a linear section of a PSD cone (of any dimension) uniquely up to an isomorphism on the underlying vector space.
Additional results in this paper include a complete classification of ROG cones of degree\footnote{This is the degree of the minimal defining polynomial. This quantity is shown to be equivalent to the maximum rank over matrices in the ROG cone.} at most four as well as a number of operations on ROG cones (the direct product, full extension, and intertwining operations) that preserve the ROG property.

\citet{blekherman2017sums} study the ROG property of the cones $\Sigma_V^*$ (see \cref{subsec:motivation}) using techniques from real algebraic geometry and establish a connection between the geometry of $\Sigma_V^*$ and the property $N_{2,p}$ of the defining ideal of $V$.\footnote{A real projective variety $V$ satisfies property $N_{2,p}$ for an integer $p\geq 1$ if the $j$th syzygy module of the homogeneous ideal of $V$ is generated in degree at most $j+2$ for all $j<p$.}
Specifically, one of the main results in \cite{blekherman2017sums} is that, for general real projective varieties $V$, if $\Sigma_V^*$ has an extreme ray of rank $p>1$ then $V$ does not satisfy the property $N_{2,p}$.
This result is then strengthened in \cite[Theorem 20]{blekherman2017sums} to show that a spectrahedral cone $\cS$ defined by LMEs is ROG if and only if $\cS = \Sigma_V^*$ for a non-degenerate, reduced, 2-regular, totally real scheme $V$.
Finally, \cite{blekherman2017sums} also examines consequences of this connection to problems from real algebraic geometry, convex geometry, statistics, and real analysis, such as the positive semidefinite matrix completion problem.

In contrast to \cite{hildebrand2016spectrahedral,blekherman2017sums}, our results deal with possibly infinitely many linear matrix inequalities. The ROG property of such sets is not obvious and does not follow immediately from the ROG property of spectrahedral cones defined by LMEs. Indeed, we will see that both replacing equalities with inequalities (\cref{rem:chordal_nonexample}) and lifting inequalities to equalities (\cref{ex:lifting_rog_to_non_rog}) can destroy the ROG property of a spectrahedral cone. 
In addition, our more general setup allows us to handle additional interesting spectrahedral cones that have conic constraints, for example those arising from variants of the TRS.
We also discuss implications of the ROG property in terms of the exactness of SDP relaxations of QCQPs and explicit convex hull characterizations of sets defined by quadratic inequality constraints.
Finally, all of the proofs in this paper follow from elementary linear algebra and convex analysis. In particular, we hope that our results and their proofs shed light on the ROG property for readers less familiar with algebraic geometry.

\paragraph{ROG spectrahedra arising from PSD matrix completion.}
The ROG property has also been studied for spectrahedra arising in the matrix completion literature.
PSD matrix completion arises in a number of areas---for example in statistics, this problem is related to maximum likelihood estimation in Gaussian graphical models~\cite{dempster1972covariance}.
Let $E$ denote the edge set of an undirected graph on $n$ vertices that contains all self-loops. Let $K\subseteq\S^n$ denote the projection of $\S^n_+$ onto the indices in $E$. Then, a matrix $Y$ that is specified only on $E$ has a PSD completion if and only if it lies in the cone $K$. A short calculation shows that
\begin{align*}
K = \set{Y\in\S^n:\, \begin{array}
	{l}
	Y_{i,j} = 0,\,\forall (i,j)\notin E\\
	\ip{X,Y} \geq 0,\,\forall X\in \cS
\end{array}},
\qquad\text{where}\qquad
\cS = \set{X\in\S^n_+:\, X_{i,j} = 0,\,\forall(i,j)\notin E}.
\end{align*}
Consequently, the condition that every fully specified submatrix of $Y$ is positive semidefinite is necessary and sufficient for $Y$ to have a PSD completion if and only $\cS$ is ROG.
It is well-known that $\cS$ is ROG if and only if $E$ is the edge set of a chordal graph\footnote{A graph is chordal if every minimal cycle in the graph has at most 3 edges.} on $n$ vertices \cite{grone1984positive,agler1988positive,paulsen1989schur}.

\subsection{Overview and outline of the paper}

In this paper, we study necessary and/or sufficient conditions under which the intersection of the positive semidefinite cone with a set of homogeneous LMIs is an ROG cone. A summary of our contributions, along with an outline of the paper, is as follows:
\begin{enumerate}[(a)]
\item In \cref{sec:basic_tools}, we introduce our main terminology and basic tools.
Specifically, we show how the ROG property behaves when we switch from linear matrix inequalities (LMIs) to linear matrix equalities (LMEs) and how the ROG property for LMEs is characterized by the existence of solutions of quadratic systems.
In \cref{sec:knownbasicROGsets}, using our basic tools, we recover the well-known ROG set defined by a single LMI/LME (i.e., the S-lemma) and discuss a few implications for a simple sufficient condition in the case of two LMIs/LMEs. 

\item In \cref{sec:sufficient_conditions}, we establish a number of new sufficient conditions for the ROG property. As an example, we show that $\cS$ is ROG when $\cS = \set{X\in\S^n_+:\, Xc\in K}$ for a fixed vector $c$ and an arbitrary closed convex cone $K$. We also provide a number of examples to demonstrate that even simple extensions of our sufficient conditions are not possible.
We conclude this section by recovering the well-known result that the SDP relaxation strengthened with a second-order cone reformulation-linearization technique (SOC-RLT) inequality is exact for the variant of the TRS with a single linear inequality constraint.

\item A well-known consequence of the S-lemma is that the set $\cS(\cM)$ is ROG whenever $\cM = \set{M}$ is a single LMI; see e.g., \citet[Lemma 2.2]{ye2003new}.
In \cref{sec:necessary_conditions}, we give a complete characterization of ROG cones defined by two LMIs. 
One of our main results states a necessary and sufficient condition on the matrices $M_1$ and $M_2$ which ensures that the set $\cS$ is ROG. In particular, we establish in Theorem~\ref{thm:two_LMI_NS} that such a set is ROG if and only if the LMIs defined by $M_1$ and $M_2$ either ``only interact'' on a single face of $\S^n_+$ where they
 induce the same inequality constraint or both $M_1$ and $M_2$ have a specific indefinite rank-two structure.
We conclude that in the case of $m = 2$, there exist simple certificates of the ROG property.
\item In \cref{sec:rog_motivation}, we give a few applications of ROG cones.
In particular, we show how results on the ROG property of convex cones can be translated into inhomogeneous SDP exactness results and SDP-based convex hull descriptions of quadratically constrained sets. 
We then apply our ROG-based sufficient condition for exactness of the SDP relaxation to a simple set involving binary and continuous variables linked through a complementarity constraint. This gives a new method for deriving the well-known perspective reformulation for the convex hull of this set.
We close this section by presenting a number of examples that highlight how our ROG-based sufficient conditions for the SDP exactness and convex hull descriptions differ from other SDP exactness conditions in the literature.
The results in this section are self-contained and serve as additional motivation for the main study.
\end{enumerate}

We will compare our results with the literature in further detail in the sections as outlined above.

\subsection{Notation} \label{sub:notation}
For a positive integer $m$, let $[m]\coloneqq\set{1,\dots,m}$.
Let $\R_+^n$ denote the nonnegative vectors in $\R^n$. For $i\in[n]$, let $e_i\in\R^n$ denote the $i$th standard basis vector. Let $\bS^{n-1}$ denote the unit sphere in $\R^n$.
Let $\S^n$ denote the vector space of $n\times n$ real symmetric matrices and $\S^n_+$ the cone of positive semidefinite matrices.
We write $M\succeq 0$ (respectively $M\succ 0$) if $M$ is positive semidefinite (respectively positive definite).
For $M\in\R^{n\by n}$, let $\Sym(M)\coloneqq (M+M^\top) / 2\in\S^n$.
For $x\in\R^n$, let $\Diag(x)\in\S^n$ denote the diagonal matrix with $\Diag(x)_{i,i} = x_i$ for all $i\in[n]$. 
For a positive integer $n$, let $I_n$ denote the $n\times n$ identity
matrix. When the dimension is clear from context, we will simply write $I$ instead of $I_n$. 
For $M\in\S^n$, let $\range(M), \ker(M), \rank(M),\det(M),\tr(M)$ denote the range, kernel, rank, determinant and trace of $M$, respectively.
Let $\E$ denote an arbitrary Euclidean space.
Given a subset $\cM\subseteq\E$, let $\cl(\cM)$, $\inter(\cM)$, $\bd(\cM)$, $\conv(\cM)$, $\clconv(\cM)$, $\cone(\cM)$, $\clcone(\cM)$, and $\spann(\cM)$ denote the closure, interior, boundary, convex hull, closed convex hull, conic hull, closed conic hull, and span (linear hull) of $\cM$, respectively. For $\cM\subseteq\E$, let $\cM^\perp$ denote the subspace orthogonal to $\cM$.
For a subspace $W\subseteq \E$, let $\dim(W)$ denote its dimension. 
For a cone $K\subseteq\E$, let $\extr(K)$ denote its extreme rays and define $K^*\coloneqq\set{y\in\E:~\ip{x,y}\geq0,\,\forall x\in K}$ to be the dual cone of $K$. 
Given a subspace $W\subseteq \R^n$, 
we will identify $W$ with $\R^{\dim(W)}$. 
Let $\S^W$ denote $\S^{\dim(W)}$ identified with the linear subspace of $\S^n$ given by $\set{X\in\S^n:\, \range(X) \subseteq W}$.
For $x\in\R^n$, let $x_W\in W$ denote the projection of $x$ onto $W$.
For $M\in\S^n$, let $M_W\in\S^W$ denote the restriction of $M$ to $W$, i.e., $M_W \coloneqq U^\top MU$, where $U:W\to\R^n$ is the inclusion map.
When there is no confusion, let $0_{n}$ denote either the zero vector in $\R^n$ or the zero matrix in $\S^n$. Similarly, let $0_W$ denote either the zero vector in $W$ or the zero matrix in $\S^W$.
For $x\in W$ and $y \in W^\perp$, let $x\oplus y$ denote their direct sum.
For $X\in \S^W$ and $Y\in\S^{W^\perp}$, let $X\oplus Y$ denote their direct sum, i.e., the unique matrix in $\S^n$ such that $(x\oplus y)^\top (X\oplus Y)(x\oplus y) = x^\top X x + y^\top Y y$ for all $x\in W$ and $y\in W^\perp$.

\section{Properties of ROG cones}
\label{sec:basic_tools}

\subsection{Definitions}

Given $\cM\subseteq \S^n$, define
\begin{align*}
\cS(\cM)\coloneqq \set{X\in\S^n_+:\, \ip{M,X} \geq 0,\,\forall M\in\cM}.
\end{align*}
Note that $\cS(\cM)$ is a closed convex cone.
We are interested in the following property of such sets.
\begin{definition}\label{def:ROG}
A closed convex cone $\cS\subseteq \S^n_+$ is \emph{rank-one generated} (ROG) if
\begin{align*}
\cS &= \conv(\cS \cap \set{xx^\top:\, x\in\R^n}).\qedhere
\end{align*}
\end{definition}
\begin{remark}
Note that when $\cS\subseteq\S^n_+$ is a closed convex cone, we have $\conv(\cS\cap\set{xx^\top:\, x\in\R^n}) = \clconv(\cS\cap \set{xx^\top:\, x\in\R^n})$.\qedhere
\end{remark}

We will make extensive use of the following definitions and basic facts.
\begin{definition}
For $X\in\S^n$ nonzero, the ray spanned by $X$ is
\begin{align*}
\R_+ X \coloneqq \set{\alpha X:\, \alpha\geq 0}.
\end{align*}
Let $\cS\subseteq\S^n_+$ be a closed convex cone and suppose $X\in\cS$ is nonzero.
We say that $\R_+X$ is an extreme ray of $\cS$ if for any $Y,Z\in \cS$ such that $X = (Y+Z)/2$, we must have $Y,Z\in\R_+X$.\qedhere
\end{definition}

\begin{fact}
\label{fact:psd_modification_directions}
Let $X\in \S^n_+$. Then, $x\in \range(X)$ if and only if there exists $\epsilon>0$ such that $X-\epsilon xx^\top \in \S^n_+$.
\end{fact}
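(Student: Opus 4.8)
The plan is to prove both implications directly from the fact that $\range(X) = (\ker X)^\perp$ for any $X\in\S^n_+$, together with the observation that the restriction of $X$ to $\range(X)$ is positive definite. Neither direction requires more than testing the quadratic form $v\mapsto v^\top(X-\epsilon xx^\top)v$ against suitable vectors $v$.

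For the ``only if'' direction, I would suppose $X-\epsilon xx^\top\succeq 0$ for some $\epsilon>0$ and let $v\in\ker(X)$ be arbitrary. Testing the form against $v$ and using $Xv=0$ gives $0\le v^\top(X-\epsilon xx^\top)v = -\epsilon(x^\top v)^2$, forcing $x^\top v = 0$. As $v$ ranges over all of $\ker(X)$, this shows $x\perp\ker(X)$, i.e.\ $x\in(\ker X)^\perp = \range(X)$.

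For the ``if'' direction, I would assume $x\in\range(X)$, dispatch the trivial case $x=0$, and reduce the problem to $\range(X)$. Writing any $v = v_R + v_K$ with $v_R\in\range(X)$ and $v_K\in\ker(X)$, both $v^\top X v = v_R^\top X v_R$ and $x^\top v = x^\top v_R$ hold, since $x$ and $v_R$ are orthogonal to $\ker(X)$. Hence $v^\top(X-\epsilon xx^\top)v = v_R^\top X v_R - \epsilon(x^\top v_R)^2$, and it suffices to make the right-hand side nonnegative over $v_R\in\range(X)$. Using that $X$ restricted to its range is positive definite, so $v_R^\top X v_R \ge \lambda\norm{v_R}^2$ with $\lambda>0$ the smallest nonzero eigenvalue of $X$, together with Cauchy--Schwarz $(x^\top v_R)^2 \le \norm{x}^2\norm{v_R}^2$, I would take any $\epsilon \le \lambda/\norm{x}^2$ to conclude $v^\top(X-\epsilon xx^\top)v \ge (\lambda-\epsilon\norm{x}^2)\norm{v_R}^2 \ge 0$.

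The argument is entirely elementary, so there is no real obstacle; the only point meriting care is the ``if'' direction, where $\epsilon$ must be chosen \emph{uniformly} over all test directions. This is exactly what the reduction to $\range(X)$ accomplishes: isolating the dependence on $v_R$ and invoking the uniform positive-definiteness of $X$ on its range (the constant $\lambda>0$) lets a single $\epsilon$ work simultaneously for every $v$.
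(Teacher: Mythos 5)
Your proof is correct. Note that the paper states this as a \cref{fact:psd_modification_directions}-style ``Fact'' with no proof at all (it is treated as standard background), so there is no in-paper argument to compare against; your write-up simply supplies the missing verification. Both directions are sound: the ``only if'' direction correctly tests the quadratic form against kernel vectors and uses $\range(X)=(\ker X)^\perp$ for symmetric $X$, and the ``if'' direction correctly handles the one subtle point---choosing $\epsilon$ uniformly over all test vectors---by splitting $v = v_R + v_K$, killing the cross terms, and invoking the bound $v_R^\top X v_R \geq \lambda \norm{v_R}^2$ with $\lambda$ the smallest nonzero eigenvalue. A marginally slicker alternative for the ``if'' direction: write $x = Xz$ and use the generalized Cauchy--Schwarz inequality $(v^\top X z)^2 \leq (v^\top X v)(z^\top X z)$ for PSD forms, which shows $\epsilon = 1/(z^\top X z)$ works (treating $z^\top Xz=0$, i.e.\ $x=0$, separately); this avoids any mention of eigenvalues, but your argument is equally elementary and complete.
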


\begin{fact}
\label{fact:extreme_ray_def}
Let $\cS\subseteq\S^n_+$ be a closed convex cone. Then, for $X\neq0$, $\R_+ X$ is an extreme ray of $\cS$ if and only if for every $Y$,
\begin{align*}
[X-Y,X+Y]\subseteq \cS \implies \exists\, \alpha\in\R\text{ such that } Y=\alpha X.
\end{align*}
\end{fact}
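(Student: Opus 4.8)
The plan is to verify the two implications by translating between the segment description $[X-Y, X+Y]\subseteq\cS$ and the midpoint decomposition $X=(A+B)/2$ that appears in the definition of an extreme ray. Throughout, the one genuinely substantive point is that being a \emph{real} multiple of $X$ must be upgraded to being a \emph{nonnegative} multiple, and this is exactly where the ambient PSD structure $\cS\subseteq\S^n_+$ is used.

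For the forward implication, assume $\R_+X$ is an extreme ray and fix any $Y$ with $[X-Y,X+Y]\subseteq\cS$. In particular the two endpoints $A=X-Y$ and $B=X+Y$ lie in $\cS$, and they satisfy $X=(A+B)/2$. The definition of an extreme ray then yields $A,B\in\R_+X$ directly; writing $A=\alpha X$ gives $Y=X-A=(1-\alpha)X$, which is a real multiple of $X$ as required. No appeal to the PSD cone is needed in this direction.

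For the reverse implication, assume the stated segment condition and take arbitrary $A,B\in\cS$ with $X=(A+B)/2$; the goal is to show $A,B\in\R_+X$. I would set $Y=(B-A)/2$, so that $X-Y=A$ and $X+Y=B$. Since $\cS$ is convex and contains both $A$ and $B$, the whole segment $[X-Y,X+Y]=[A,B]$ lies in $\cS$, and the hypothesis supplies some $\alpha\in\R$ with $Y=\alpha X$. Hence $A=(1-\alpha)X$ and $B=(1+\alpha)X$.

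The crux, and the step I expect to require the most care, is concluding from $A=(1-\alpha)X$ and $B=(1+\alpha)X$ that the coefficients are in fact nonnegative. Here I would invoke $\cS\subseteq\S^n_+$ together with $X\succeq 0$, $X\neq 0$: a strictly negative scalar multiple of a nonzero PSD matrix has a negative eigenvalue and therefore cannot be PSD. Thus $A\in\S^n_+$ forces $1-\alpha\geq 0$, and $B\in\S^n_+$ forces $1+\alpha\geq 0$, so both $A$ and $B$ lie in $\R_+X$, establishing that $\R_+X$ is an extreme ray. The boundary cases $\alpha=\pm 1$ give $A=0$ or $B=0$, which still lie in $\R_+X$ since $\R_+X$ contains the zero matrix, so they pose no difficulty.
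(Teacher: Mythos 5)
Your proof is correct. The paper states \cref{fact:extreme_ray_def} as a Fact without proof, so there is no argument of the paper to compare against; yours is the natural verification, translating between the segment condition $[X-Y,X+Y]\subseteq\cS$ and the midpoint decomposition in the paper's definition of an extreme ray, and you correctly isolate the one substantive step: upgrading $A=(1-\alpha)X$, $B=(1+\alpha)X$ from real to nonnegative multiples via the containment $\cS\subseteq\S^n_+$, since a negative multiple of a nonzero PSD matrix cannot be PSD. The only pedantic remark is that the statement (and your use of $X\succeq 0$ in the reverse direction) should be read with the implicit standing assumption $X\in\cS$ from the paper's definition of extreme ray; this costs nothing in your argument, as $X=(A+B)/2\in\cS$ by convexity whenever the decomposition you consider exists.
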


The following fact follows immediately from \cref{fact:psd_modification_directions,fact:extreme_ray_def}.
\begin{fact}
\label{fact:rank_one_implies_extreme}
Let $\cS\subseteq\S^n_+$ be a closed convex cone. If $X\in\cS$ has $\rank(X) = 1$, then $\R_+X$ is an extreme ray of $\cS$.
\end{fact}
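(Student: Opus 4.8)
The plan is to verify the characterization of extreme rays provided by \cref{fact:extreme_ray_def}. Write $X = xx^\top$ for some nonzero $x\in\R^n$, so that $\range(X) = \spann\set{x}$ is one-dimensional. By \cref{fact:extreme_ray_def}, it suffices to show that whenever $Y\in\S^n$ satisfies $[X-Y,\,X+Y]\subseteq\cS$, there exists a scalar $\alpha\in\R$ with $Y=\alpha X$. Since $\cS\subseteq\S^n_+$, the two endpoints of this segment already give $X-Y\succeq 0$ and $X+Y\succeq 0$, and these are the only hypotheses I will use.

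The heart of the argument is to show that $\range(Y)\subseteq\range(X)$, and here I would apply \cref{fact:psd_modification_directions} in both of its directions. Fix any $v\in\range(X-Y)$. Applying the forward direction of \cref{fact:psd_modification_directions} to $X-Y\succeq 0$ produces $\epsilon>0$ with $(X-Y)-\epsilon vv^\top\succeq 0$. Adding the PSD matrix $X+Y$ then yields $2X-\epsilon vv^\top\succeq 0$, i.e.\ $X-\tfrac{\epsilon}{2}vv^\top\succeq 0$, and the reverse direction of \cref{fact:psd_modification_directions} forces $v\in\range(X)$. This shows $\range(X-Y)\subseteq\range(X)$, and the symmetric argument with the roles of $X-Y$ and $X+Y$ exchanged gives $\range(X+Y)\subseteq\range(X)$.

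Finally, writing $Y=\tfrac{1}{2}\big((X+Y)-(X-Y)\big)$ shows $\range(Y)\subseteq\range(X)=\spann\set{x}$. A symmetric matrix whose range is contained in the one-dimensional subspace $\spann\set{x}$ must be a scalar multiple of $xx^\top$, so $Y=\alpha X$ for some $\alpha\in\R$, which is exactly the conclusion required by \cref{fact:extreme_ray_def}. I do not expect a genuine obstacle: the only points needing care are the chaining of the two directions of \cref{fact:psd_modification_directions} to transfer a range membership from $X-Y$ back to $X$, and the elementary observation that a symmetric matrix with one-dimensional range is a scalar multiple of the rank-one matrix spanning that range.
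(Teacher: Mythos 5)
Your proof is correct and is essentially the paper's intended argument: the paper presents this fact as following immediately from \cref{fact:psd_modification_directions,fact:extreme_ray_def}, and your write-up is precisely the natural elaboration of that derivation, using the characterization in \cref{fact:extreme_ray_def} and both directions of \cref{fact:psd_modification_directions} to push range membership from $X\pm Y$ back into $\range(X)=\spann\set{x}$. No gaps to report.
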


\begin{lemma}
\label{lem:rog_iff_extreme_rank_one}
Let $\cS\subseteq\S^n_+$ be a closed convex cone. Then, $\cS$ is ROG if and only if for each extreme ray $\R_+ X$ of $\cS$ we have $\rank(X)=1$.
\end{lemma}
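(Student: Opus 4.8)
The plan is to prove both implications directly from \cref{def:ROG} and the preceding facts, using Carath\'eodory's theorem for one direction and Minkowski's representation theorem for pointed closed convex cones for the other. A useful preliminary observation is that $\cS$ is \emph{pointed}: since $\cS\subseteq\S^n_+$ and $\S^n_+\cap(-\S^n_+)=\set{0}$, we get $\cS\cap(-\cS)=\set{0}$. Writing $A\coloneqq\cS\cap\set{xx^\top:\,x\in\R^n}$ for the set of rank-one (and zero) matrices in $\cS$, note that $A$ is invariant under nonnegative scaling (if $xx^\top\in\cS$ then so is $(\alpha x)(\alpha x)^\top$), so $\conv(A)=\cone(A)$; combined with the remark following \cref{def:ROG}, all the relevant hulls coincide and are closed: $\conv(A)=\clconv(A)=\clcone(A)$.

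For the forward implication, suppose $\cS$ is ROG and let $\R_+X$ be an extreme ray with $X\neq 0$. Since $X\in\cS=\conv(A)$, Carath\'eodory's theorem lets me write $X=\sum_i \lambda_i\, x_ix_i^\top$ as a finite sum with $\lambda_i>0$ and each $x_ix_i^\top\in\cS$. Each summand $\lambda_i x_ix_i^\top$ lies in $\cS$, so grouping $X=(\lambda_1 x_1x_1^\top)+\sum_{i\geq 2}\lambda_i x_ix_i^\top$ exhibits $X$ as a sum of two elements of $\cS$; applying the extreme-ray characterization in \cref{fact:extreme_ray_def} iteratively forces each $\lambda_i x_ix_i^\top\in\R_+X$. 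Because $X\neq 0$, at least one $x_i$ is nonzero, and for that index $X$ is a positive multiple of $x_ix_i^\top$, whence $\rank(X)=1$.

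For the reverse implication, suppose every extreme ray of $\cS$ is generated by a rank-one matrix. As $\cS$ is a closed pointed convex cone in the finite-dimensional space $\S^n$, Minkowski's theorem (the conic analogue of Krein--Milman in finite dimensions) gives $\cS=\clcone\bigl(\bigcup\extr(\cS)\bigr)$. By hypothesis each extreme ray has the form $\R_+(xx^\top)$ with $xx^\top\in\cS$, hence the whole ray is contained in $A$; therefore $\bigcup\extr(\cS)\subseteq A$, and
\begin{align*}
\cS=\clcone\bigl(\textstyle\bigcup\extr(\cS)\bigr)\subseteq\clcone(A)=\conv(A)\subseteq\cS,
\end{align*}
where the middle equality is the preliminary observation and the final inclusion holds since $A\subseteq\cS$ and $\cS$ is convex. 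Thus $\cS=\conv(A)$, i.e.\ $\cS$ is ROG.

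The individual steps are short, and the only substantive external input is Minkowski's representation theorem for pointed closed convex cones. I expect the one point requiring care to be the reduction of the finite decomposition $X=\sum_i Z_i$ (with $Z_i\in\cS$) to repeated applications of the pairwise splitting $X=(Y+Z)/2$ used in \cref{fact:extreme_ray_def}; beyond that, the main thing to get right is that all hulls of the scale-invariant set $A$ agree and are automatically closed, which is supplied by the preliminary observation together with the stated remark, so no separate closure argument is needed.
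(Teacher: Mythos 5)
Your proof is correct and takes essentially the same approach as the paper: the forward direction decomposes an extreme-ray generator into rank-one summands lying in $\cS$ and uses extremality to force each summand into $\R_+X$, while the reverse direction combines pointedness of $\cS\subseteq\S^n_+$ with the Minkowski-type representation of a closed pointed convex cone by its extreme rays. The only cosmetic differences are your (unneeded) appeal to Carath\'eodory and your handling of closure via the remark following \cref{def:ROG}, where the paper simply invokes that a pointed closed convex cone equals the convex hull of its extreme rays.
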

\begin{proof}
$(\Leftarrow)$ Note that as $\cS$ is a subset of $\S^n_+$, it must be pointed. Then, as a closed convex pointed cone is the convex hull of its extreme rays, we have that $\cS= \conv(\cS \cap \set{xx^\top:\, x \in\R^n})$.

$(\Rightarrow)$ Let $\R_+X$ denote an extreme ray of $\cS$. As $\cS$ is ROG, we may by assumption write $X = \sum_{i=1}^k x_ix_i^\top$ where $x_ix_i^\top\in\cS$ for every $i\in[k]$. Then, as $\R_+X$ is an extreme ray of $\cS$, we must have $x_ix_i^\top\in \R_+X$ for every $i\in[k]$. Thus, we deduce that $X$ is rank-one.\qedhere

\end{proof}

The following fact allows us to decompose positive semidefinite matrices which
are identically zero on a given subspace.
\begin{lemma}
  \label{lem:block_decomp}
  Let $X\in\S^n_+$. Suppose $W\subseteq\R^n$ is a subspace on which $X_W = 0$. Then,
we can write $X = 0_W \oplus X_{W^\perp}$.
\end{lemma}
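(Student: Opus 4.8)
The plan is to reduce the claim to the single observation that $W\subseteq\ker(X)$; once this is established, the vanishing of the cross terms between $W$ and $W^\perp$ is immediate and yields the desired direct-sum decomposition.

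First I would record that the hypothesis $X_W=0$ gives $x^\top X x = 0$ for every $x\in W$: under the identification of $W$ with $\R^{\dim(W)}$ we have $x^\top X x = x^\top X_W x = 0$. The key step is to upgrade this from the quadratic form to the matrix action, i.e. to show $Xx = 0$ for all $x\in W$, so that $W\subseteq\ker(X)$. This is exactly where positive semidefiniteness is used. Writing a Gram factorization $X = B^\top B$ of the PSD matrix $X$, we have $x^\top X x = \norm{Bx}^2$, so $x^\top X x = 0$ forces $Bx = 0$ and hence $Xx = B^\top B x = 0$. (Equivalently, one may invoke the generalized Cauchy--Schwarz inequality $\ip{Xx,y}^2 \le (x^\top X x)(y^\top X y)$, valid for PSD $X$, to deduce $x^\top X x = 0 \implies Xx = 0$.)

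Finally I would assemble the block structure relative to the orthogonal decomposition $\R^n = W\oplus W^\perp$. For $x\in W$ and $y\in W^\perp$ the off-diagonal entry is $\ip{Xx,y} = 0$, since $Xx = 0$ by the previous step; together with $X_W = 0$ this shows that both the $W$-block and the cross blocks of $X$ vanish. Consequently, for every $x\in W$ and $y\in W^\perp$,
\begin{align*}
(x\oplus y)^\top X (x\oplus y) = x^\top X_W x + 2\ip{Xx,y} + y^\top X_{W^\perp} y = y^\top X_{W^\perp} y,
\end{align*}
and since a symmetric matrix is determined by its quadratic form, this is precisely the defining identity of $0_W\oplus X_{W^\perp}$. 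Hence $X = 0_W\oplus X_{W^\perp}$.

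The only genuine obstacle is the passage from $x^\top X x = 0$ to $Xx = 0$, and this is precisely where the PSD hypothesis is indispensable: for an indefinite symmetric matrix the conclusion can fail (a nonzero cross block may coexist with a vanishing diagonal block). Everything else is routine linear-algebra bookkeeping.
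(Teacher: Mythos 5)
Your proof is correct and takes essentially the same route as the paper: both arguments reduce to showing that positive semidefiniteness together with the vanishing diagonal block $X_W=0$ forces the cross block between $W$ and $W^\perp$ to vanish. In fact, your Gram-factorization step (deducing $W\subseteq\ker(X)$ from $x^\top Xx=0$) supplies explicitly the justification that the paper's proof leaves implicit when it asserts $Y=0$ in its block decomposition.
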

\begin{proof}
By performing an orthonormal change of variables, we may assume without loss of
generality that $W$
corresponds to the first $k$ coordinates of $\R^n$ and $W^\perp$ corresponds to
the last $n-k$ coordinates of $\R^n$. We can then write $X$ as a block matrix
\begin{align*}
  X = \begin{pmatrix}
    X_W & Y\\
    Y^\top & X_{W^\perp}
  \end{pmatrix}.
\end{align*}
Then, as $X\in\S^n_+$ and $X_W = 0$, we deduce that $Y = 0$. In particular, $X =
0_W \oplus X_{W^\perp}$.\qedhere
\end{proof}

\subsection{Relating LMIs to LMEs}\label{sub:relating_SM_and_TM}
Given a set $\cM\subseteq \S^n$, we will quickly switch from studying $\cS(\cM)$ to sets defined by LMEs, i.e., sets of the form
\begin{align*}
\cT(\cM) \coloneqq \set{X\in\S^n_+:\, \ip{M,X} = 0,\,\forall M\in\cM}.
\end{align*}
Sets of the form $\cT(\cM)$ are simpler to analyze than sets of the form $\cS(\cM)$.

\begin{remark}\label{rem:TM_finite_SM_infinite}
It is clear that given any $\cM\subseteq\S^n$, we have
$\cS(\cM) = \cS(\clcone(\cM))$ and $\cT(\cM) = \cT(\spann(\cM))$. In particular, we may without loss of generality assume that $\cM$ is finite when analyzing sets of the form $\cT(\cM)$---simply replace $\cM$ with a finite basis of $\spann(\cM)$. On the other hand, $\clcone(\cM)$  is not necessarily finitely generated.\qedhere
\end{remark}

We now present a series of lemmas relating $\cS(\cM)$ and $\cT(\cM)$ and their facial structures in terms of the ROG property. These results are particularly instrumental when we analyze the spectrahedral sets defined by finitely many LMIs/LMEs. 

\begin{lemma}
\label{lem:rog_iff_faces_rog}
For any set $\cM\subseteq \S^n$, the following are equivalent:
\begin{enumerate}
	\item $\cS(\cM)$ is ROG.
	\item Every face of $\cS(\cM)$ is ROG.
	\item $\cS(\cM)\cap \cT(\cM')$ is ROG for every $\cM'\subseteq\cM$.
\end{enumerate}
\end{lemma}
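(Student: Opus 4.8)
The plan is to prove the three statements equivalent by showing $(2)\Rightarrow(1)$, $(1)\Rightarrow(3)$, and $(3)\Rightarrow(2)$, exploiting the fact that both conditions (1)--(3) concern the ROG property, which by \cref{lem:rog_iff_extreme_rank_one} is entirely determined by the ranks of matrices spanning the extreme rays. The overarching principle I would lean on is that $\cS(\cM)$ is a closed convex cone whose faces are themselves closed convex subcones of $\S^n_+$, so each face is ROG if and only if all of \emph{its} extreme rays are rank-one; and crucially, every extreme ray of a face of $\cS(\cM)$ is also an extreme ray of $\cS(\cM)$ itself, while conversely each extreme ray of $\cS(\cM)$ generates a face.

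For $(2)\Rightarrow(1)$, I would observe that $\cS(\cM)$ is a face of itself, so this implication is immediate. For $(1)\Rightarrow(3)$, the key structural fact is that each set $\cS(\cM)\cap\cT(\cM')$ is exactly the face of $\cS(\cM)$ consisting of those $X$ for which the inequalities indexed by $\cM'$ are tight, i.e.\ $\ip{M,X}=0$ for all $M\in\cM'$. I would argue that this is a face: if $X=(Y+Z)/2$ with $Y,Z\in\cS(\cM)$ and $X$ satisfies $\ip{M,X}=0$ for $M\in\cM'$, then since $\ip{M,Y},\ip{M,Z}\ge 0$ and they average to zero, both must vanish, so $Y,Z\in\cS(\cM)\cap\cT(\cM')$. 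Every face of $\cS(\cM)$ arises this way (tightening some subset of the defining inequalities), so assuming $(1)$, I would use \cref{lem:rog_iff_extreme_rank_one} to say that every extreme ray of $\cS(\cM)$ is rank-one; since extreme rays of the face $\cS(\cM)\cap\cT(\cM')$ are also extreme rays of $\cS(\cM)$, they too are rank-one, giving $(3)$ again via \cref{lem:rog_iff_extreme_rank_one}.

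For $(3)\Rightarrow(2)$, I would take an arbitrary face $F$ of $\cS(\cM)$ and argue it has the form $\cS(\cM)\cap\cT(\cM')$ for a suitable $\cM'\subseteq\cM$, namely the subset of constraints active on all of $F$; then $(3)$ directly yields that $F$ is ROG. The identification of a general face with an active-constraint slice is the one point needing care: I would show that for any face $F$ there is a relative-interior point $X^\ast$, let $\cM'=\set{M\in\cM:\ip{M,X^\ast}=0}$, and verify $F=\cS(\cM)\cap\cT(\cM')$ by a standard argument that any $X\in\cS(\cM)$ with the same active constraints as $X^\ast$ lies in $F$ (perturbing along the segment through $X^\ast$ stays in $\cS(\cM)$).

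The main obstacle I anticipate is precisely this last face-characterization step: cleanly establishing that every face of $\cS(\cM)$ is cut out by activating a subset of the defining inequalities, which requires a relative-interior / exposed-face argument and some attention to the possibly infinite index set $\cM$. The equivalence of extreme rays of a face with extreme rays of the ambient cone, and the translation back and forth through \cref{lem:rog_iff_extreme_rank_one}, should be routine once that structural fact is in hand.
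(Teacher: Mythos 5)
Your implications $(2)\Rightarrow(1)$ and $(1)\Rightarrow(3)$ are sound. In particular, your midpoint argument that $\cS(\cM)\cap\cT(\cM')$ is a face of $\cS(\cM)$ is correct for arbitrary (even infinite) $\cM'$, and is arguably cleaner than the paper's route, which exhibits the same set as an exposed face $\cS(\cM)\cap Y^\perp$ by aggregating a finite basis of $\spann(\cM')$ into a single dual vector $Y$. The genuine gap is in $(3)\Rightarrow(2)$: the structural claim that every face of $\cS(\cM)$ has the form $\cS(\cM)\cap\cT(\cM')$ for some $\cM'\subseteq\cM$ is false. Faces of $\cS(\cM)$ are carved out not only by activating constraints from $\cM$ but also by the facial structure of $\S^n_+$ itself. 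The simplest counterexample is $\cM=\emptyset$: then $\cS(\cM)=\S^n_+$ and the only set of the form $\cS(\cM)\cap\cT(\cM')$ is $\S^n_+$ itself, yet $\S^n_+$ has many proper faces, e.g.\ $\R_+ e_1e_1^\top$, or more generally $\set{X\in\S^n_+:\,\range(X)\subseteq W}$ for a subspace $W$. Your proposed repair---take a relative interior point $X^\ast$ of a face $F$ and set $\cM'=\set{M\in\cM:\,\ip{M,X^\ast}=0}$---only yields the inclusion $F\subseteq\cS(\cM)\cap\cT(\cM')$; the reverse inclusion fails precisely because a matrix in $\cS(\cM)$ can satisfy the same active $\cM$-constraints as $X^\ast$ while having range not contained in $\range(X^\ast)$, hence lying outside $F$. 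In the example above: $X^\ast=e_1e_1^\top$, $\cM'=\emptyset$, and $\cS(\cM)\cap\cT(\cM')=\S^n_+\supsetneq F$.

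The fix is short, and you already have every needed ingredient: $(3)\Rightarrow(1)$ is immediate by taking $\cM'=\emptyset$ (this is exactly how the paper closes its cycle), and $(1)\Rightarrow(2)$ follows from the fact you state at the outset---every extreme ray of a face of $\cS(\cM)$ is an extreme ray of $\cS(\cM)$---combined with \cref{lem:rog_iff_extreme_rank_one}. Note that this implication requires no description whatsoever of what the faces of $\cS(\cM)$ look like; it uses only the containment of extreme rays. So the correct cycle is $(1)\Rightarrow(2)$, then $(2)\Rightarrow(3)$ (or your direct $(1)\Rightarrow(3)$, since showing that the constraint-slices \emph{are} faces is all that is needed there), then $(3)\Rightarrow(1)$, avoiding any attempt to classify the faces of $\cS(\cM)$.
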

\begin{proof}
$(1. \Rightarrow 2.)$ 
Note that every extreme ray of a face of $\cS(\cM)$ is also an extreme ray of $\cS(\cM)$.

$(2. \Rightarrow 3.)$
First, suppose $\cM'=\emptyset$. Then, $\cT(\cM')=\S^n_+$ and thus $\cS(\cM)\cap \cT(\cM')=\cS(\cM)$. Since $\cS(\cM)$ is a face of itself, by part 2.\@ we deduce it is ROG.  Now consider any $\emptyset\neq\cM'\subseteq \cM$.
Note that $\cT(\cM')$ only depends on the linear span of $\cM'$, thus without loss of generality we may assume that $\cM'$ is a basis of $\spann(\cM')$. Take $Y$ to be the average of $\cM'$, i.e., $Y=\frac{1}{\abs{\cM'}}\sum_{M\in\cM'} M$. Note that $Y\in\cone(\cM')$ so that $Y\in\cS(\cM)^*$. We claim that $\cS(\cM)\cap\cT(\cM') = \cS(\cM) \cap Y^\perp$. Indeed, for all $X\in\cS(\cM)$, we have that $\ip{Y,X} = 0$ if and only if $\ip{Y,M} = 0$ for all $M\in\cM'$ if and only if $X\in\cT(\cM')$.
We deduce that $\cS(\cM)\cap\cT(\cM')$ is a face of $\cS(\cM)$, and thus it is ROG.

$(3.\Rightarrow 1.)$
Take $\cM' = \emptyset$.\qedhere
\end{proof}

We have the following immediate corollary of \cref{lem:rog_iff_faces_rog}.
\begin{corollary}
\label{cor:SM_rog_then_TM_rog}
For any set $\cM\subseteq \S^n$, if $\cS(\cM)$ is ROG then $\cT(\cM)$ is ROG.
\end{corollary}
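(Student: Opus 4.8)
The plan is to recognize $\cT(\cM)$ as one of the sets already covered by part 3 of \cref{lem:rog_iff_faces_rog}. The crucial observation is that the equality constraints defining $\cT(\cM)$ subsume the inequality constraints defining $\cS(\cM)$: if $\ip{M,X} = 0$ for every $M\in\cM$, then in particular $\ip{M,X}\geq 0$ for every $M\in\cM$. Hence $\cT(\cM)\subseteq\cS(\cM)$, and consequently the set-theoretic identity $\cT(\cM) = \cS(\cM)\cap\cT(\cM)$ holds.

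With this identity in hand, I would simply instantiate part 3 of \cref{lem:rog_iff_faces_rog} at the particular choice $\cM' = \cM$ (which is a legitimate subset of $\cM$). Since $\cS(\cM)$ is assumed ROG, parts 1 and 3 of the lemma are equivalent, so $\cS(\cM)\cap\cT(\cM')$ is ROG for every $\cM'\subseteq\cM$. Taking $\cM' = \cM$ and using the identity above yields that $\cS(\cM)\cap\cT(\cM) = \cT(\cM)$ is ROG, which is exactly the claim.

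There is essentially no obstacle beyond spotting the correct instance of the lemma to invoke; the entire content is the containment $\cT(\cM)\subseteq\cS(\cM)$ together with the already-established equivalence of parts 1 and 3. In particular, no new convexity, facial, or rank arguments are needed, which is why this statement is recorded as an immediate corollary.
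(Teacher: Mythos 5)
Your proof is correct and is exactly the paper's argument: the paper's proof is the one-liner ``Take $\cM'=\cM$ in \cref{lem:rog_iff_faces_rog},'' and your writeup just makes explicit the (implicit) containment $\cT(\cM)\subseteq\cS(\cM)$ that turns $\cS(\cM)\cap\cT(\cM)$ into $\cT(\cM)$. Nothing is missing; you simply spelled out the step the authors left to the reader.
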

\begin{proof}
Take $\cM'=\cM$ in \cref{lem:rog_iff_faces_rog}.\qedhere
\end{proof}

Informally, an extreme ray of $\cS(\cM)$ should also be an extreme ray of $\cS(\cM')$ for $\cM'\subseteq\cM$ as long as $\cM'$ contains the ``relevant'' inequalities in $\cM$. The following technical lemma makes this notion precise.
\begin{lemma}
\label{lem:compactness_extreme_for_tight}
Let $\cM\subseteq\S^n$ and let $\R_+X$ be an extreme ray of $\cS(\cM)$. Let $\cM'\subseteq\cM$ contain all of the constraints that are tight at $X$, i.e., $\set{M\in\cM:\, \ip{M,X} = 0}\subseteq\cM'$. If $\cM\setminus \cM'$ is compact, then $\R_+X$ is an extreme ray of $\cS(\cM')$. If additionally $\cM' = \set{M\in\cM:\, \ip{M,X} = 0}$, then $\R_+X$ is an extreme ray of $\cT(\cM')$.
\end{lemma}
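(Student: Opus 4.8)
The plan is to prove the statement directly from the definition of extreme ray (\cref{fact:extreme_ray_def}), using the compactness of $\cM \setminus \cM'$ to rule out ``interference'' from the non-tight constraints. First I would set up the contrapositive-style argument: since $\R_+X$ is an extreme ray of $\cS(\cM)$ and $\cM' \subseteq \cM$, clearly $X \in \cS(\cM')$ (dropping constraints preserves feasibility). So it suffices to show that $X$ generates an extreme ray of the \emph{larger} cone $\cS(\cM')$. Take any $Y \in \S^n$ with $[X-Y, X+Y] \subseteq \cS(\cM')$; my goal is to produce $\alpha \in \R$ with $Y = \alpha X$. The natural strategy is to show that for sufficiently small $t > 0$, the segment $[X - tY, X + tY]$ actually lies in the \emph{full} cone $\cS(\cM)$, at which point extremality of $\R_+X$ in $\cS(\cM)$ forces $Y = \alpha X$ (a scaled segment is still a segment through $X$ in direction $Y$).

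To carry this out I would verify the three defining conditions of $\cS(\cM)$ for the points $X \pm tY$. The PSD condition and the constraints in $\cM'$ are inherited: $X \pm Y \in \cS(\cM') \subseteq \S^n_+$, so $X \pm tY = (1-t)X + t(X \pm Y)$ is a convex combination of PSD matrices and hence PSD for $t \in [0,1]$, and similarly $\ip{M, X \pm tY} \geq 0$ for all $M \in \cM'$. The only constraints that might fail are those $M \in \cM \setminus \cM'$. For each such $M$, the hypothesis that $\cM'$ contains every tight constraint gives $\ip{M, X} > 0$ strictly. Here is where compactness enters: the function $M \mapsto \ip{M, X}$ is continuous and positive on the compact set $\cM \setminus \cM'$, so it attains a positive minimum $\delta \coloneqq \min_{M \in \cM \setminus \cM'} \ip{M,X} > 0$. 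Meanwhile $M \mapsto \ip{M, Y}$ is bounded on this compact set, say $\abs{\ip{M,Y}} \leq B$ for all $M \in \cM \setminus \cM'$. Then for $t \leq \delta / B$ we get $\ip{M, X \pm tY} = \ip{M,X} \pm t\ip{M,Y} \geq \delta - tB \geq 0$, so all constraints in $\cM \setminus \cM'$ hold as well.

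Choosing $t = \min\set{1, \delta/B}$ (handling the edge case $B = 0$ by taking $t = 1$) thus places $[X - tY, X+tY] \subseteq \cS(\cM)$. Since $\R_+X$ is extreme in $\cS(\cM)$, \cref{fact:extreme_ray_def} yields $tY = \beta X$ for some $\beta$, hence $Y = (\beta/t) X = \alpha X$, proving $\R_+X$ is extreme in $\cS(\cM')$. For the final sentence, when $\cM' = \set{M \in \cM : \ip{M,X} = 0}$ consists exactly of tight constraints, every $M \in \cM'$ satisfies $\ip{M,X} = 0$, so $X \in \cT(\cM')$; moreover $\cT(\cM') = \cS(\cM') \cap \cS(-\cM')$, and since $X$ is tight on all of $\cM'$ the same segment-scaling argument (now the constraints $\ip{-M, \cdot} \geq 0$ are also tight at $X$, hence satisfied with equality along any feasible perturbation) shows $\R_+X$ remains extreme in $\cT(\cM')$.

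The main obstacle is purely the compactness step: without it, the infimum $\inf_{M \in \cM \setminus \cM'} \ip{M,X}$ could be $0$ even though each individual value is strictly positive, or $\sup_{M} \abs{\ip{M,Y}}$ could be infinite, in either case destroying the uniform choice of $t$ that makes the whole segment feasible. I expect the rest of the argument to be routine linear algebra and convexity. One subtlety to handle carefully is that extremality is stated for the direction $Y$ rather than a scaled version, so I must be explicit that rescaling the perturbation by $t$ does not change the conclusion $Y = \alpha X$; this is immediate but worth stating. A second point worth noting is that the last claim genuinely requires the \emph{exact} equality $\cM' = \set{M : \ip{M,X}=0}$ rather than mere containment, since passing from $\cS(\cM')$ to $\cT(\cM')$ doubles each inequality into an equality, and extremality in the doubled system is only guaranteed when $X$ sits on the boundary of both halves.
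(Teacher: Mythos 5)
Your proposal is correct and follows essentially the same route as the paper's proof: use compactness of $\cM\setminus\cM'$ to get a positive lower bound on $\ip{M,X}$ and an upper bound on $\abs{\ip{M,Y}}$, shrink the perturbation so the segment $[X-tY,X+tY]$ lands in $\cS(\cM)$, and then invoke extremality of $\R_+X$ in $\cS(\cM)$; the second statement follows by the same argument with $\cT(\cM')$ in place of $\cS(\cM')$. Your write-up is in fact slightly more explicit than the paper's on the routine points (the convex-combination argument for the PSD and $\cM'$ constraints, the $B=0$ edge case, and the harmlessness of the rescaling), but these are details the paper leaves implicit rather than genuine differences in approach.
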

\begin{proof}
Suppose $Y\in\S^n$ is such that $[X-Y,X+Y]\subseteq\cS(\cM')$. By compactness of $\cM\setminus \cM'$, we have that $\ip{M,X}$ achieves a positive minimum value on $\cM\setminus\cM'$. Furthermore, by compactness, $\ip{M,Y}$ is bounded on $\cM\setminus\cM'$. In particular, there exists $\epsilon>0$ small enough guaranteeing that $\ip{M,X\pm\epsilon Y}>0$ for all $M\in\cM\setminus\cM'$. This together with $[X-Y,X+Y]\subseteq\cS(\cM')$ implies that  $[X-\epsilon Y,X+\epsilon Y]\subseteq\cS(\cM)$. Thus, as $\R_+X$ is an extreme ray of $\cS(\cM)$ we conclude that $Y=\alpha X$ for some $\alpha\in\R$. This then implies that $\R_+X$ is extreme in $\cS(\cM')$.

The second statement follows by replacing $\cS(\cM')$ with $\cT(\cM')$ in the argument above.\qedhere
\end{proof}

\cref{lem:compactness_extreme_for_tight} allows us to strengthen \cref{lem:rog_iff_faces_rog} in a few ways.

\begin{lemma}
\label{lem:rog_iff_nontrivial_faces_rog}
Let $\cM\subseteq \S^n$ be compact. Then, $\cS(\cM)$ is ROG if and only if $\cS(\cM)\cap\cT(\cM')$ is ROG for every $\emptyset\neq \cM'\subseteq\cM$.
\end{lemma}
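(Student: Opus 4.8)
The plan is to prove the two implications separately, with the forward direction essentially free from the preceding results and the reverse direction carrying the genuinely new content that compactness provides over \cref{lem:rog_iff_faces_rog}. For $(\Rightarrow)$, I would simply invoke \cref{lem:rog_iff_faces_rog}: if $\cS(\cM)$ is ROG, then $\cS(\cM)\cap\cT(\cM')$ is ROG for \emph{every} $\cM'\subseteq\cM$, and in particular for every nonempty $\cM'$. This step requires no compactness.

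For $(\Leftarrow)$, by \cref{lem:rog_iff_extreme_rank_one} it suffices to show that every extreme ray $\R_+X$ of $\cS(\cM)$ satisfies $\rank(X)=1$. I would fix such an $X$ and set $\cM'\coloneqq\set{M\in\cM:\, \ip{M,X}=0}$, the collection of constraints tight at $X$, and then split into two cases according to whether $\cM'$ is empty. If $\cM'\neq\emptyset$, then $X\in\cS(\cM)\cap\cT(\cM')$, and since $\cS(\cM)\cap\cT(\cM')$ is a closed convex subcone of $\cS(\cM)$, extremality of $\R_+X$ is inherited: any representation $X=(Y+Z)/2$ with $Y,Z\in\cS(\cM)\cap\cT(\cM')$ forces $Y,Z\in\cS(\cM)$ and hence $Y,Z\in\R_+X$. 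As $\cM'$ is nonempty, the hypothesis guarantees $\cS(\cM)\cap\cT(\cM')$ is ROG, so \cref{lem:rog_iff_extreme_rank_one} applied to this cone yields $\rank(X)=1$. This case uses no compactness.

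The remaining case $\cM'=\emptyset$ (that is, $\ip{M,X}>0$ for all $M\in\cM$) is exactly the situation \cref{lem:rog_iff_faces_rog} covered by permitting $\cM'=\emptyset$, and it is the step I expect to be the crux here since that option is now forbidden. My plan is to close the gap using compactness through \cref{lem:compactness_extreme_for_tight}, applied with its subset taken to be $\emptyset$: the empty set vacuously contains every tight constraint, its complement $\cM\setminus\emptyset=\cM$ is compact by assumption, and it coincides exactly with the tight set, so the ``additionally'' clause of that lemma certifies that $\R_+X$ is an extreme ray of $\cT(\emptyset)=\S^n_+$. Since the extreme rays of $\S^n_+$ are precisely the rank-one matrices (equivalently, $\S^n_+$ is ROG, so \cref{lem:rog_iff_extreme_rank_one} applies), we again obtain $\rank(X)=1$. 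Combining the two cases shows every extreme ray of $\cS(\cM)$ has rank one, whence $\cS(\cM)$ is ROG by \cref{lem:rog_iff_extreme_rank_one}. The one place to be careful, and the main obstacle, is confirming that compactness of $\cM$ is exactly what lets slack constraints be discarded so that an ``interior'' extreme ray of $\cS(\cM)$ is already extreme in the full PSD cone; everything else is extremality inheritance plus the ROG hypothesis on nonempty equality faces.
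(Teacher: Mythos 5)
Your proposal is correct and follows essentially the same route as the paper's proof: the forward direction by \cref{lem:rog_iff_faces_rog}, and the reverse direction by fixing an extreme ray $\R_+X$, splitting on whether the tight set $\cM'=\set{M\in\cM:\,\ip{M,X}=0}$ is empty, using extremality inheritance plus the ROG hypothesis when $\cM'\neq\emptyset$, and invoking \cref{lem:compactness_extreme_for_tight} with compactness of $\cM$ to conclude $\R_+X$ is extreme in $\S^n_+$ when $\cM'=\emptyset$. Your write-up merely makes explicit two steps the paper leaves implicit (the inheritance of extremality to the subcone and the vacuous application of \cref{lem:compactness_extreme_for_tight} with the empty subset), both of which are valid.
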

\begin{proof}
$(\Rightarrow)$ This direction follows \cref{lem:rog_iff_faces_rog}.

$(\Leftarrow)$ Let $\R_+ X$ be an extreme ray of $\cS(\cM)$ and define $\cM' \coloneqq \set{M\in\cM:\, \ip{M,X} = 0}$.
First suppose $\cM'\neq \emptyset$. As $\R_+ X$ is also an extreme ray of $\cS(\cM)\cap\cT(\cM')$, which by assumption is ROG, we have that $\rank(X) = 1$. Now suppose $\cM' = \emptyset$. By \cref{lem:compactness_extreme_for_tight} and the assumption that $\cM$ is compact, we deduce that $\R_+X$ is an extreme ray of $\cT(\emptyset) = \S^n_+$. We conclude that $\rank(X) = 1$.\qedhere
\end{proof}

We note that given \cref{lem:rog_iff_nontrivial_faces_rog}, it may be tempting to try to strengthen the third condition in \cref{lem:rog_iff_faces_rog} to the condition that $\cS(\cM)\cap \cT(\cM')$ is ROG for every $\emptyset\neq\cM'\subseteq\cM$. The following example shows that this is not possible without making the compactness assumption of \cref{lem:rog_iff_nontrivial_faces_rog}.
\begin{example}
Suppose $n = 2$ and $\cM =\bigcup_{i\in[4]} \cM_i$, where
\begin{equation*}
\begin{aligned}[c]
&\cM_1 = \set{\begin{pmatrix}
	1 &\\ & -1+\epsilon
\end{pmatrix}:\, \epsilon>0},
&&\cM_2 =
\set{\begin{pmatrix}
	-1 &\\ & 1+\epsilon
\end{pmatrix}:\, \epsilon>0},\\
&\cM_3 = \set{\begin{pmatrix}
	0 & 1\\ 1& \epsilon
\end{pmatrix}:\, \epsilon>0},
&&\cM_4 =
\set{\begin{pmatrix}
	0 & -1\\ -1& \epsilon
\end{pmatrix}:\, \epsilon>0}.
\end{aligned}
\end{equation*}

Noting that $\cS(\cM)$ is unchanged upon taking the closure of $\cM$ and that for all $i\in[4]$ and the constraints $\ip{M_{\epsilon},X}\geq0$ for $M_{\epsilon}\in\cM_i$ get only more restrictive as $\epsilon\to 0$, we deduce
\begin{align*}
\cS(\cM)= \cS\left(\set{\begin{pmatrix}
	1 & \\ & -1
\end{pmatrix}, \begin{pmatrix}
	-1 & \\ & 1
\end{pmatrix}, \begin{pmatrix}
	0 & 1\\ 1& 0
\end{pmatrix}, \begin{pmatrix}
	0 & -1 \\ -1 & 0
\end{pmatrix}}\right) = \R_+ I.
\end{align*}
We conclude $\cS(\cM) = \R_+I$ is not ROG. 
On the other hand, for any $\emptyset\neq \cM'\subseteq\cM$, we have $\cS(\cM)\cap\cT(\cM') = \set{0}$ (because $\ip{M, I}\neq 0$ for any $M\in\cM$) and is ROG.\qedhere
\end{example}

\begin{lemma}
\label{lem:TMprime_rog_then_SM_rog}
Let $\cM\subseteq \S^n$ be finite.
If $\cT(\cM')$ is ROG for every $\cM'\subseteq \cM$, then $\cS(\cM)$ is ROG.
\end{lemma}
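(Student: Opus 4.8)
The plan is to reduce to a statement about extreme rays via \cref{lem:rog_iff_extreme_rank_one}, which tells us that $\cS(\cM)$ is ROG if and only if every extreme ray $\R_+X$ of $\cS(\cM)$ satisfies $\rank(X)=1$. So it suffices to fix an arbitrary extreme ray $\R_+X$ of $\cS(\cM)$ and argue that $X$ has rank one.

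The key idea is to pass from $\cS(\cM)$ to one of the equality-defined cones $\cT(\cM')$, for which the ROG property is available by hypothesis. To do this, I would define $\cM' \coloneqq \set{M\in\cM:\, \ip{M,X} = 0}$ to be exactly the set of constraints tight at $X$. Since $\cM$ is finite, the complement $\cM\setminus\cM'$ is finite and hence compact, so the hypotheses of \cref{lem:compactness_extreme_for_tight} are met. Because $\cM'$ equals the set of tight constraints (not merely containing it), the second conclusion of \cref{lem:compactness_extreme_for_tight} applies and yields that $\R_+X$ is an extreme ray of $\cT(\cM')$.

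Now $\cM'\subseteq\cM$, so by assumption $\cT(\cM')$ is ROG. Applying the forward direction of \cref{lem:rog_iff_extreme_rank_one} to the ROG cone $\cT(\cM')$, every extreme ray of $\cT(\cM')$ is generated by a rank-one matrix; in particular $\rank(X)=1$. Since $\R_+X$ was an arbitrary extreme ray of $\cS(\cM)$, we conclude via the reverse direction of \cref{lem:rog_iff_extreme_rank_one} that $\cS(\cM)$ is ROG.

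The only delicate point to check is the edge case $\cM'=\emptyset$, which occurs when no constraint of $\cM$ is tight at $X$. In that case $\cT(\cM')=\cT(\emptyset)=\S^n_+$, and the hypothesis (taken over all subsets $\cM'\subseteq\cM$, including the empty set) guarantees $\S^n_+$ is ROG; \cref{lem:compactness_extreme_for_tight} still places $\R_+X$ as an extreme ray of this cone, so the same conclusion $\rank(X)=1$ follows. Beyond this, the argument is a direct chaining of the already-established lemmas, so I do not anticipate a substantive obstacle; the main care is simply in verifying that the finiteness of $\cM$ supplies the compactness of $\cM\setminus\cM'$ needed to invoke \cref{lem:compactness_extreme_for_tight}.
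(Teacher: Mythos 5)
Your proposal is correct and follows essentially the same route as the paper's own proof: define $\cM'$ as the exact set of tight constraints at an extreme ray $\R_+X$, invoke \cref{lem:compactness_extreme_for_tight} (with finiteness supplying compactness of $\cM\setminus\cM'$) to transfer extremality to $\cT(\cM')$, and conclude $\rank(X)=1$ from the ROG hypothesis. Your explicit treatment of the edge case $\cM'=\emptyset$ is a nice touch of care that the paper leaves implicit, but it is the same argument.
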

\begin{proof}
Let $\R_+ X$ be an extreme ray of $\cS(\cM)$. Define $\cM' \coloneqq \set{M\in\cM:\, \ip{M,X} = 0}$.
By \cref{lem:compactness_extreme_for_tight} and the fact that any finite set is compact, we deduce that $\R_+ X$ is an extreme ray of $\cT(\cM')$. We conclude that $\rank(X) = 1$.\qedhere
\end{proof}

The following lemma shows that the ROG property of $\cT(\cM)$ is equivalent to
the ROG property of $\cT(\overline\cM)$ where $\overline\cM$ is the restriction
of $\cM$ onto the joint range of the matrices $M\in\cM$.
\begin{lemma}
\label{lem:TM_rog_iff_overline_TM_rog}
Let $W \coloneqq \spann\left(\bigcup_{M\in\cM} \range(M)\right)$. For $M\in\cM$, let $\overline M= M_W$ denote the restriction of $M$ to $W$. 
Let $\overline\cM = \set{\overline M:\, M\in\cM}$.
Then, $\cT(\cM)$ is ROG if and only if $\cT(\overline\cM)$ is ROG.
\end{lemma}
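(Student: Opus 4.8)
The plan is to invoke \cref{lem:rog_iff_extreme_rank_one}, reducing the ROG property of each cone to the statement that all of its extreme rays are rank one, and to transfer between $\cT(\cM)$ and $\cT(\overline{\cM})$ via the observation that the defining constraints only see the $W$-block of $X$. Concretely, since $\range(M)\subseteq W$ for every $M\in\cM$, each such $M$ is supported on the $W$-block, i.e.\ $M=\overline{M}\oplus 0_{W^\perp}$, and hence for every $X\in\S^n_+$ one has $\ip{M,X}=\ip{\overline{M},X_W}$. This yields the characterization that $X\in\cT(\cM)$ if and only if $X\in\S^n_+$ and $X_W\in\cT(\overline{\cM})$, and in particular that $Y\oplus 0_{W^\perp}\in\cT(\cM)$ whenever $Y\in\cT(\overline{\cM})$. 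I will establish each implication by checking that extreme rays are rank one.

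For the direction ``$\cT(\overline{\cM})$ ROG $\Rightarrow$ $\cT(\cM)$ ROG'', let $\R_+X$ be an extreme ray of $\cT(\cM)$, so $X\neq 0$. The key step is to produce a nonzero $v\in\range(X)$ with $vv^\top\in\cT(\cM)$. Granting this, \cref{fact:psd_modification_directions} supplies $\epsilon>0$ with $X-\epsilon vv^\top\in\S^n_+$, and since $\ip{M,X\pm\epsilon vv^\top}=0$ for all $M\in\cM$, the whole interval $[X-\epsilon vv^\top,X+\epsilon vv^\top]$ lies in $\cT(\cM)$; extremality (via \cref{fact:extreme_ray_def}) then forces $vv^\top\in\R_+X$, so $\rank(X)=1$. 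To find $v$, I split on whether $X_W=0$. If $X_W=0$, then \cref{lem:block_decomp} gives $X=0_W\oplus X_{W^\perp}$, and any nonzero $z\in\range(X_{W^\perp})\subseteq W^\perp$ yields $v=0_W\oplus z\in\range(X)$ whose $W$-block $(vv^\top)_W=0$ lies in $\cT(\overline{\cM})$. If $X_W\neq 0$, then $X_W\in\cT(\overline{\cM})$ is a nonzero element of an ROG cone, hence admits a rank-one summand $ww^\top\in\cT(\overline{\cM})$ with $0\neq w\in\range(X_W)\subseteq W$; lifting $w$ to some $v\in\range(X)$ with $v_W=w$ gives $(vv^\top)_W=ww^\top\in\cT(\overline{\cM})$, so $vv^\top\in\cT(\cM)$ and $v\neq 0$.

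For the converse, let $\R_+Y$ be an extreme ray of $\cT(\overline{\cM})$ and set $\hat Y=Y\oplus 0_{W^\perp}\in\cT(\cM)$, which has the same rank as $Y$. I will show $\R_+\hat Y$ is an extreme ray of $\cT(\cM)$, so that ROG of $\cT(\cM)$ forces $\rank(\hat Y)=1=\rank(Y)$. Suppose $[\hat Y-Z,\hat Y+Z]\subseteq\cT(\cM)$. Comparing the $W^\perp$-blocks of $\hat Y\pm Z\succeq 0$ (whose $\hat Y$-part vanishes) gives $Z_{W^\perp}\succeq 0$ and $-Z_{W^\perp}\succeq 0$, so $Z_{W^\perp}=0$; then \cref{lem:block_decomp} applied to $\hat Y+Z\succeq 0$ with the subspace $W^\perp$ forces the cross block of $\hat Y+Z$, and hence of $Z$, to vanish, so $Z=Z_W\oplus 0_{W^\perp}$. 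Restricting to $W$ now gives $[Y-Z_W,Y+Z_W]\subseteq\cT(\overline{\cM})$, and extremality of $\R_+Y$ yields $Z_W=\alpha Y$ for some $\alpha\in\R$, i.e.\ $Z=\alpha\hat Y$, as required.

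I expect the main obstacle to be the lifting step in the forward direction: producing $v\in\range(X)$ with prescribed projection $v_W=w$. This rests on the identity $\range(X_W)=\set{v_W:\,v\in\range(X)}$, which I would prove by writing $X_W=(X^{1/2}U)^\top(X^{1/2}U)$ (with $U\colon W\to\R^n$ the inclusion) and invoking $\range(B^\top B)=\range(B^\top)$. Some care is needed to ensure the extracted $v$ is nonzero, which is guaranteed here since $v_W=w\neq 0$, so that the resulting proportionality $X\in\R_+vv^\top$ genuinely certifies $\rank(X)=1$; the remaining verifications (that $M$ is supported on the $W$-block, and that subtracting multiples of $vv^\top$ preserves the equality constraints) are routine.
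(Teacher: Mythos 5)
Your proof is correct and its core follows the paper's own argument: for the direction ``$\cT(\overline\cM)$ ROG $\Rightarrow$ $\cT(\cM)$ ROG'' you use the identical case split on $X_W=0$ versus $X_W\neq 0$, together with the same strategy of lifting a rank-one summand of $X_W$ into $\range(X)$ and perturbing along it (the paper's explicit lift $y=X(\overline z\oplus 0_{W^\perp})$, where $\overline y=\overline X\overline z$, is just a concrete instance of your identity $\range(X_W)=\set{v_W:\,v\in\range(X)}$). The only divergence is in the other direction, where the paper notes that $\cT(\overline\cM)\oplus 0_{W^\perp}$ is a face of $\cT(\cM)$ exposed by $0_W\oplus I_{W^\perp}$ and invokes \cref{lem:rog_iff_faces_rog}, whereas you prove the same fact by hand---showing directly that $Z_{W^\perp}=0$, that the cross block of $Z$ vanishes, and hence that $Y\oplus 0_{W^\perp}$ spans an extreme ray of $\cT(\cM)$ whenever $\R_+Y$ is extreme in $\cT(\overline\cM)$---a slightly more self-contained rendering of the paper's facial argument.
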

\begin{proof}
$(\Rightarrow)$
Note that $\cT(\overline\cM)$ is isomorphic to $\cT(\overline\cM)\oplus0_{W^\perp}$ via the rank-preserving map $X_W \mapsto X_W\oplus 0_{W^\perp}$.
We claim that $\cT(\overline\cM) \oplus 0_{W^\perp}$ is a face of $\cT(\cM)$. Indeed, we can write
\begin{align*}
\cT(\overline\cM) \oplus 0_{W^\perp} = \cT(\cM) \cap \set{X\in\S^n_+:\, \ip{0_W\oplus I_{W^\perp}, X} = 0}
\end{align*}
and note that $0_W\oplus I_{W^\perp}\in\S^n_+$. Then, $\cT(\overline\cM) \oplus 0_{W^\perp}$ is ROG by \cref{lem:rog_iff_faces_rog}. We conclude that $\cT(\overline \cM)$ is ROG.

$(\Leftarrow)$ Let $\R_+(X)$ be an extreme ray of $\cT(\cM)$ and set $\overline
X \coloneqq X_W$.
We will show that $\rank(X) = 1$ by considering two cases.
First, suppose $\overline X = 0$, then $\range(X)\subseteq W^\perp$. We deduce
that as $X\neq 0$, there exists a nonzero vector $y\in\range(X)\subseteq
W^\perp$. Note that $\ip{M, yy^\top} = \ip{M_W, (yy^\top)_{W}} = 0$.
Furthermore, $X\pm \epsilon yy^\top\in\S^n_+$ for all small enough $\epsilon>0$.
By the assumption that $\R_+(X)$ is an extreme ray, we then conclude that $X$ is
a scalar multiple of $yy^\top$ and is rank-one.

Next, suppose $\overline X\neq 0$.
As $\ip{M,X} = \ip{\overline M, \overline X}$ for every $M\in\cM$, we have that
$\overline X\in\cT(\overline\cM)$. By the assumption that $\cT(\overline\cM)$ is
ROG, we may write $\overline X = \sum_{i=1}^k \overline y_i\overline y_i^\top$
where $\overline y_i\overline y_i^\top\in
\cT(\overline\cM)$ are each nonzero.
Fix $\overline y \coloneqq \overline y_1$ and define $\overline z$ such that
$\overline y = \overline X \overline z$. This is possible as $\overline y
\in\range(\overline X)$. Finally, define
\begin{align*}
y \coloneqq X (\overline z \oplus 0_{W^\perp}).
\end{align*}
We claim that $X\pm \epsilon yy^\top \in\cT(\cM)$ for all $\epsilon>0$ small
enough. Indeed, as $y\in\range(X)$ we have that $X\pm \epsilon yy^\top
\in\S^n_+$ for all $\epsilon>0$ small enough. Furthermore, for all $M\in\cM$ we have
\begin{align*}
\ip{M, yy^\top} = \ip{\overline M, \overline y\overline y^\top} = 0,
\end{align*}
where the second equality follows from the fact that $\overline y \in
\cT(\overline \cM)$. Additionally note that $\overline y$ is nonzero and $y_W = \overline
y$ so that $y$ is nonzero.
We deduce that $X\pm \epsilon yy^\top \in \cT(\cM)$ for all $\epsilon>0$ small enough.
By the assumption that $\R_+(X)$ is an extreme ray, we then conclude that $X$ is
a scalar multiple of $yy^\top$ and is rank-one.\qedhere

\end{proof}

\begin{figure}
	\centering
		\begin{tikzcd}
		\boxed{\cM \text{ is finite and } \forall \cM'\subseteq \cM,\, \cT(\cM')\text{ ROG}} \arrow[r,Rightarrow]
		& \boxed{\cS(\cM)\text{ ROG}} \arrow[r,Rightarrow]
		& \boxed{\cT(\cM)\text{ ROG}}
		\end{tikzcd}
	\caption{A summary of \cref{lem:TMprime_rog_then_SM_rog,cor:SM_rog_then_TM_rog}}
	\label{fig:relating_S_and_T}
\end{figure}

\begin{remark}
  The characterizations given in \cref{lem:rog_iff_faces_rog,lem:rog_iff_nontrivial_faces_rog,cor:SM_rog_then_TM_rog,lem:TMprime_rog_then_SM_rog,lem:TM_rog_iff_overline_TM_rog,lem:compactness_extreme_for_tight} are based on the facial structure of the sets $\cS(\cM)$ and $\cT(\cM)$ and in a sense are analogous to characterizations of integral polyhedra.\qedhere
\end{remark}

\begin{remark}
The ROG property is not preserved under trivial liftings. 
When $\cM = \set{M_1,\dots,M_k}$ is finite, one may attempt to replace all of the inequalities defining $\cS(\cM)$ with equalities by adding new slack variables. Specifically, for $i\in[k]$, let $\overline{M}_i \in \bb S^{n+k}$ be the following block matrix
\begin{align*}
\overline{M}_i \coloneqq \begin{pmatrix}
	M_i &\\& e_ie_i^\top
\end{pmatrix}
\end{align*}
and let $\overline{\cM} \coloneqq \set{\overline{M}_1,\dots,\overline{M}_k}$.
It is straightforward to show that the ROG property is preserved under the projection of $\S^{n+k}$ onto $\S^n$. Thus, if $\cT\left(\overline{\cM}\right)$ is ROG, then $\cS(\cM)$ is also ROG. Unfortunately the reverse implication is not true in general. We will give a counterexample in \cref{sec:lifting_LMI_to_LME} (see \cref{ex:lifting_rog_to_non_rog}).\qedhere
\end{remark}

\subsection{Simple operations preserving ROG property}

We now present a few lemmas that are useful in reasoning about extreme rays of $\cS(\cM)$.
The following lemma states that an extreme ray $\R_+X$ ``only cares about'' constraints ``in the range of $X$.''
\begin{lemma}
\label{lem:extreme_for_constraints_in_range}
Let $\cM\subseteq\S^n$ and let $\R_+X$ be an extreme ray of $\cS(\cM)$. Let $W\coloneqq\range(X)$ and let $\cM_W\coloneqq \set{M_W:\, M\in\cM}$. Then $\R_+(X_W)$ is an extreme ray of $\cS(\cM_W)$. In particular, if $\cS(\cM_W)$ is ROG, then $\rank(X) = \rank(X_W) = 1$.
\end{lemma}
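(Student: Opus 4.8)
The plan is to verify extremality of $\R_+(X_W)$ directly through the characterization in \cref{fact:extreme_ray_def}, transporting perturbations back and forth between $\S^n$ and $\S^W$ along the correspondence $Y \leftrightarrow Y \oplus 0_{W^\perp}$, and then to read off the rank statement from \cref{lem:rog_iff_extreme_rank_one}.

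First I would record the structural facts. Since $\R_+X$ is an extreme ray, $X\neq 0$, so $W=\range(X)$ is a nonzero subspace; moreover $X$ vanishes on $W^\perp$, so \cref{lem:block_decomp} gives $X = X_W\oplus 0_{W^\perp}$ with $X_W\neq 0$ (indeed $X_W$ is positive definite on $W$). The workhorse identity is that for every $M\in\cM$ one has $\ip{M,X}=\ip{M_W,X_W}$, which follows by writing $M$ and $X$ in block form adapted to the splitting $\R^n = W\oplus W^\perp$ and observing that the off-diagonal and $W^\perp$-blocks of $X$ vanish. This identity immediately shows $X_W\in\cS(\cM_W)$: it lies in $\S^W_+$ and satisfies $\ip{M_W,X_W}=\ip{M,X}\geq 0$ for all $M\in\cM$.

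Next I would establish extremality of $\R_+(X_W)$. Suppose $Z\in\S^W$ satisfies $[X_W-Z,X_W+Z]\subseteq\cS(\cM_W)$, and set $Y\coloneqq Z\oplus 0_{W^\perp}\in\S^n$. For each $t\in[-1,1]$ we have $X+tY = (X_W+tZ)\oplus 0_{W^\perp}$; since $X_W+tZ\in\cS(\cM_W)\subseteq\S^W_+$, the matrix $X+tY$ is positive semidefinite, and for every $M\in\cM$ the identity gives $\ip{M,X+tY}=\ip{M_W,X_W+tZ}\geq 0$. Hence $[X-Y,X+Y]\subseteq\cS(\cM)$, and since $\R_+X$ is extreme, \cref{fact:extreme_ray_def} yields $\alpha\in\R$ with $Y=\alpha X$. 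Restricting both sides to $W$ gives $Z=\alpha X_W$, which is precisely the condition \cref{fact:extreme_ray_def} requires to conclude that $\R_+(X_W)$ is an extreme ray of $\cS(\cM_W)$.

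Finally, the ``in particular'' clause follows from \cref{lem:rog_iff_extreme_rank_one}: if $\cS(\cM_W)$ is ROG, then each of its extreme rays is rank-one, so $\rank(X_W)=1$; and because $X=X_W\oplus 0_{W^\perp}$, augmenting by the zero block does not change rank, whence $\rank(X)=\rank(X_W)=1$. I do not anticipate a genuine obstacle here: the entire content is the lifting/restriction correspondence, and the only point demanding a little care is checking that a perturbation $Y$ supported on $W$ keeps every matrix on the segment $[X-Y,X+Y]$ simultaneously positive semidefinite and feasible for all $M\in\cM$ --- which the block identity $\ip{M,\,\cdot\,\oplus 0_{W^\perp}}=\ip{M_W,\,\cdot\,}$ handles cleanly.
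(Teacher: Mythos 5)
Your proposal is correct and follows essentially the same route as the paper's proof: lift a perturbation $Z\in\S^W$ of $X_W$ to $Y = Z\oplus 0_{W^\perp}\in\S^n$, use the identity $\ip{M, \,\cdot\,\oplus 0_{W^\perp}} = \ip{M_W,\,\cdot\,}$ to show $[X-Y,X+Y]\subseteq\cS(\cM)$, invoke extremality of $\R_+X$ to get $Y=\alpha X$, and restrict back to $W$. The only differences are cosmetic: you check every point of the segment rather than just the endpoints (the paper relies on convexity), and you spell out the rank-preservation step for the ``in particular'' clause, which the paper leaves implicit.
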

\begin{proof}
Suppose $Y_W\in\S^W$ is such that $[X_W-Y_W,X_W+Y_W]\subseteq\cS(\cM_W)$. Let $Y = 0_{W^\perp}\oplus Y_W$. Then, $X + Y = 0_{W^\perp}\oplus (X_W + Y_W)$, and for any $M\in\cM$ we have $\ip{M,X+Y} = \ip{M_W,X_W+Y_W}\geq 0$. We deduce that $X+Y\in\cS(\cM)$. Similarly $X-Y\in\cS(\cM)$ whence $[X-Y,X+Y]\subseteq\cS(\cM)$.
As $\R_+X$ is extreme in $\cS(\cM)$, we deduce that $Y = \alpha X$ for some $\alpha\in\R$.
Consequently, $Y_W = \alpha X_W$ for some $\alpha\in\R$ and $\R_+(X_W)$ is extreme in $\cS(\cM_W)$.\qedhere
\end{proof}

The following lemma addresses the case when $\cM$ can be partitioned into ``non-interacting'' sets of constraints.

\begin{lemma}\label{lem:non-interactingSufficientCond}
Let $\cM \subset \S^n$ be a finite union of compact sets $\cM = \bigcup_{i=1}^k \cM_i$.
Further, suppose that for all nonzero $X\in\S^n_+$ and $i\in[k]$, if $\ip{M_i,X} = 0$ for some $M_i\in\cM_i$, then $\ip{M, X}>0$ for all $M\in\cM\setminus \cM_i$.
Then, $\cS(\cM)$ is ROG if and only if $\cS(\cM_i)$ is ROG for all $i\in[k]$.
\end{lemma}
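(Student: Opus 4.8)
The plan is to prove both implications by examining the extreme rays of the cones involved and invoking \cref{lem:rog_iff_extreme_rank_one}, which reduces ROG-ness to the statement that every extreme ray has rank one. Two facts will be used repeatedly: that $\cM = \bigcup_{i=1}^k \cM_i$ is compact (a finite union of compact sets), and that the non-interaction hypothesis says precisely that, at any nonzero $X \in \S^n_+$, the set of tight constraints $\set{M \in \cM : \ip{M,X} = 0}$ is contained in a single family $\cM_i$ (namely the family of any one tight constraint).

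For the forward direction ($\cS(\cM)$ ROG $\Rightarrow$ each $\cS(\cM_i)$ ROG), I would take an extreme ray $\R_+ X$ of $\cS(\cM_i)$ and split on whether $X$ makes some constraint of $\cM_i$ tight. If no constraint of $\cM_i$ is tight at $X$, then since $\cM_i$ is compact, \cref{lem:compactness_extreme_for_tight} with empty tight set shows $\R_+ X$ is an extreme ray of $\cT(\emptyset) = \S^n_+$, so $\rank(X) = 1$. If instead $\ip{M_0, X} = 0$ for some $M_0 \in \cM_i$, then non-interaction gives $\ip{M,X} > 0$ for all $M \in \cM \setminus \cM_i$; combined with $X \in \cS(\cM_i)$ this yields $X \in \cS(\cM)$. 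Since $\cS(\cM) \subseteq \cS(\cM_i)$ and $\R_+ X$ is extreme in the larger cone, it is extreme in $\cS(\cM)$ as well, and ROG-ness of $\cS(\cM)$ forces $\rank(X) = 1$.

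For the reverse direction, the tempting symmetric approach is: given an extreme ray $\R_+ X$ of $\cS(\cM)$ with tight set $\cM' \subseteq \cM_i$, show $\R_+ X$ is extreme in $\cS(\cM_i)$ and invoke its ROG-ness. This is the hard direction of extremality transfer (from the smaller cone to the larger one), and I expect it to be the main obstacle: pushing a perturbation back into $\cS(\cM)$ requires a uniform bound $\inf_{\cM \setminus \cM_i} \ip{M,X} > 0$, which can fail when the families overlap and several are simultaneously ``active'' at $X$ (a constraint of $\cM \setminus \cM_i$ can approach a tight constraint lying in $\cM_i$). The key idea that avoids this is to \emph{decompose rather than perturb}. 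If $\cM' = \emptyset$ then $X$ is extreme in $\S^n_+$ as above; otherwise fix $M_0 \in \cM'$ and a family $\cM_i \ni M_0$, so non-interaction gives $\cM' \subseteq \cM_i$ and $\ip{M,X} > 0$ on $\cM \setminus \cM_i$. Using that $\cS(\cM_i)$ is ROG, I would write $X = \sum_a x_a x_a^\top$ with each $x_a x_a^\top \in \cS(\cM_i)$.

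The crucial observation is then that every generator lands back in $\cS(\cM)$. For each $M \in \cM' \subseteq \cM_i$ we have $0 = \ip{M,X} = \sum_a \ip{M, x_a x_a^\top}$ with each summand nonnegative, so $\ip{M, x_a x_a^\top} = 0$ for all $a$; in particular each nonzero $x_a x_a^\top$ is tight at $M_0 \in \cM_i$. Applying non-interaction at the point $x_a x_a^\top$ then gives $\ip{M, x_a x_a^\top} > 0$ for all $M \in \cM \setminus \cM_i$, so together with $x_a x_a^\top \in \cS(\cM_i)$ we get $x_a x_a^\top \in \cS(\cM)$. Thus $X$ is a sum of elements of the cone $\cS(\cM)$, and since $\R_+ X$ is an extreme ray each $x_a x_a^\top$ must lie on $\R_+ X$, forcing $\rank(X) = 1$. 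I would close by noting, via \cref{lem:rog_iff_extreme_rank_one}, that in both directions every extreme ray has rank one, which is exactly the claimed equivalence.
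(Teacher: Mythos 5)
Your proof is correct, and its forward direction is exactly the paper's: the same case split on whether some constraint of $\cM_i$ is tight at $X$, the same appeal to \cref{lem:compactness_extreme_for_tight} when none is, and the same observation that extremality in $\cS(\cM_i)$ passes down to the subcone $\cS(\cM)\ni X$ when one is. The reverse direction is where you genuinely depart from the paper, and your stated worry about the ``symmetric'' route is well founded: the paper does take that route, invoking \cref{lem:compactness_extreme_for_tight} together with ``the assumption that $\cM\setminus\cM_i$ is compact'' to conclude that $\R_+X$ is extreme in $\cS(\cM_i)$, and then citing ROG-ness of $\cS(\cM_i)$. When the families are pairwise disjoint this is fine, since then $\cM\setminus\cM_i=\bigcup_{j\neq i}\cM_j$ is compact; but for overlapping families $\cM\setminus\cM_i$ is merely a difference of compact sets, and the quantity $\inf_{M\in\cM\setminus\cM_i}\ip{M,X}$ on which the perturbation inside \cref{lem:compactness_extreme_for_tight} relies can be an unattained zero. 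For instance, with $n=2$, $\cM_1=\set{\Diag(1,t):\, t\in[0,1]}$, $\cM_2=\set{\left(\begin{smallmatrix}1 & t\\ t & t\end{smallmatrix}\right):\, t\in[0,1]}$, and $X=e_2e_2^\top$, the non-interaction hypothesis holds, the tight set is $\set{e_1e_1^\top}\subseteq\cM_1\cap\cM_2$, and the infimum above equals $0$ for both $i=1$ and $i=2$, so the cited lemma cannot be applied for either choice (the intermediate conclusion still happens to hold there; it is the proof step that breaks). Your alternative---decompose $X=\sum_a x_ax_a^\top$ in $\cS(\cM_i)$ using its ROG property, note that $\ip{M_0,X}=0$ forces $\ip{M_0,x_ax_a^\top}=0$ for every summand, apply non-interaction at each nonzero summand to conclude $x_ax_a^\top\in\cS(\cM)$, and finish by extremality of $\R_+X$ in $\cS(\cM)$---needs no compactness of $\cM\setminus\cM_i$ and no uniform positive lower bound, so it proves the lemma as stated, overlapping families included. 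The paper's route buys brevity and reuses a single lemma in both directions; yours buys robustness in the overlapping case, at the modest cost of invoking the ROG decomposition device, which is the same one the paper itself uses in proving \cref{lem:rog_iff_extreme_rank_one} and \cref{lem:SDPtightness}.
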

\begin{proof}
$(\Rightarrow)$
Fix $i\in[k]$ and let $\R_+ X$ be an extreme ray of $\cS(\cM_i)$.
If $\ip{M_i,X} >0$ for all $M_i\in\cM_i$, then \cref{lem:compactness_extreme_for_tight} implies that $\R_+ X$ is an extreme ray of $\S^n_+$ and so $\rank(X) = 1$. Now suppose $\ip{M_i,X} = 0$ for some $M_i\in\cM_i$. By assumption, $\ip{M,X}>0$ for all $M\in\cM\setminus \cM_i$ so that $X\in \cS(\cM)$. As $\cS(\cM)\subseteq\cS(\cM_i)$, we have that $\R_+X$ must also be an extreme ray of $\cS(\cM)$. We deduce that $\rank(X) = 1$.

$(\Leftarrow)$ Let $\R_+ X$ be an extreme ray of $\cS(\cM)$. Define $\cM' \coloneqq \set{M\in\cM:\, \ip{M,X}=0}$. If $\cM'=\emptyset$ then \cref{lem:compactness_extreme_for_tight} implies that $\R_+ X$ is an extreme ray of $\cT(\emptyset) = \S^n_+$ and so $\rank(X) = 1$.

Now suppose $\cM'$ is nonempty. Then, by assumption, $\cM'\subseteq\cM_i$ for some $i$. By \cref{lem:compactness_extreme_for_tight} and the assumption that $\cM\setminus\cM_i$ is compact, we deduce that $\R_+X$ is an extreme ray of $\cS(\cM_i)$. We conclude that $\rank(X) = 1$.\qedhere
\end{proof}

Finally, the following lemma states that an arbitrary intersection of ROG cones is ROG if and only if no new extreme rays are introduced.

\begin{lemma}
\label{lem:no_new_extreme}
Let $\cM\subseteq\S^n$ be a union $\cM = \bigcup_{\alpha\in A} \cM_\alpha$. Suppose that $\cS(\cM_\alpha)$ is ROG for every $\alpha\in A$.
Then, $\cS(\cM)$ is ROG if and only if
\begin{align*}
\extr(\cS(\cM)) \subseteq \bigcap_{\alpha\in A} \extr(\cS(\cM_\alpha)).
\end{align*}
\end{lemma}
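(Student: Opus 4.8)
The plan is to reduce both directions to the two facts already in hand: \cref{lem:rog_iff_extreme_rank_one} (a closed convex subcone of $\S^n_+$ is ROG exactly when all of its extreme rays are rank-one) and \cref{fact:rank_one_implies_extreme} (every rank-one member of a closed convex subcone of $\S^n_+$ automatically spans an extreme ray of it). The one structural observation I would record first is that enlarging the constraint set shrinks the feasible cone: since $\cM \supseteq \cM_\alpha$ for each $\alpha$, we have $\cS(\cM) \subseteq \cS(\cM_\alpha)$, equivalently $\cS(\cM) = \bigcap_{\alpha\in A}\cS(\cM_\alpha)$. I will assume $A \neq \emptyset$; the empty case is degenerate, with $\cS(\cM) = \S^n_+$ (which is ROG) and the displayed inclusion vacuous.

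For the forward direction I would suppose $\cS(\cM)$ is ROG and take any extreme ray $\R_+ X$ of $\cS(\cM)$. By \cref{lem:rog_iff_extreme_rank_one} this forces $\rank(X) = 1$. Fixing an arbitrary $\alpha \in A$, the containment $X \in \cS(\cM) \subseteq \cS(\cM_\alpha)$ together with $\rank(X)=1$ lets me apply \cref{fact:rank_one_implies_extreme} to the closed convex cone $\cS(\cM_\alpha)$ and conclude that $\R_+ X$ is an extreme ray of $\cS(\cM_\alpha)$. Since $\alpha$ was arbitrary, $\R_+ X \in \bigcap_{\alpha\in A}\extr(\cS(\cM_\alpha))$, which is the claimed inclusion. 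The point worth emphasizing is that once $X$ is known to be rank-one, its extremality in every subcone that contains it comes for free, so no case analysis is required.

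For the converse I would assume $\extr(\cS(\cM)) \subseteq \bigcap_{\alpha}\extr(\cS(\cM_\alpha))$ and take any extreme ray $\R_+ X$ of $\cS(\cM)$. By hypothesis $\R_+ X$ is then an extreme ray of $\cS(\cM_\alpha)$ for every $\alpha$ (and, as $A \neq \emptyset$, at least one). Since each $\cS(\cM_\alpha)$ is ROG by assumption, \cref{lem:rog_iff_extreme_rank_one} gives $\rank(X)=1$. Thus every extreme ray of $\cS(\cM)$ is rank-one, and a second application of \cref{lem:rog_iff_extreme_rank_one} yields that $\cS(\cM)$ is ROG.

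I do not expect a genuine obstacle here: both implications are essentially one-line consequences of the already-established lemma and fact, and the real content of the statement is the clean packaging of ``$X$ is rank-one $\iff$ $X$ is extreme in each ROG piece.'' The only things to watch are the direction of the containment $\cS(\cM) \subseteq \cS(\cM_\alpha)$ (larger constraint set gives the smaller cone) and the trivial bookkeeping for $A = \emptyset$.
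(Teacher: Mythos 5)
Your proof is correct and follows essentially the same route as the paper's: both directions hinge on \cref{lem:rog_iff_extreme_rank_one} together with \cref{fact:rank_one_implies_extreme} applied via the containment $\cS(\cM) = \bigcap_{\alpha\in A}\cS(\cM_\alpha)$. The only (harmless) addition is your explicit handling of the degenerate case $A = \emptyset$, which the paper leaves implicit.
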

\begin{proof}
$(\Leftarrow)$ Let $\R_+X$ be an extreme ray of $\cS(\cM)$. Then, by assumption, $\R_+X$ is an extreme ray of $\cS(\cM_\alpha)$ for each $\alpha\in A$. By recalling that each $\cS(\cM_\alpha)$ is ROG, we deduce $\rank(X) = 1$.

$(\Rightarrow)$ Let $\R_+X$ be an extreme ray of $\cS(\cM)$. Then, by the assumption that $\cS(\cM)$ is ROG, we have $\rank(X) = 1$. Next, note that $X\in\cS(\cM)=\bigcap_{\alpha\in A} \cS(\cM_\alpha)$, whence $X\in\cS(\cM_\alpha)$ for all $\alpha\in A$. Then as $\rank(X)=1$, we deduce that $\R_+X$ is extreme in $\cS(\cM_\alpha)$ for all $\alpha\in A$ by \cref{fact:rank_one_implies_extreme}.\qedhere
\end{proof}

\subsection{The ROG property and solutions of quadratic systems}

We next examine the ROG property of a set and its connection to the existence of
nonzero solutions of underlying quadratic systems of inequalities and/or equations.

\begin{definition}
Given $\cM\subseteq \S^n$ and $X\in\cS(\cM)$, we define
\begin{align*}
\cE(X,\cM) &\coloneqq \set{x\in\R^n:\,  \abs{x^\top Mx} \leq \ip{M,X},\,\forall M\in\cM}.\qedhere
\end{align*}
\end{definition}

\begin{lemma}
\label{lem:SM_rog_iff_nonzero_envelope}
$\cS(\cM)$ is ROG if and only if for every nonzero $X\in\cS(\cM)$ we have $\range(X)\cap \cE(X,\cM)\neq \set{0}$.
\end{lemma}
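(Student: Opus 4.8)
The plan is to reduce the statement to the characterization in \cref{lem:rog_iff_extreme_rank_one} (that $\cS(\cM)$ is ROG exactly when every extreme ray has rank one) and to build a dictionary between membership in $\cE(X,\cM)$ and feasible rank-one perturbation directions. The key observation I would isolate first is the following: for a nonzero $x\in\range(X)$, the condition $x\in\cE(X,\cM)$ is equivalent to $\ip{M,X}\pm\ip{M,xx^\top}\geq 0$ for all $M\in\cM$ (just unfold the absolute value in the definition of $\cE$), which says precisely that both of $X\pm xx^\top$ satisfy the linear inequalities defining $\cS(\cM)$. Because $x\in\range(X)$, \cref{fact:psd_modification_directions} also furnishes some $\epsilon\in(0,1]$ with $X-\epsilon xx^\top\succeq 0$, and since $\cE(X,\cM)$ is closed under scaling by factors in $[0,1]$ (the constraint functions are homogeneous of degree two while $\ip{M,X}\geq 0$ is fixed), the vector $\epsilon x$ still lies in $\range(X)\cap\cE(X,\cM)$ and yields the two-sided feasible segment $[X-\epsilon xx^\top,\,X+\epsilon xx^\top]\subseteq\cS(\cM)$. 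This is the bridge I will use in both directions.

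For the forward direction, I would assume $\cS(\cM)$ is ROG and take any nonzero $X\in\cS(\cM)$. Writing $X=\sum_{i=1}^k x_ix_i^\top$ with each $x_ix_i^\top\in\cS(\cM)$ (possible by \cref{def:ROG}), I claim any nonzero $x_i$ certifies $\range(X)\cap\cE(X,\cM)\neq\set{0}$: indeed $x_i\in\range(X)=\spann\set{x_1,\dots,x_k}$, and for each $M\in\cM$ nonnegativity of every summand gives $\ip{M,X}=\sum_j x_j^\top M x_j\geq x_i^\top M x_i\geq 0$, hence $\abs{x_i^\top M x_i}\leq\ip{M,X}$, i.e.\ $x_i\in\cE(X,\cM)$. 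This direction becomes essentially immediate once the rank-one decomposition is written down.

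For the reverse direction I would argue the contrapositive: if $\cS(\cM)$ is not ROG, then by \cref{lem:rog_iff_extreme_rank_one} there is an extreme ray $\R_+X$ with $\rank(X)\geq 2$, and I will show this same $X$ violates the envelope condition, namely $\range(X)\cap\cE(X,\cM)=\set{0}$. Suppose for contradiction some nonzero $x$ lies in the intersection; the bridge observation produces a segment $[X-\epsilon xx^\top,\,X+\epsilon xx^\top]\subseteq\cS(\cM)$, so extremality via \cref{fact:extreme_ray_def} forces $\epsilon xx^\top=\alpha X$ for some $\alpha\in\R$. But $\epsilon xx^\top$ has rank one while $\alpha X$ has rank $0$ or at least $2$, a contradiction. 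The main point requiring care is precisely this rank comparison: the statement is nontrivial only when $\rank(X)\geq 2$ (for rank-one $X=vv^\top$ the vector $v$ always lies in $\range(X)\cap\cE(X,\cM)$, so the condition holds automatically), and it is exactly the hypothesis $\rank(X)\geq 2$ that guarantees the rank-one direction $xx^\top$ cannot be a scalar multiple of $X$. The only other subtlety is the bookkeeping with $\epsilon$, which is needed solely to keep $X-\epsilon xx^\top$ positive semidefinite while leaving the linear inequalities intact.
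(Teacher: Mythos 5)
Your proof is correct and takes essentially the same approach as the paper: the forward direction is the identical rank-one-decomposition bound, and your contrapositive for the reverse direction is precisely the paper's extreme-ray perturbation argument (the segment $[X-\epsilon xx^\top,\,X+\epsilon xx^\top]$ combined with \cref{fact:extreme_ray_def} and \cref{lem:rog_iff_extreme_rank_one}). The only quibble is notational: the vector whose outer product is $\epsilon xx^\top$ is $\sqrt{\epsilon}\,x$ rather than $\epsilon x$, but this bookkeeping slip does not affect the argument.
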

\begin{proof}
$(\Rightarrow)$ Suppose $X\in\cS(\cM)$ is nonzero. Because $\cS(\cM)$ is ROG, we can write $X = \sum_{i=1}^k x_ix_i^\top$ using nonzero matrices $x_ix_i^\top \in\cS(\cM)$. As $X$ is a nonzero matrix, we have $k\geq 1$ and thus $\bar x\coloneqq x_1$ exists.
Then, for every $M\in\cM$ and $i\in[k]$, we have $x_i^\top M x_i\geq 0$. In particular, $0\leq\bar x^\top M \bar x\leq \sum_{i=1}^k x_i^\top M x_i = \ip{M,X}$. Furthermore, $\bar x\in\range(X)$. We conclude that $\range(X)\cap \cE(X,\cM)$ contains the nonzero element $\bar x$.

$(\Leftarrow)$ Let $\R_+ X$ be an extreme ray of $\cS(\cM)$. By assumption, there exists a nonzero $x\in\range(X)$ such that
\begin{align*}
\abs{x^\top Mx} \leq \ip{M,X},\,\forall M\in\cM.
\end{align*}
By picking $\epsilon>0$ small enough, we can simultaneously ensure that $X\pm \epsilon xx^\top \in \S^n_+$ and that
\begin{align*}
\ip{M, X\pm \epsilon xx^\top}\geq (1-\epsilon) \ip{M,X}\geq 0 ,\,\forall M\in\cM.
\end{align*}
Hence, we conclude that the interval $[X-\epsilon xx^\top, X+\epsilon xx^\top]$ is contained in $\cS(\cM)$. In particular, because $\R_+ X$ is an extreme ray of $\cS(\cM)$, we deduce that $\epsilon xx^\top$ is a scalar multiple of $X$ and hence $\rank(X) = 1$.\qedhere
\end{proof}

When studying $\cT(\cM)$, we can replace the set $\cE(X,\cM)$ in \cref{lem:SM_rog_iff_nonzero_envelope} with a simpler set corresponding to solutions to a homogeneous system of quadratic equations.\footnote{Readers familiar with algebraic geometry will recognize this as the variety defined by $\cM$.}

\begin{definition}
Given $\cM\subseteq \S^n$, we define
\begin{align*}
\cN(\cM) &\coloneqq \set{x\in\R^n:\, x^\top M x = 0 ,\,\forall M\in\cM}.\qedhere
\end{align*}
\end{definition}

\begin{remark}\label{rem:NM_EXM_rel}
Note that for every $\cM\subseteq \S^n$ and every $X \in \cS(\cM)$, we have $\cN(\cM) \subseteq \cE(X,\cM)$.\qedhere
\end{remark}

\begin{corollary}
\label{cor:TM_rog_iff_nonzero_null}
$\cT(\cM)$ is ROG if and only if for every nonzero $X\in \cT(\cM)$ we have $\range(X)\cap \cN(\cM)\neq\set{0}$.
\end{corollary}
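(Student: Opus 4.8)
The plan is to reduce this corollary to \cref{lem:SM_rog_iff_nonzero_envelope} by re-expressing the equality-defined set $\cT(\cM)$ as an inequality-defined set. The key observation is that $\ip{M,X} = 0$ holds if and only if both $\ip{M,X}\geq 0$ and $\ip{-M,X}\geq 0$. Writing $-\cM \coloneqq \set{-M:\, M\in\cM}$, this immediately gives the set identity $\cT(\cM) = \cS(\cM\cup(-\cM))$, so that $\cT(\cM)$ is ROG exactly when $\cS(\cM\cup(-\cM))$ is ROG.

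Next I would apply \cref{lem:SM_rog_iff_nonzero_envelope} with $\cM\cup(-\cM)$ playing the role of $\cM$. This yields that $\cT(\cM) = \cS(\cM\cup(-\cM))$ is ROG if and only if for every nonzero $X\in\cT(\cM)$ we have $\range(X)\cap\cE(X,\cM\cup(-\cM))\neq\set{0}$. The remaining task is purely to identify the envelope $\cE(X,\cM\cup(-\cM))$ for matrices $X\in\cT(\cM)$.

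To do so, fix a nonzero $X\in\cT(\cM)$, so that $\ip{M,X} = 0$ for every $M\in\cM$ and hence also $\ip{-M,X} = 0$. By definition, a vector $x$ lies in $\cE(X,\cM\cup(-\cM))$ precisely when $\abs{x^\top M x}\leq\ip{M,X} = 0$ and $\abs{x^\top(-M)x}\leq\ip{-M,X} = 0$ for all $M\in\cM$; both inequalities reduce to the single condition $x^\top M x = 0$. Thus $\cE(X,\cM\cup(-\cM)) = \set{x\in\R^n:\, x^\top M x = 0,\,\forall M\in\cM} = \cN(\cM)$, and substituting this equality into the criterion from the previous paragraph gives exactly the claimed equivalence.

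The argument is an unwinding of definitions and I expect no substantive obstacle. The only step requiring any care is the collapse $\cE(X,\cM\cup(-\cM)) = \cN(\cM)$, which relies on the fact that all the relevant right-hand sides $\ip{M,X}$ vanish on $\cT(\cM)$; this is precisely what turns the two-sided envelope constraint into the homogeneous quadratic equations defining $\cN(\cM)$, strengthening the one-sided containment already noted in \cref{rem:NM_EXM_rel} to an equality in this symmetric setting.
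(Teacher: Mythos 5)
Your proof is correct and is essentially the paper's own argument: the paper's proof reads ``Note that $\cS(-\cM \cup \cM) = \cT(\cM)$ and apply \cref{lem:SM_rog_iff_nonzero_envelope}.'' You have simply made explicit the (correct) envelope collapse $\cE(X,\cM\cup(-\cM)) = \cN(\cM)$ for $X\in\cT(\cM)$ that the paper leaves implicit.
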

\begin{proof}
Note that $\cS(-\cM \cup \cM) = \cT(\cM)$ and apply \cref{lem:SM_rog_iff_nonzero_envelope}.\qedhere
\end{proof}

\begin{remark}
\label{rem:rank_at_least_two}
When applying \cref{lem:SM_rog_iff_nonzero_envelope}, it suffices to check the right hand side only for matrices $X$ with rank at least two. Indeed if $X=xx^\top$, then $x\in\range(X)\cap \cE(X,\cM)$. The same is true for \cref{cor:TM_rog_iff_nonzero_null}.\qedhere
\end{remark}

\begin{figure}
	\centering
	\begin{tikzcd}
		\boxed{\cS(\cM)\text{ ROG}} \arrow[r,Leftrightarrow]\arrow[d,Rightarrow]
		&\boxed{\forall X\in\cS(\cM)\setminus\set{0},\,\range(X)\cap\cE(X,\cM)\neq \set{0}} \arrow[d,Rightarrow]\\
		\boxed{\cT(\cM)\text{ ROG}} \arrow[r,Leftrightarrow]
		&\boxed{\forall X\in\cT(\cM)\setminus\set{0},\,\range(X)\cap\cN(\cM)\neq \set{0}}
		\end{tikzcd}
	\caption{A summary of \cref{lem:SM_rog_iff_nonzero_envelope} and \cref{cor:TM_rog_iff_nonzero_null}.}
\end{figure}

\subsection{Known ROG sets}\label{sec:knownbasicROGsets}
In order to familiarize the reader with our notation and setup, we now recover three known results in our language.
We begin with a result due to \citet{sturm2003cones} regarding spectrahedral cones defined by a single LMI.
\begin{lemma}
\label{lem:M_leq_1}
Consider any $M\in\S^n$, and let $\cM = \set{M}$. Then $\cS(\cM)$ is ROG.
\end{lemma}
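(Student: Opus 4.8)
The plan is to invoke \cref{lem:SM_rog_iff_nonzero_envelope}, which reduces the ROG property of $\cS(\cM)$ to showing that every nonzero $X\in\cS(\cM)$ satisfies $\range(X)\cap\cE(X,\cM)\neq\set{0}$. Since $\cM=\set{M}$, the envelope is simply $\cE(X,\set{M})=\set{x\in\R^n:\,\abs{x^\top Mx}\leq\ip{M,X}}$, so the task is to exhibit a nonzero $x\in\range(X)$ with $\abs{x^\top Mx}\leq\ip{M,X}$. By \cref{rem:rank_at_least_two} I may assume $\rank(X)\geq 2$. Writing $W\coloneqq\range(X)$, I then have $\dim(W)\geq 2$ and $X_W\succ0$, while $c\coloneqq\ip{M,X}=\ip{M_W,X_W}\geq0$ because $X\in\cS(\cM)$. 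Everything therefore happens inside $W$, and the key object is the restricted form $M_W$.

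The main step is a dichotomy on whether $M_W$ is sign-definite. If $M_W$ admits a nonzero isotropic vector $x\in W$, i.e.\ $x^\top M_W x=0$, then $\abs{x^\top Mx}=0\leq c$, so $x$ already witnesses $\range(X)\cap\cE(X,\cM)\neq\set{0}$. Such an $x$ exists whenever $M_W$ fails to be sign-definite: if $M_W\neq0$ is semidefinite but singular, any nonzero vector in $\ker(M_W)$ works; if $M_W$ is indefinite, I would take orthogonal eigenvectors $v_+,v_-$ for a positive and a negative eigenvalue and choose a nonzero combination $x=\alpha v_++\beta v_-$ solving $\alpha^2 v_+^\top M_W v_+ + \beta^2 v_-^\top M_W v_-=0$, which is possible precisely because the two quadratic terms have opposite signs; and if $M_W=0$ every vector is isotropic.

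It remains to treat the case where $M_W$ is sign-definite. If $M_W\succ0$, then $c=\ip{M_W,X_W}>0$ since $X_W\succ0$, and any sufficiently small nonzero $x\in W$ satisfies $\abs{x^\top Mx}\leq c$ by continuity, as the left-hand side scales as $\norm{x}^2$. If instead $M_W\prec0$, then $c=\ip{M_W,X_W}<0$, contradicting $c\geq0$, so this subcase cannot occur at all. In every case $\range(X)\cap\cE(X,\cM)$ contains a nonzero vector, and \cref{lem:SM_rog_iff_nonzero_envelope} delivers the claim.

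The only genuinely delicate point is the boundary situation $c=0$, where the slack in the envelope inequality vanishes and I am forced to produce an \emph{exact} isotropic vector rather than merely a small one that exploits a strictly positive slack. I expect this to be the main obstacle, and the clean resolution is the observation that $c=0$ is incompatible with $M_W$ being definite (definiteness would force $\ip{M_W,X_W}\neq0$ since $X_W\succ0$); hence in the boundary case $M_W$ is automatically non-definite and an isotropic vector is guaranteed by the construction above.
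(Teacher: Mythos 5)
Your proof is correct, but it takes a genuinely different route from the paper's. The paper first collapses the inequality to an equality: by \cref{lem:rog_iff_nontrivial_faces_rog} (facial reduction, using that $\set{M}$ is a singleton), $\cS(\set{M})$ is ROG iff $\cT(\set{M})$ is ROG, so the slack $\ip{M,X}$ is identically zero in the remaining argument. It then applies \cref{cor:TM_rog_iff_nonzero_null} and finds the isotropic vector from the spectral decomposition $X=\sum_i \lambda_i x_ix_i^\top$: since $\sum_i \lambda_i\, x_i^\top M x_i = \ip{M,X} = 0$, either some eigenvector of $X$ is already isotropic, or two of them satisfy $x_1^\top M x_1 > 0 > x_2^\top M x_2$ and the intermediate value theorem on the segment $[x_1,x_2]$ produces a nonzero isotropic vector in $\range(X)$. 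You instead stay with $\cS(\set{M})$ and \cref{lem:SM_rog_iff_nonzero_envelope}, carrying the slack $c=\ip{M,X}\geq 0$ explicitly, and your dichotomy runs on the definiteness of $M_W$ rather than on the signs of $M$'s quadratic form at eigenvectors of $X$: non-definite $M_W$ admits an exact isotropic vector (kernel vector or a combination of eigenvectors of $M_W$ with opposite-sign eigenvalues, where orthogonality kills the cross term), $M_W\succ 0$ forces $c>0$ so small vectors lie in the envelope, and $M_W\prec 0$ contradicts feasibility. The paper's reduction buys brevity---zero slack means definiteness is excluded automatically and the IVT step is a one-liner---while your version is more self-contained (it bypasses the facial-structure machinery of \cref{lem:rog_iff_nontrivial_faces_rog} entirely) and makes transparent how the inequality's slack interacts with the definiteness of the constraint matrix. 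One hygiene point: your isotropic-vector equation $\alpha^2 v_+^\top M_W v_+ + \beta^2 v_-^\top M_W v_- = 0$ silently drops the cross term $2\alpha\beta\, v_+^\top M_W v_-$; this is justified because $v_+,v_-$ are orthogonal eigenvectors of the symmetric matrix $M_W$, but it deserves a sentence in a written-up version.
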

\begin{proof}
By \cref{lem:rog_iff_nontrivial_faces_rog}, $\cS(\cM)$ is ROG if and only if $\cT(\cM)$ is ROG.
We will show that $\cT(\cM)$ is ROG by appealing to \cref{cor:TM_rog_iff_nonzero_null}.

Let $X\in \cT(\cM)$ have rank at least two.
Begin by performing a spectral decomposition $X = \sum_{i=1}^r \lambda_i x_ix_i^\top$, where $r=\rank(X)\geq 2$, the $x_i$ are orthonormal eigenvectors of $X$, and $\lambda_i>0$ for all $i\in[r]$.

If one of the eigenvectors $x_i$ is in $\cN(\cM)$, then $\range(X)\cap \cN(\cM)$ contains $x_i$ and is clearly nontrivial. 

Else, there exist distinct eigenvectors, without loss of generality $x_1$ and $x_2$, such that $\ip{M,x_1x_1^\top}>0>\ip{M,x_2x_2^\top}$. By continuity, there exists $x\in[x_1,x_2]$ such that $\ip{M,xx^\top}= 0$. Note that $x$ is nonzero as $0\notin [x_1,x_2]$ (this follows as $x_1$ and $x_2$ are orthonormal). Furthermore, $x\in\range(X)$. This concludes the proof as we have constructed a nonzero $x\in\range(X)\cap \cN(\cM)$.\qedhere
\end{proof}

Based on \cref{lem:TMprime_rog_then_SM_rog,lem:M_leq_1,cor:SM_rog_then_TM_rog}, we have the following characterization of ROG sets defined by two inequalities.
\begin{corollary}
\label{cor:M2_SM_rog_iff_TM_rog}
Suppose $\abs{\cM} = 2$, then $\cS(\cM)$ is ROG if and only if $\cT(\cM)$ is ROG.
\end{corollary}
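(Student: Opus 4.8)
The plan is to combine the three results flagged in the statement, namely \cref{lem:M_leq_1}, \cref{cor:SM_rog_then_TM_rog}, and \cref{lem:TMprime_rog_then_SM_rog}, with no new ideas needed. Write $\cM = \set{M_1,M_2}$. The forward implication is immediate: if $\cS(\cM)$ is ROG, then $\cT(\cM)$ is ROG by \cref{cor:SM_rog_then_TM_rog} applied with the full set $\cM$.

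For the reverse implication, I would suppose $\cT(\cM)$ is ROG and invoke \cref{lem:TMprime_rog_then_SM_rog}, which (since $\cM$ is finite) reduces the goal to verifying that $\cT(\cM')$ is ROG for each of the four subsets $\cM'\subseteq\cM$. I would then dispatch these one at a time. For $\cM'=\emptyset$, we have $\cT(\emptyset)=\S^n_+$, whose extreme rays are all rank-one, so it is ROG. For each singleton $\cM'=\set{M_i}$, \cref{lem:M_leq_1} gives that $\cS(\set{M_i})$ is ROG, and \cref{cor:SM_rog_then_TM_rog} then upgrades this to $\cT(\set{M_i})$ being ROG. For $\cM'=\cM$ itself, the ROG property is exactly the standing hypothesis. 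With all four subsets handled, \cref{lem:TMprime_rog_then_SM_rog} yields that $\cS(\cM)$ is ROG, completing the equivalence.

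There is no genuine obstacle here, since the content of the corollary is carried entirely by the cited lemmas. The only point requiring a moment's care is the singleton case, where the ROG property of $\cT(\set{M_i})$ is obtained indirectly: one first establishes the ROG property of $\cS(\set{M_i})$ via the single-LMI result and only then descends to the equality-constrained set. The structural reason the argument closes is that, with only two defining matrices, the subset lattice of $\cM$ is small enough that every proper subset $\cM'$ gives either the full PSD cone $\S^n_+$ or a single-LMI set $\cT(\set{M_i})$, both already known to be ROG. This is precisely what breaks down for $\abs{\cM}\geq 3$: there the intermediate subsets include genuine multi-matrix systems $\cT(\cM')$ that need not be ROG and would have to be controlled by other means, which is why the clean equivalence $\cS(\cM)\text{ ROG}\iff\cT(\cM)\text{ ROG}$ cannot be expected beyond $\abs{\cM}=2$.
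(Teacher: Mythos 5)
Your proposal is correct and follows essentially the same route as the paper, which states the corollary as a direct consequence of \cref{lem:TMprime_rog_then_SM_rog,lem:M_leq_1,cor:SM_rog_then_TM_rog} combined in exactly the way you describe: the forward direction via \cref{cor:SM_rog_then_TM_rog}, and the reverse via the four-subset case analysis feeding into \cref{lem:TMprime_rog_then_SM_rog}, with singletons handled by \cref{lem:M_leq_1} followed by \cref{cor:SM_rog_then_TM_rog}. Your closing remark about why this breaks for $\abs{\cM}\geq 3$ is also consistent with the paper's later developments.
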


The characterization given in \cref{cor:M2_SM_rog_iff_TM_rog} for the case of $\abs{\cM} = 2$ is, at the moment, unsatisfactory as we have yet to analyze when $\cT(\cM)$ is itself ROG.
Our developments in the remainder of this paper will make this implicit characterization much more explicit (see \cref{sec:necessary_conditions}).

Next, we recover a result related to the S-lemma~\cite{fradkov1979s-procedure} and a convexity theorem due to~\citet{dines1941mapping}.

\begin{lemma}\label{lem:psd_sum_rog}
Let $\cM = \set{M_1,M_2}$ and suppose there exists $(\alpha_1,\alpha_2)\neq (0,0)$ such that $\alpha_1M_1+\alpha_2M_2 \in \S^n_+$. Then, $\cS(\cM)$ is ROG.
\end{lemma}
\begin{proof}
By \cref{cor:M2_SM_rog_iff_TM_rog}, it suffices to show that $\cT(\cM)$ is ROG.
Recall also that $\cT(\cM)$ depends only on $\spann(\cM)$ (see \cref{rem:TM_finite_SM_infinite}), thus we may without loss of generality suppose $M_1 \in \S^n_+$.

Let $W\coloneqq \range(M_1)$. We claim that $X_W = 0$ for all $X\in\cT(\cM)$.
Indeed, suppose $X\in\cT(\cM)$ so that $\ip{M_1, X} = 0$. Noting that both
$M_1,X\in\S^n_+$, we deduce that $M_1 X = 0$ so that $X_W = 0$.
Then, applying \cref{lem:block_decomp} allows us to write $X = 0_W \oplus X_{W^\perp}$.

Let $\overline M_2 \coloneqq (M_2)_{W^\perp}$. Then,
\begin{align}
\label{eq:rank_preserving_isometry}
\cT(\cM) = \set{0_W \oplus X_{W^\perp}:\, \begin{array}
	{l}
	\ip{\overline M_2, X_{W^\perp}} = 0\\
	X_{W^\perp} \in \S^{W^\perp}_+
\end{array}} = 0_W \oplus \cT(\overline M_2).
\end{align}
By \cref{lem:M_leq_1} and \cref{cor:SM_rog_then_TM_rog}, $\cT(\overline M_2)$ is ROG. Then as $\cT(\cM)$ is isomorphic
to $\cT(\overline M_2)$ via the rank-preserving map $0_W \oplus X_{W^\perp}\mapsto X_{W^\perp}$, we conclude that $\cT(\cM)$ is ROG.\qedhere
\end{proof}

\begin{remark}
  \label{rem:geometric_interpretation_i}
  The condition that there exists $(\alpha_1,\alpha_2)\neq(0,0)$ such that
  $\alpha_1 M_1 + \alpha_2 M_2\in\S^n_+$ has a simple geometric interpretation.
  Specifically, this condition guarantees that the two LMEs defining
  $\cT(\set{M_1,M_2})$ only interact with each other on a single (possibly
  trivial) face of the positive semidefinite cone. Furthermore, on this face,
  the two LMEs impose the same (possibly trivial) constraint.\qedhere
\end{remark} 

\section{Sufficient conditions}
\label{sec:sufficient_conditions}

The following observation generalizes the key step in \cref{lem:psd_sum_rog}.
\begin{observation}
\label{obs:gen_psd_sum_rog_TM}
Let $\cM\subseteq\S^n$. Suppose there exists a nonzero $M\in\spann(\cM)\cap \S^n_+$. Let $W\coloneqq \range(M)$ and define $\cM_{W^\perp}\coloneqq\set{M_{W^\perp}:~M\in\cM}$. Then,
\begin{align*}
\cT(\cM) &= 0_W \oplus \cT(\cM_{W^\perp}).
\end{align*}
In particular, $\cT(\cM)$ is isomorphic to $\cT(\cM_{W^\perp})$ via the rank-preserving map $0_W \oplus Y\mapsto Y$ and $\cT(\cM)$ is ROG if and only if $\cT(\cM_{W^\perp})$ is ROG.
\end{observation}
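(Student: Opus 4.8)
The plan is to mirror the key step in the proof of \cref{lem:psd_sum_rog}, now generalizing from $\spann(\set{M_1,M_2})$ to the span of an arbitrary $\cM$. The first step is to show that every $X\in\cT(\cM)$ vanishes on $W$. Since $\cT(\cM)=\cT(\spann(\cM))$ by \cref{rem:TM_finite_SM_infinite} and $M\in\spann(\cM)$, every $X\in\cT(\cM)$ satisfies $\ip{M,X}=0$. Because both $M,X\in\S^n_+$, this forces $MX=0$ (this is exactly the implication used in \cref{lem:psd_sum_rog}), so $\range(X)\subseteq\ker(M)=W^\perp$ and hence $X_W=0$. Applying \cref{lem:block_decomp} then lets me write $X=0_W\oplus X_{W^\perp}$.

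The second step is the set identity itself. Having established that every $X\in\cT(\cM)$ has the form $0_W\oplus X_{W^\perp}$, I would observe that for any $M'\in\cM$ we have $\ip{M',\,0_W\oplus X_{W^\perp}}=\ip{M'_{W^\perp},X_{W^\perp}}$, since the range of $X$ lies entirely in $W^\perp$. Consequently the defining equations $\ip{M',X}=0$ for all $M'\in\cM$ are equivalent to $\ip{M'_{W^\perp},X_{W^\perp}}=0$ for all $M'\in\cM$, that is, to $X_{W^\perp}\in\cT(\cM_{W^\perp})$. This yields $\cT(\cM)\subseteq 0_W\oplus\cT(\cM_{W^\perp})$; the reverse containment is immediate, since any $0_W\oplus Y$ with $Y\in\cT(\cM_{W^\perp})$ satisfies $\ip{M',0_W\oplus Y}=\ip{M'_{W^\perp},Y}=0$ for all $M'\in\cM$ and is PSD.

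For the final claim, the map $0_W\oplus Y\mapsto Y$ is a linear bijection between $\cT(\cM)$ and $\cT(\cM_{W^\perp})$, and it is rank-preserving because $\rank(0_W\oplus Y)=\rank(Y)$. A rank-preserving linear isomorphism of closed convex cones carries extreme rays to extreme rays and preserves the rank of their generators, so by \cref{lem:rog_iff_extreme_rank_one} every extreme ray of $\cT(\cM)$ is rank-one if and only if every extreme ray of $\cT(\cM_{W^\perp})$ is, i.e.\ $\cT(\cM)$ is ROG if and only if $\cT(\cM_{W^\perp})$ is ROG.

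The argument is essentially routine once the first step is in place. The only point requiring genuine care is the PSD implication $\ip{M,X}=0\Rightarrow\range(X)\subseteq\ker(M)$, which is what allows the direct-sum structure to propagate cleanly and simultaneously through \emph{all} the matrices of $\cM$ when restricting to $W^\perp$; everything else is bookkeeping about direct sums and the rank-preserving isomorphism.
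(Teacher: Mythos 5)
Your proof is correct and matches the paper's intended argument: the paper treats this as an observation whose justification (spelled out in the remark following it and in the proof of \cref{lem:psd_sum_rog}) is precisely your chain of reasoning---$\ip{M,X}=0$ with $M,X\in\S^n_+$ forces $MX=0$, so $\cT(\cM)$ lies in the face $0_W\oplus\S^{W^\perp}_+$, and \cref{lem:block_decomp} plus the rank-preserving isomorphism finish the claim. No gaps; the bookkeeping on the set identity and the ROG equivalence is exactly what the paper leaves implicit.
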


\begin{remark}
\cref{obs:gen_psd_sum_rog_TM} simply notes that $\cT(\cM)$ is a subset of the
face $0_W\oplus \S^{W^\perp}_+$ of the positive semidefinite cone and then applies
\cref{lem:block_decomp}. This idea is linked to facial
reduction~\cite{borwein1981regularizing,liu2018exact,pataki2013strong},
a technique which has been used previously in the literature to
simplify semidefinite programs and more general conic programs.\qedhere
\end{remark}

Applying \cref{obs:gen_psd_sum_rog_TM} repeatedly gives the following generalization of \cref{lem:psd_sum_rog} as a sufficient condition for the ROG property.

\begin{proposition}
\label{prop:gen_psd_sum_rog_SM}
Let $\cM = \set{M_1,\dots,M_k}$ for some $k\geq 2$. Suppose for all distinct indices $i,j\in[k]$, there exists $(\alpha,\beta)\neq (0,0)$ such that $\alpha M_i + \beta M_j$ is positive semidefinite. Then, $\cS(\cM)$ is ROG.
\end{proposition}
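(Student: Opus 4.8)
The plan is to follow the template of \cref{lem:psd_sum_rog}: pass from the inequality-defined cone $\cS(\cM)$ to the equality-defined cones $\cT(\cM')$, and then peel off positive semidefinite directions one dimension at a time using \cref{obs:gen_psd_sum_rog_TM}. By \cref{lem:TMprime_rog_then_SM_rog}, since $\cM$ is finite, it suffices to show that $\cT(\cM')$ is ROG for \emph{every} subset $\cM'\subseteq\cM$. The crucial and easy observation is that every such $\cM'$ inherits the hypothesis: if each pair of distinct matrices in $\cM$ admits a PSD combination, the same holds verbatim for any subfamily. Hence it is enough to establish the following self-contained claim: for every dimension $d$ and every finite family $\cF\subseteq\S^d$ such that each pair of distinct $N,N'\in\cF$ has $\alpha N+\beta N'\succeq 0$ for some $(\alpha,\beta)\neq(0,0)$, the cone $\cT(\cF)$ is ROG.

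I would prove this claim by induction on $d$. Before each inductive step I would normalize $\cF$ by discarding any zero matrix and any matrix proportional to one already present; this changes neither $\cT(\cF)$ nor $\spann(\cF)$ nor the pairwise hypothesis, and leaves the matrices nonzero and pairwise non-proportional. If $\abs{\cF}\leq 1$, the claim follows from $\cT(\emptyset)=\S^d_+$ together with \cref{lem:M_leq_1} and \cref{cor:SM_rog_then_TM_rog} (the base $d=0$ is trivial, since $\S^0=\set{0}$). Otherwise, pick distinct $N_1,N_2\in\cF$ and $(\alpha,\beta)\neq(0,0)$ with $M\coloneqq\alpha N_1+\beta N_2\succeq 0$. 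Non-proportionality forces $M\neq 0$, so $M$ is a nonzero element of $\spann(\cF)\cap\S^d_+$. Setting $W\coloneqq\range(M)$ with $\dim W\geq 1$, \cref{obs:gen_psd_sum_rog_TM} reduces the ROG property of $\cT(\cF)$ to that of $\cT(\cF_{W^\perp})$, a finite family living in $\S^{W^\perp}$ with $\dim W^\perp\leq d-1$.

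The one point requiring verification is that the pairwise hypothesis descends to the restricted family $\cF_{W^\perp}$. For distinct $N_i,N_j$, restriction is linear, so $\alpha(N_i)_{W^\perp}+\beta(N_j)_{W^\perp}=(\alpha N_i+\beta N_j)_{W^\perp}$, and this is PSD because the compression of a PSD matrix to a subspace is PSD. Thus $\cF_{W^\perp}$ satisfies the hypothesis in the lower-dimensional space $\S^{W^\perp}$, the induction hypothesis gives that $\cT(\cF_{W^\perp})$ is ROG, and therefore $\cT(\cF)$ is ROG. Applying the claim to each $\cM'\subseteq\cM$ and invoking \cref{lem:TMprime_rog_then_SM_rog} then yields that $\cS(\cM)$ is ROG.

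I expect the only real obstacle here to be bookkeeping rather than anything conceptual. The hypothesis by itself produces a PSD combination $M$ that could be the zero matrix (precisely when two matrices are proportional), which would stall the dimension reduction; the normalization step is exactly what rules this out. The remaining care is in checking that restriction to $W^\perp$ preserves finiteness and the pairwise PSD-combination property, so that the induction closes cleanly on the ambient dimension.
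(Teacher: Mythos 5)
Your proof is correct and takes essentially the same route as the paper's: both reduce to showing $\cT(\cM')$ is ROG for every $\cM'\subseteq\cM$ via \cref{lem:TMprime_rog_then_SM_rog}, repeatedly peel off PSD directions with \cref{obs:gen_psd_sum_rog_TM} using the key observation that the pairwise PSD-combination hypothesis survives restriction to $W^\perp$ (restriction is linear and compression preserves positive semidefiniteness), and terminate with the single-LME result \cref{lem:M_leq_1}. The only difference is bookkeeping: you organize the peeling as an induction on ambient dimension with a normalization step discarding zero and proportional matrices, whereas the paper iterates until the restricted span contains no nonzero PSD matrix and then argues that this span must be at most one-dimensional.
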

\begin{proof}
By \cref{lem:TMprime_rog_then_SM_rog,lem:M_leq_1}, it suffices to show that $\cT(\cM')$ is ROG for every $\cM'\subseteq \cM$ with size at least two.

Let $\cM'\subseteq \cM$.
Consider repeatedly applying \cref{obs:gen_psd_sum_rog_TM} to get a chain of subspaces $W_1 \subset W_2 \subset \dots\subset W$ such that
\begin{align*}
\cT(\cM') = 0_{W_1} \oplus \cT(\cM'_{W_1^\perp}) = 0_{W_2} \oplus \cT(\cM'_{W_2^\perp}) = \dots = 0_W \oplus \cT(\cM'_{W^\perp}).
\end{align*}
We will repeat this process until $\spann(\cM'_{W^\perp})\cap \S^{W^\perp}_+ = \set{0}$. This process necessarily terminates as the subspaces $W_i$ strictly increase in dimension. Let $\overline M_i \coloneqq (M_i)_{W^\perp}$ and $\overline{\cM'}\coloneqq \set{\overline M_i:\, M_i \in \cM'}$.

We claim that $\dim(\spann(\overline{\cM'})) \leq 1$. Suppose otherwise and let $M_i, M_j\in\cM'$ such that $\overline M_i$ and $\overline M_j$ are independent. By assumption, there exists $(\alpha,\beta)\neq (0,0)$ such that $\alpha M_i+\beta M_j$ is positive semidefinite. Then,
\begin{align*}
\alpha \overline M_i + \beta \overline M_j = (\alpha M_i + \beta M_j)_{W^\perp}
\end{align*}
is positive semidefinite. Furthermore, this linear combination is nonzero by independence of $\overline M_i$ and $\overline M_j$. This contradicts the assumption that $\spann(\overline{\cM'})\cap\S^{W^\perp}_+ = \set{0}$.

Note that $\cT(\cM')$ is isomorphic to $\cT(\overline{\cM'})$ via the rank-preserving map $0_W \oplus X_{W^\perp} \mapsto X_{W^\perp}$. Furthermore, by \cref{rem:TM_finite_SM_infinite} and \cref{lem:M_leq_1}, we have that $\cT(\overline{\cM'})$ is ROG. We conclude that $\cT(\cM)$ is ROG.
\qedhere
\end{proof}
Intuitively, the conditions in this proposition have a similar geometric
interpretation to the conditions in \cref{lem:psd_sum_rog} (see
\cref{rem:geometric_interpretation_i}). Specifically, the proof shows that for any
$\cM'\subseteq\cM$ of size at least two, there exists a subspace
$W\subseteq\R^n$ such that $\cT(\cM')$ is contained in the face $0_W\oplus
\S^{W^\perp}_+$ of the positive semidefinite cone. Furthermore, on this
face, the LMEs in $\cM'$ all impose the same constraint.

Next, we present a new sufficient condition for the ROG property suggested by \cref{lem:SM_rog_iff_nonzero_envelope,rem:NM_EXM_rel}.

\begin{theorem}
\label{thm:ab_suffices}
Suppose $\cM = \set{\Sym(ab^\top):\, b\in \cB}$ for some $a\in\R^n$ and $\cB\subseteq\R^n$. Then, for every positive semidefinite $X$ of rank at least two, we have $\range(X)\cap\cN(\cM)\neq \set{0}$. In particular, $\cS(\cM)$ is ROG.
\end{theorem}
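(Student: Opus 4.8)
The plan is to exploit the characterization of the ROG property through the quadratic variety $\cN(\cM)$, via \cref{lem:SM_rog_iff_nonzero_envelope} together with the inclusion $\cN(\cM) \subseteq \cE(X,\cM)$ recorded in \cref{rem:NM_EXM_rel}. The crucial starting observation is the algebraic identity
\begin{align*}
x^\top \Sym(ab^\top) x = (a^\top x)(b^\top x),\qquad\forall x\in\R^n,\ b\in\cB,
\end{align*}
which follows by expanding $\Sym(ab^\top) = (ab^\top + ba^\top)/2$. Consequently $\cN(\cM) = \set{x\in\R^n:\, (a^\top x)(b^\top x) = 0,\,\forall b\in\cB}$, and in particular the hyperplane $a^\perp$ is contained in $\cN(\cM)$: if $a^\top x = 0$ then every product $(a^\top x)(b^\top x)$ vanishes regardless of $b$.

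Next I would dispose of the degenerate case $a = 0$, where $\cM\subseteq\set{0}$ and $\cN(\cM) = \R^n$, so the claim is immediate; otherwise assume $a\neq 0$, so that $a^\perp$ is a subspace of dimension $n-1$. For any positive semidefinite $X$ with $\rank(X)\geq 2$, the subspace $\range(X)$ has dimension at least two, and a dimension count in $\R^n$ gives
\begin{align*}
\dim(\range(X)\cap a^\perp) \geq \dim(\range(X)) + \dim(a^\perp) - n \geq 2 + (n-1) - n = 1.
\end{align*}
Hence $\range(X)\cap a^\perp$ contains a nonzero vector, and since $a^\perp\subseteq\cN(\cM)$ this vector lies in $\range(X)\cap\cN(\cM)$. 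This proves the main assertion that $\range(X)\cap\cN(\cM)\neq\set{0}$ for every positive semidefinite $X$ of rank at least two.

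For the ``in particular'' conclusion, I would invoke \cref{lem:SM_rog_iff_nonzero_envelope}: it suffices to show that every nonzero $X\in\cS(\cM)$ satisfies $\range(X)\cap\cE(X,\cM)\neq\set{0}$. By \cref{rem:rank_at_least_two} the rank-one case is automatic, and for $\rank(X)\geq 2$ the argument above produces a nonzero vector in $\range(X)\cap\cN(\cM)$, which lies in $\range(X)\cap\cE(X,\cM)$ by the inclusion $\cN(\cM)\subseteq\cE(X,\cM)$ of \cref{rem:NM_EXM_rel}. Therefore $\cS(\cM)$ is ROG.

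I expect no serious obstacle here: the entire argument rests on the factorization $x^\top\Sym(ab^\top)x = (a^\top x)(b^\top x)$, which reveals that the hyperplane $a^\perp$ already sits inside the variety $\cN(\cM)$ irrespective of the choice of $\cB$. Once this is seen, the proof collapses to the elementary fact that any subspace of dimension at least two must meet a hyperplane nontrivially. The only points needing minor care are the degenerate case $a = 0$ and citing the reduction chain that links $\cN(\cM)$ back to the ROG property of $\cS(\cM)$.
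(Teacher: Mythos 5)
Your proposal is correct and follows essentially the same route as the paper's proof: the key observation that $a^\perp\subseteq\cN(\cM)$, the dimension count showing $\range(X)$ must meet this hyperplane nontrivially when $\rank(X)\geq 2$, and the appeal to \cref{lem:SM_rog_iff_nonzero_envelope} via the inclusion $\cN(\cM)\subseteq\cE(X,\cM)$. Your extra care with the degenerate case $a=0$ and the explicit citation of \cref{rem:rank_at_least_two} are harmless refinements of the same argument.
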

\begin{proof}
For any $v\in a^\perp$, we have $v^\top \Sym(ab^\top)v = v^\top ab^\top v = 0$. We deduce that $a^\perp\subseteq\cN(\cM)$, i.e., $\cN(\cM)$ contains a vector space of codimension one.

Let $X$ be a positive semidefinite matrix with rank at least two.
As $\dim(\range(X)) = \rank(X)$, we see that $\range(X)\cap \cN(\cM)$ must contain a vector space of dimension at least one. In particular, $\range(X)\cap \cE(X,\cM)\supseteq \range(X)\cap \cN(\cM)$ and is nonempty. Lemma~\ref{lem:SM_rog_iff_nonzero_envelope} then implies that $\cS(\cM)$ is ROG.\qedhere
\end{proof}

We list two immediate corollaries of Theorem~\ref{thm:ab_suffices}.

\begin{corollary}\label{cor:cone_c_suffices}
Let $K\subseteq \R^n$ be any closed convex cone and consider an arbitrary vector $c\in\R^n$. Then, the set $\set{X\in\bb S^n_+:~ Xc\in K}$ is ROG.
\end{corollary}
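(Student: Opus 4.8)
The plan is to reduce Corollary~\ref{cor:cone_c_suffices} to Theorem~\ref{thm:ab_suffices} by rewriting the constraint $Xc\in K$ as a (possibly infinite) family of homogeneous LMIs of the special form $\Sym(ab^\top)$ required by the theorem. The key observation is that for a closed convex cone $K$, membership $Xc\in K$ is equivalent to $\ip{y, Xc}\geq 0$ for all $y\in K^*$, the dual cone of $K$. So first I would express the set of interest as $\cS(\cM)$ for an appropriate $\cM$.

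Next I would massage $\ip{y,Xc}$ into the inner-product-against-$X$ form. Since $\ip{y,Xc} = y^\top X c = \ip{yc^\top, X} = \ip{\Sym(yc^\top), X}$ (the last equality because $X$ is symmetric, so only the symmetric part of $yc^\top$ matters), I can write
\begin{align*}
\set{X\in\S^n_+:\, Xc\in K} = \set{X\in\S^n_+:\, \ip{\Sym(cy^\top), X}\geq 0,\,\forall y\in K^*} = \cS(\cM),
\end{align*}
where $\cM = \set{\Sym(cy^\top):\, y\in K^*}$. This is exactly of the form demanded by Theorem~\ref{thm:ab_suffices} with the fixed vector $a = c$ and the index set $\cB = K^*$. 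Applying the theorem immediately yields that $\cS(\cM)$ is ROG, completing the proof.

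The steps I expect to be routine are the dual-cone characterization $Xc\in K \iff \ip{y,Xc}\geq 0\ \forall y\in K^*$ (valid since $K$ is closed and convex, so $K=(K^*)^*$) and the symmetrization identity. The only point requiring a little care is that Theorem~\ref{thm:ab_suffices} is stated with $\cM = \set{\Sym(ab^\top):\, b\in\cB}$ having the fixed vector $a$ on the \emph{left}; here the fixed vector $c$ ends up on the right of $\Sym(cy^\top)$, but since $\Sym(cy^\top) = \Sym(yc^\top)$ these coincide, so I would just note $a=c$ and $\cB = K^*$ to match the hypothesis exactly. There is no substantive obstacle: the corollary is a direct specialization once the constraint is rephrased via duality, and the ROG conclusion is inherited verbatim from Theorem~\ref{thm:ab_suffices}.
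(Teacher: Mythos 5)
Your proposal is correct and is essentially identical to the paper's proof: the paper also defines $\cM = \set{\Sym(cb^\top):\, b\in K^*}$, observes $\set{X\in\S^n_+:\, Xc\in K} = \cS(\cM)$, and invokes Theorem~\ref{thm:ab_suffices}. You merely spell out the bidual identity $K=(K^*)^*$ and the symmetrization step that the paper leaves implicit.
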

\begin{proof}
Define $\cM \coloneqq \set{\Sym(cb^\top):\, b\in K^*}$ where $K^*$ is the dual cone of $K$. Then $\set{X\in\S^n_+:\, Xc\in K} = \cS(\cM)$, whence \cref{thm:ab_suffices} implies the result.\qedhere
\end{proof}

\begin{corollary}\label{cor:ac_bc_suffices}
Let $a,b,c\in\R^n$. Then the set $\set{X\in\bb S^n_+:~ a^\top Xc\geq 0,\, b^\top X c\geq 0}$ is ROG.
\end{corollary}

By applying Lemma~\ref{lem:TMprime_rog_then_SM_rog} once more, we next give a sufficient condition which is not covered by Theorem~\ref{thm:ab_suffices}.

\begin{theorem}\label{thm:ab_ac_bc_suffices}
Let $a,b,c\in\R^n$. Then the set $\set{X\in\S^n_+:\, a^\top Xb \geq 0,\, b^\top Xc \geq 0,\,a^\top Xc\geq 0}$ is ROG.
\end{theorem}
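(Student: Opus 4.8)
The plan is to reduce to linear matrix \emph{equalities} via \cref{lem:TMprime_rog_then_SM_rog}. Set $M_1\coloneqq\Sym(ab^\top)$, $M_2\coloneqq\Sym(bc^\top)$, $M_3\coloneqq\Sym(ac^\top)$, and $\cM\coloneqq\set{M_1,M_2,M_3}$; since $\ip{\Sym(ab^\top),X}=a^\top X b$ (and likewise for the other two products), the set in the statement is exactly $\cS(\cM)$. By \cref{lem:TMprime_rog_then_SM_rog} it then suffices to show that $\cT(\cM')$ is ROG for every $\cM'\subseteq\cM$.

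I would first handle every $\cM'$ with $\abs{\cM'}\le 2$. The empty set gives $\cT(\emptyset)=\S^n_+$, which is ROG. For the nonempty proper subsets, the point is that any two of $M_1,M_2,M_3$ share a common vector once we use $\Sym(uv^\top)=\Sym(vu^\top)$: indeed $\set{M_1,M_3}=\set{\Sym(ab^\top),\Sym(ac^\top)}$, $\set{M_1,M_2}=\set{\Sym(ba^\top),\Sym(bc^\top)}$, and $\set{M_2,M_3}=\set{\Sym(cb^\top),\Sym(ca^\top)}$. Each of these, as well as each singleton, has the form $\set{\Sym(a'd^\top):\,d\in\cB}$ demanded by \cref{thm:ab_suffices}, so $\cS(\cM')$ is ROG and hence $\cT(\cM')$ is ROG by \cref{cor:SM_rog_then_TM_rog}.

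The essential case, which is not covered by \cref{thm:ab_suffices}, is the full set $\cM'=\cM$. Here I would invoke \cref{cor:TM_rog_iff_nonzero_null} together with \cref{rem:rank_at_least_two}, reducing to showing that every $X\in\cT(\cM)$ with $\rank(X)\ge 2$ has $\range(X)\cap\cN(\cM)\neq\set{0}$. Since $x^\top M_1 x=(a^\top x)(b^\top x)$ and similarly for $M_2,M_3$, a short computation shows that $x\in\cN(\cM)$ exactly when at most one of $a^\top x,b^\top x,c^\top x$ is nonzero, i.e.\ $\cN(\cM)=\set{b,c}^\perp\cup\set{a,c}^\perp\cup\set{a,b}^\perp$. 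Writing $W\coloneqq\range(X)$, if $\dim(W)\ge 3$ then the map $x\mapsto(b^\top x,c^\top x)$ restricted to $W$ has a kernel of dimension at least $\dim(W)-2\ge 1$, furnishing a nonzero vector of $W\cap\set{b,c}^\perp\subseteq\range(X)\cap\cN(\cM)$.

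The remaining case $\rank(X)=2$ is the main obstacle, and the only place the equalities defining $\cT(\cM)$ are genuinely used. The conditions $a^\top X b=b^\top X c=a^\top X c=0$ state that the projections $a_W,b_W,c_W$ of $a,b,c$ onto $W$ are pairwise orthogonal with respect to the positive definite form $X_W$ on $W$. As pairwise orthogonal nonzero vectors for a positive definite form are linearly independent and $\dim(W)=2$, at least one of $a_W,b_W,c_W$ must vanish; say $c_W=0$, so that $c^\top x=0$ for all $x\in W$. Choosing any nonzero $x\in W$ with $a^\top x=0$ (possible since $\dim(W)=2$) gives $x\in W\cap\set{a,c}^\perp\subseteq\range(X)\cap\cN(\cM)$, which finishes this case. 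Feeding all subsets back into \cref{lem:TMprime_rog_then_SM_rog} then yields that $\cS(\cM)$ is ROG.
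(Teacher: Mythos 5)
Your proof is correct, and its scaffolding is exactly the paper's: reduce to $\cT(\cM')$ for all $\cM'\subseteq\cM$ via \cref{lem:TMprime_rog_then_SM_rog}, dispatch the subsets of size at most two through \cref{thm:ab_suffices} and \cref{cor:SM_rog_then_TM_rog} (the paper cites \cref{cor:ac_bc_suffices}, which packages the same shared-vector observation), and certify the full set via \cref{cor:TM_rog_iff_nonzero_null} with the same description $\cN(\cM)=\set{a,b}^\perp\cup\set{a,c}^\perp\cup\set{b,c}^\perp$. Where you genuinely diverge is the witness construction for $\cM'=\cM$. You split on the rank: a dimension count when $\rank(X)\geq 3$, and, for $\rank(X)=2$, the observation that $a_W,b_W,c_W$ are pairwise orthogonal under the positive definite form $X_W$ on $W=\range(X)$ and hence cannot all be nonzero. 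The paper needs no case split: if $Xa=Xb=Xc=0$ then $\range(X)\subseteq\set{a,b,c}^\perp\subseteq\cN(\cM)$; otherwise, say $y\coloneqq Xa\neq 0$, and the defining equalities $b^\top Xa=c^\top Xa=0$ say precisely that $y\in\set{b,c}^\perp$, while $y\in\range(X)$ holds for free, so $y$ itself is the desired nonzero vector at every rank. Since $Xa=X a_W$ and $X$ is injective on $W$, one has $Xa=0$ if and only if $a_W=0$, so the paper's dichotomy is the same as yours but is resolved without ever invoking the rank bound: whenever some projection, say $a_W$, is nonzero, the vector $Xa$ already lies in $\range(X)\cap\set{b,c}^\perp$. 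What your route buys is a geometric explanation of why rank two is the critical case (three $X$-orthogonal nonzero directions cannot fit in a two-dimensional range); what the paper's buys is brevity and rank-uniformity.
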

\begin{proof}
Let $\cM = \set{\Sym(ab^\top), \Sym(ac^\top),\Sym(bc^\top)}$.
By Lemma~\ref{lem:TMprime_rog_then_SM_rog} and Corollary~\ref{cor:ac_bc_suffices}, it suffices to show that $\cT(\cM)$ is ROG.

We will show that $\cT(\cM)$ is ROG by appealing to Corollary~\ref{cor:TM_rog_iff_nonzero_null}.
Let $X\in \cT(\cM)$ have rank at least two.

Note that $\cN(\Sym(ab^\top))=a^\perp \cup b^\perp$. 
Hence, 
\[
\cN(\cM) 
=\left(a^\perp \cup b^\perp \right) \cap \left(a^\perp \cup c^\perp \right) \cap \left(b^\perp \cup c^\perp \right)
=\set{a,b}^\perp\cup\set{a,c}^\perp \cup\set{b,c}^\perp.
\]
If $Xa = Xb = Xc = 0$, then $\range(X)\subseteq \set{a,b,c}^\perp$ and thus $\range(X) \cap \cN(\cM) = \range(X)$ is clearly nontrivial.
Else, without loss of generality suppose $y = Xa \neq 0$. Because $X\in \cT(\cM)$, we have $b^\top y = c^\top y = 0$, and thus $y\in\cN(\cM)$. Noting that $y\neq0$ and $y\in\range(X)$, we have concluded $0\neq y \in \range(X) \cap \cN(\cM)$ as desired.\qedhere
\end{proof}
\begin{remark}
\label{rem:copositive_3}
By picking $n = 3$ and $\set{a,b,c} = \set{e_1,e_2,e_3}$ in
Theorem~\ref{thm:ab_ac_bc_suffices}, we recover the well-known fact that the set of doubly
nonnegative matrices (i.e., the set of matrices which are both entry-wise nonnegative and
positive semidefinite) in $\S^3$ is ROG.
In particular, this states that $X\in\S^3$ is doubly nonnegative if and only if it can be
written as $X = \sum_{i} x_ix_i^\top$ where $x_i\in\R^3$ are each entry-wise
nonnegative. In other words, the set of doubly nonnegative matrices and the set
of completely positive matrices in $\S^3$ coincide.
\qedhere
\end{remark}

\begin{remark}\label{rem:chordal_nonexample}
A graph $G=(V,E)$ is \emph{chordal} if every minimal cycle has at most 3 edges.
It is well-known that the set of positive semidefinite matrices with a fixed chordal support is ROG~\cite{agler1988positive,paulsen1989schur,grone1984positive}. Specifically, if $G = ([n], E)$ is a chordal graph containing all self-loops, then
\begin{align}
\label{eq:psd_support_set}
\set{X\in\S^n_+:\, X_{i,j} = 0 ,\,\forall (i,j)\notin E}
\end{align}
is ROG.

Unfortunately, the set in \eqref{eq:psd_support_set} does not necessarily remain ROG when the equality constraints are replaced with inequality constraints. Using our toolset, we illustrate this point below with an example. From this point of view, \cref{thm:ab_ac_bc_suffices} and \cref{rem:copositive_3} highlight a special chordal graph for which the inequality version of the set is also ROG.

Consider the path graph on four vertices with all self-loops. We will show that the following set is not ROG:
\begin{align*}
\cS = \set{X\in\S^4_+:\, \begin{array}
	{l}
	X_{1,2} \geq 0\\
	X_{2,3} \geq 0\\
	X_{3,4} \geq 0
\end{array}}.
\end{align*}
We will apply \cref{lem:SM_rog_iff_nonzero_envelope} to show that $\cS$ is not ROG.
Let $\cM = \set{\Sym(e_1e_2^\top),\Sym(e_2e_3^\top),\Sym(e_3e_4^\top)}$ so that $\cS = \cS(\cM)$.
Let $x = (1,\, 0,\, 1,\, 1)^\top$ and
$y = (0,\, 1,\, 1,\, -1)^\top$.
Note that the following rank-two matrix
\begin{align*}
X \coloneqq xx^\top + yy^\top = \begin{pmatrix}
	1 & 0 & 1 & 1\\
	0 & 1 & 1 & -1\\
	1 & 1 & 2 & 0\\
	1 & -1 & 0 & 2
\end{pmatrix}
\end{align*}
satisfies $X\in \cS$. We compute
\begin{align*}
\range(X) \cap \cE(X,\cM) &= \spann\set{x,y} \cap \set{z\in \R^4:\, \begin{array}
	{l}
	z_1z_2 = 0\\
	\abs{z_2z_3} \leq 1\\
	z_3z_4 = 0
\end{array}}.
\end{align*}
Let $z\in\range(X) \cap \cE(X,\cM)$. Then, writing $z= \alpha x + \beta y = (\alpha,\,\beta,\, \alpha+\beta,\, \alpha-\beta)^\top$, we deduce that $0=z_1z_2 = \alpha\beta $ and $0=z_3z_4=\alpha^2 - \beta^2$ so that $\alpha=\beta = 0$. Thus, $\range(X)\cap\cE(X,\cM)=\set{0}$.\qedhere
\end{remark}

Finally, we show how our results can be used to recover a result due to \citet{sturm2003cones}; see also \cite[Section 6.1]{burer2015gentle}. Let $\L^n\subseteq\R^n$ denote the second order cone (SOC)
\begin{align*}
\L^n\coloneqq \set{x=(y,t)\in\R^{n-1}\times\R:\, \norm{y}_2\leq t}.
\end{align*}
Defining $L \coloneqq \Diag(-1,\dots,-1,1)\in\S^n$, we can write $\L^n = \set{x\in\R^n:\, x^\top L x\geq 0,\, x_{n}\geq 0}$.

\begin{lemma}
\label{lem:one_cap}
Let $c\in\R^n$ and define
\begin{align*}
\cS \coloneqq \set{X\in\S^n_+:\, \begin{array}
	{l}
	Xc \in \L^n\\
	\ip{L,X} \geq 0
\end{array}}.
\end{align*}
Then, $\cS$ is ROG.
\end{lemma}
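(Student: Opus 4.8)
The plan is to apply \cref{lem:SM_rog_iff_nonzero_envelope} directly to the set $\cM \coloneqq \set{\Sym(cb^\top):\, b\in\L^n}\cup\set{L}$, which satisfies $\cS(\cM)=\cS$ because $\L^n$ is self-dual (so $Xc\in\L^n$ iff $b^\top Xc\geq 0$ for all $b\in\L^n$, and $\ip{\Sym(cb^\top),X}=b^\top Xc$). Note that $\cS$ is the intersection of $\set{X\in\S^n_+:\,Xc\in\L^n}$ and $\set{X\in\S^n_+:\,\ip{L,X}\geq 0}$, which are ROG by \cref{cor:cone_c_suffices} and \cref{lem:M_leq_1} respectively; but intersections of ROG cones need not be ROG, so I would work with the envelope lemma rather than intersecting. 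By \cref{rem:rank_at_least_two} it suffices to produce, for each $X\in\cS$ of rank at least two, a nonzero $x\in\range(X)\cap\cE(X,\cM)$. The first step is to rewrite the envelope condition: the inequalities from $\cM_1\coloneqq\set{\Sym(cb^\top):\,b\in\L^n}$ read $\abs{(c^\top x)(b^\top x)}\leq b^\top Xc$ for all $b\in\L^n$, and splitting the absolute value and invoking self-duality of $\L^n$ once more shows this is equivalent to the two conic containments $Xc\pm(c^\top x)\,x\in\L^n$. The inequality from $L$ is simply $\abs{x^\top Lx}\leq\ip{L,X}$. Thus I need a nonzero $x\in\range(X)$ with $Xc\pm(c^\top x)\,x\in\L^n$ and $\abs{x^\top Lx}\leq\ip{L,X}$.

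I would then split into cases according to $\ip{L,X}$. If $\ip{L,X}>0$, a dimension count gives $\dim(\range(X)\cap c^\perp)\geq\rank(X)-1\geq 1$, so I may pick a nonzero $x_0\in\range(X)\cap c^\perp$; since $c^\top x_0=0$, the conic containments collapse to $Xc\in\L^n$, which holds, while $\abs{(sx_0)^\top L(sx_0)}=s^2\abs{x_0^\top Lx_0}\leq\ip{L,X}$ for $s>0$ small. Hence $x=sx_0$ is a valid witness.

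The remaining case is $\ip{L,X}=0$, equivalently $X\in\cT(\set{L})$, where the $L$-inequality forces $x^\top Lx=0$. Here I would further distinguish whether $Xc$ lies on the boundary ray of $\L^n$. If $Xc\in\bd(\L^n)\setminus\set{0}$, the key observation is that $Xc$ is itself the witness: $(Xc)^\top L(Xc)=0$ since $Xc$ lies on the boundary of the Lorentz cone, and $Xc\pm(c^\top Xc)(Xc)=(1\pm c^\top Xc)\,Xc$, so after rescaling to $x=sXc$ with $s>0$ small both factors $(1\pm s^2 c^\top Xc)$ are nonnegative (using $c^\top Xc\geq 0$) and both containments hold. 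If instead $Xc=0$ or $Xc\in\inter(\L^n)$, I would use that $\cT(\set{L})$ is ROG (\cref{lem:M_leq_1,cor:SM_rog_then_TM_rog}) together with \cref{cor:TM_rog_iff_nonzero_null} to extract a nonzero $L$-isotropic vector $x\in\range(X)\cap\cN(\set{L})$; this already gives $x^\top Lx=0$, and after rescaling the containments $Xc\pm(c^\top x)x\in\L^n$ hold either trivially (when $Xc=0$, since then $\range(X)\subseteq c^\perp$ forces $c^\top x=0$) or because $Xc\in\inter(\L^n)$ absorbs the small perturbation $(c^\top x)x$.

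I expect the main obstacle to be the case $\ip{L,X}=0$ with $Xc\in\bd(\L^n)\setminus\set{0}$: this is precisely where neither of the two ``easy'' witnesses (an $L$-isotropic vector transverse to $c$, or a small multiple of an isotropic direction) is available in general, and one must recognize the distinguished vector $Xc$ itself as the witness. The enabling insight is the reformulation of the envelope inequalities as $Xc\pm(c^\top x)x\in\L^n$, which makes transparent both the role of self-duality and why $Xc$---being simultaneously in $\range(X)$, on the boundary cone $\set{x^\top Lx=0}$, and collinear with the perturbation it induces---closes the argument.
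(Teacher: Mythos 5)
Your proposal is correct and takes essentially the same approach as the paper's proof: the same exhaustive case analysis on $\ip{L,X}$ and the position of $Xc$ within $\L^n$, with the identical key insight that in the critical case ($\ip{L,X}=0$ and $Xc\in\bd(\L^n)\setminus\set{0}$) the vector $Xc$ itself is the witness, via the computation $Xc\pm s^2(c^\top Xc)\,Xc=(1\pm s^2 c^\top Xc)\,Xc\in\L^n$. The only difference is packaging: the paper argues by contradiction on extreme rays, dispatching the easy cases through \cref{lem:compactness_extreme_for_tight,lem:extreme_for_constraints_in_range} to invoke \cref{cor:cone_c_suffices,lem:M_leq_1}, whereas you construct explicit witnesses for \cref{lem:SM_rog_iff_nonzero_envelope} in every case---but the underlying perturbation vectors are the same.
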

\begin{proof}
We begin by rewriting $\cS$ so that we may apply
\cref{lem:compactness_extreme_for_tight}. Let $\cB$ denote a compact base of $\L^n = (\L^n)^*$.  Then,
\begin{align*}
\cS = \cS\left(\set{L}\cup \set{\Sym(cb^\top):\, b\in\cB}\right).
\end{align*}

For the sake of contradiction suppose there exists an extreme ray $\R_+X$ of $\cS$ with $\rank(X)\geq 2$.

If $\ip{L,X}>0$ then $\R_+X$ is an extreme ray of
$\cS(\set{\Sym(cb^\top):\, b\in\cB}) = \set{X\in\S^n_+:\, Xc\in\L^n}$,
contradicting \cref{cor:cone_c_suffices}.
If $Xc\in\inter(\L^n)$ then $\R_+X$ is an extreme ray of
$\cS(\set{L}) = \set{X\in\S^n_+: \ip{X,L}\geq 0}$,
contradicting \cref{lem:M_leq_1}.
Finally, suppose $Xc = 0$ and let $W = \range(X)\subseteq c^\perp$.
Note that $X_W$ and $X$ have the same rank and
$\Sym(cb^\top)_{W} = 0$ for all $b\in\cB$.
Then, by \cref{lem:extreme_for_constraints_in_range}, we have that $\R_+(X_{W})$ is an extreme ray of $\cS(\set{L_{W}})$, contradicting \cref{lem:M_leq_1}.

In the remainder of the proof, we will assume that $\ip{L,X} = 0$ and $y\coloneqq Xc$ is a nonzero element in $\bd(\L^n)$, i.e., $y^\top L y = 0$.

Then, for all $\epsilon>0$ small enough, we have $X \pm \epsilon yy^\top\succeq 0$, $\ip{L,X\pm \epsilon yy^\top} = \ip{L,X}= 0$, and $(X\pm\epsilon yy^\top)c = (1\pm \epsilon y^\top c)y\in\L^n$. This contradicts the assumption that $\R_+ X$ is extreme. Thus, all extreme rays $\R_+X$ of $\cS$ have $\rank(X)\leq 1$.\qedhere
\end{proof}

\section{Necessary conditions} \label{sec:necessary_conditions}

In this section, we give a complete characterization of ROG cones defined by two LMIs.
\begin{theorem}
\label{thm:two_LMI_NS}
Let $\cM = \set{M_1,M_2}$. Then, $\cS(\cM)$ is ROG if and only if one of the following holds:
\begin{enumerate}[(i)]
	\item there exists $(\alpha_1,\alpha_2)\neq (0,0)$ such that $\alpha_1M_1+\alpha_2M_2 \in\S^n_+$, or
	\item there exists $a,b,c\in\R^n$ such that $M_1 = \Sym(ac^\top)$ and $M_2 = \Sym(bc^\top)$.
\end{enumerate}
\end{theorem}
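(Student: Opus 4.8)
The sufficiency direction is already essentially done: condition (i) is exactly the hypothesis of \cref{lem:psd_sum_rog}, which gives that $\cS(\cM)$ is ROG, while condition (ii) is a special case of \cref{cor:ac_bc_suffices} (taking the two forms $\ip{\Sym(ac^\top),X}\geq 0$ and $\ip{\Sym(bc^\top),X}\geq 0$), which is again ROG. So the entire content of the theorem lies in the necessity direction, and the plan is to assume $\cS(\cM)$ is ROG and that (i) fails, and then deduce (ii).

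The plan is to work through $\cT(\cM)$ rather than $\cS(\cM)$: by \cref{cor:M2_SM_rog_iff_TM_rog}, $\cS(\cM)$ is ROG if and only if $\cT(\cM)$ is ROG, so I may assume $\cT(\set{M_1,M_2})$ is ROG. The failure of (i) means that $\spann\set{M_1,M_2}\cap\S^n_+=\set{0}$; equivalently, no nonzero linear combination of $M_1,M_2$ is PSD. First I would normalize using \cref{lem:TM_rog_iff_overline_TM_rog} to restrict attention to the joint range $W=\spann(\range(M_1)\cup\range(M_2))$, so that the pencil $\set{\alpha M_1+\beta M_2}$ contains no matrix with a nontrivial kernel coming from outside its own structure. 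Since no combination is PSD (and by symmetry none is NSD), every nonzero matrix in the pencil is \emph{indefinite} (or has mixed signature), which is a strong structural constraint. The natural tool is \cref{cor:TM_rog_iff_nonzero_null}: ROGness of $\cT(\cM)$ means that for every rank-$\geq 2$ matrix $X\in\cT(\cM)$, we have $\range(X)\cap\cN(\cM)\neq\set{0}$, where $\cN(\cM)=\set{x:\, x^\top M_1 x = x^\top M_2 x = 0}$ is the common zero variety of the two quadratic forms.

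The heart of the argument is then to show that if (i) fails yet the common-zero condition of \cref{cor:TM_rog_iff_nonzero_null} holds for all feasible $X$, then $M_1$ and $M_2$ must both be of rank-two ``dyadic'' form $\Sym(ac^\top)$, $\Sym(bc^\top)$ sharing the common vector $c$. I expect to use the simultaneous structure of the pencil $\set{\alpha M_1+\beta M_2:(\alpha,\beta)\in\R^2}$: because (i) fails, each nonzero member is indefinite, and by \cref{lem:M_leq_1} each single form $\cS(\set{\alpha M_1+\beta M_2})$ is ROG on its own, so the obstruction to ROGness of the pair must come from the \emph{interaction} of the two varieties $\cN(\set{M_i})$. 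The condition (ii) says precisely that $M_1 x = 0$ whenever $c^\top x=0$ for the $a$-direction, etc.; geometrically the two forms $\Sym(ac^\top)$ and $\Sym(bc^\top)$ share a common codimension-one null space $c^\perp$ (this is the structure exploited in \cref{thm:ab_suffices}). So I would try to construct, from a putative rank-two $X$ witnessing a failure of the null-intersection condition, an explicit indefinite combination and a vector forcing the shared-$c$ structure. Concretely, I anticipate a case analysis on the ranks and signatures of $M_1,M_2$ and of small combinations, building a $2\times 2$ or $3\times 3$ block reduction (à la \cref{lem:block_decomp}) in which the failure of ROGness is exhibited by an explicit rank-two $X$ built from two null directions, unless the dyadic structure (ii) holds.

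The main obstacle I foresee is the necessity case analysis itself: ruling out \emph{every} indefinite pencil that is not of the shared-$c$ dyadic form. The delicate point is that indefiniteness of each pencil member is a global signature condition, while ROGness is a statement quantified over all rank-$\geq 2$ feasible $X$; bridging these requires producing a concrete rank-two counterexample $X$ with $\range(X)\cap\cN(\cM)=\set{0}$ whenever (ii) fails. I expect this to hinge on a canonical-form or simultaneous (near-)diagonalization argument for the pencil $\set{\alpha M_1+\beta M_2}$ — essentially a Kronecker/Weierstrass-type classification restricted by the ``no PSD combination'' hypothesis — together with the kind of explicit two-vector construction already used in \cref{rem:chordal_nonexample} (where $x,y$ are chosen so that no nonzero combination lands in the envelope). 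That concrete construction of the failing $X$, uniformly across all non-(ii) pencils, is where the real work will be.
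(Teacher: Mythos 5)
Your setup coincides with the paper's: sufficiency via \cref{lem:psd_sum_rog} (or \cref{prop:gen_psd_sum_rog_SM}) and \cref{cor:ac_bc_suffices}, the reduction to $\cT(\cM)$ via \cref{cor:M2_SM_rog_iff_TM_rog}, the restriction to the joint range via \cref{lem:TM_rog_iff_overline_TM_rog}, and the criterion of \cref{cor:TM_rog_iff_nonzero_null}. However, everything after that point in your proposal is stated as an intention (``I anticipate a case analysis\dots'', ``that concrete construction\dots is where the real work will be''), and that deferred step is precisely the entire content of the theorem: producing, whenever both (i) and (ii) fail, a rank-two $X\in\cT(\cM)$ with $\range(X)\cap\cN(\cM)=\set{0}$. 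This is a genuine gap, not a stylistic difference.

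Concretely, the paper (in \cref{thm:nec_conditions_two_LME}) needs three separate arguments organized by dimension, none of which appears in your plan. For $n=2$ (\cref{prop:nec_n=2}), failure of (i) plus the Gordan--Stiemke theorem (\cref{rem:gordan_stiemke}) yields a positive definite $X\in\cT(\cM)$, and a dimension count in $\S^2$ shows $\cT(\cM)=\R_+X$ is a rank-two extreme ray. For $n=3$ (\cref{prop:nec_n=3}), the crucial structural input is the B\'ezout-type bound of \cref{lem:N_at_most_four_lines}---if (i) and (ii) both fail then $\cN(\cM)$ is a union of at most four lines---combined with Dines' theorem (\cref{thm:dines}) to find $z$ with $(z^\top M_1z,\,z^\top M_2z)=-(w^\top M_1w,\,w^\top M_2w)$ for a $w$ avoiding the finitely many planes spanned by pairs of points of $\cN(\cM)$, and with the Carath\'eodory lemma (\cref{lem:caratheodory}) to reach the contradiction. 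For $n\geq 4$ (\cref{prop:nec_n_geq_4}), one cannot argue directly; the paper reduces to $n=3$ by a perturbation argument (using \cref{thm:dines} and \cref{lem:rank_three_lin_comb}) that builds a three-dimensional subspace $W$ on which both (i) and (ii) fail, then uses that faces of ROG cones are ROG. Your hoped-for substitute---a Kronecker/Weierstrass canonical form for the pencil---does not bypass any of this: first, the classification of symmetric pencils with no semidefinite member is itself substantial work you have not carried out; second, even in the most favorable canonical form (both $M_i$ diagonal, as in the paper's worked example $M_1=\Diag(1,-1,0)$, $M_2=\Diag(0,1,-1)$), exhibiting the failing rank-two $X$ still requires the Dines-type construction rather than an ad hoc two-vector choice as in \cref{rem:chordal_nonexample}; and third, a normal form says nothing about how to handle all $n\geq 4$ uniformly, which is why the paper's dimension-reduction step exists.
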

Note that the \emph{if} direction of \cref{thm:two_LMI_NS} is a direct consequence of the sufficient conditions identified in \cref{prop:gen_psd_sum_rog_SM} and \cref{cor:ac_bc_suffices}. Furthermore, recall from \cref{cor:M2_SM_rog_iff_TM_rog} that when $\abs{\cM} =2$, the set $\cS(\cM)$ is ROG if and only if $\cT(\cM)$ is ROG. Thus, \cref{thm:two_LMI_NS} follows as a corollary to the following necessary condition.

\begin{theorem}
\label{thm:nec_conditions_two_LME}
Let $\cM = \set{M_1,M_2}$. If $\cT(\cM)$ is ROG, then one of the following holds:
\begin{enumerate}[(i)]
	\item there exists $(\alpha_1,\alpha_2)\neq (0,0)$ such that $\alpha_1M_1+\alpha_2M_2 \in\S^n_+$, or
	\item there exists $a,b,c\in\R^n$ such that $M_1 = \Sym(ac^\top)$ and $M_2 = \Sym(bc^\top)$.
\end{enumerate}
\end{theorem}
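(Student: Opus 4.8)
The plan is to prove the contrapositive: assuming that neither (i) nor (ii) holds, I would exhibit a nonzero $X\in\cT(\cM)$ with $\rank(X)\geq 2$ and $\range(X)\cap\cN(\cM)=\set{0}$, which by \cref{cor:TM_rog_iff_nonzero_null} shows $\cT(\cM)$ is not ROG. First I would record two reformulations of the hypotheses. Since $0\in\S^n_+$, if $M_1,M_2$ were linearly dependent then (i) would hold; hence $\neg$(i) forces $M_1,M_2$ to be linearly independent and, because $\spann\set{M_1,M_2}$ is closed under negation, forces \emph{every} nonzero $N\in\spann\set{M_1,M_2}$ to be indefinite. Equivalently, the Dines joint-range cone $\set{(x^\top M_1 x,\,x^\top M_2 x):\, x\in\R^n}$ is all of $\R^2$. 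For the second hypothesis, a quadratic form vanishing on a hyperplane $d^\perp$ must equal $\Sym(df^\top)$ for some $f$; thus $\cN(\cM)$ contains a hyperplane $d^\perp$ if and only if $M_1=\Sym(df_1^\top)$ and $M_2=\Sym(df_2^\top)$, i.e.\ if and only if (ii) holds. So $\neg$(ii) is exactly the statement that $\cN(\cM)$ contains no hyperplane.

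The core of the argument is a reduction to a clean two-dimensional criterion. I would search for the witness $X$ in rank-two form $X=uu^\top+ww^\top$. If $u,w$ are chosen with \emph{antipodal} joint values, i.e.\ $(w^\top M_1 w,\,w^\top M_2 w)=-p$ where $p\coloneqq(u^\top M_1 u,\,u^\top M_2 u)\neq 0$, then $u,w$ are automatically independent and $\ip{M_j,X}=0$, so $X\in\cT(\cM)$ has rank two. Writing $v=su+tw$ and $c\coloneqq(u^\top M_1 w,\,u^\top M_2 w)$, one computes $v^\top M_j v=p_j(s^2-t^2)+2c_j\,st$; since the map $(s,t)\mapsto(s^2-t^2,\,2st)$ is onto $\R^2$ and vanishes only at the origin, the plane $\range(X)=\spann\set{u,w}$ meets $\cN(\cM)$ nontrivially if and only if the $2\times 2$ matrix $\left(\begin{smallmatrix}p_1 & c_1\\ p_2 & c_2\end{smallmatrix}\right)$ is singular. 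Hence $\range(X)\cap\cN(\cM)=\set{0}$ if and only if $p$ and $c$ are linearly independent in $\R^2$. The entire theorem therefore reduces to producing an antipodal pair $u,w$ whose value vector $p$ and cross vector $c$ are linearly independent.

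It remains to construct such a pair, and this is the step I expect to be the main obstacle. Since $\neg$(i) makes the Dines cone all of $\R^2$, antipodal pairs exist for every direction $p\neq 0$; the difficulty is arranging $c\not\parallel p$. I would reformulate the target once more: the conditions depend only on the subspace $\spann\set{M_1,M_2}$ (by \cref{rem:TM_finite_SM_infinite}), so I may freely change the pencil basis, and writing $N_p\coloneqq p_1 M_2-p_2 M_1$ one checks that $u,w$ are both $N_p$-isotropic and that $p\parallel c$ if and only if $u^\top N_p w=0$; thus I seek an $N_p$-isotropic pair that is \emph{not} $N_p$-orthogonal, where $N_p$ is indefinite by $\neg$(i). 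The plan is to first reduce to the case of trivial common kernel $\ker(M_1)\cap\ker(M_2)=\set{0}$ via \cref{lem:TM_rog_iff_overline_TM_rog}, and then run a perturbation/rigidity argument: starting from any antipodal pair, if no infinitesimal motion of $u$ within its level set (and of $w$ within its) can tilt $c$ off the line $\R p$, then the tangent-space computation forces $p_2 M_1 u-p_1 M_2 u\in\spann\set{M_1 w,\,M_2 w}$ together with the symmetric relation. I would then argue that the persistence of these relations across the whole (positive-dimensional) level sets collapses them into a single hyperplane, forcing $M_1$ and $M_2$ to share a common isotropic hyperplane—that is, forcing (ii), a contradiction. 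The delicate point, and the likely location of a case analysis (e.g.\ splitting on whether the pencil $\alpha M_1+\beta M_2$ is singular and on the ranks of $M_1,M_2$, possibly through a canonical form for the symmetric pencil), is precisely this last rigidity implication.
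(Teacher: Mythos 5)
Your reduction is correct as far as it goes: the observation that $\neg$(ii) is equivalent to $\cN(\cM)$ containing no hyperplane, the fact that any antipodal pair $u,w$ (joint values $\pm p$, $p\neq 0$) gives a rank-two $X=uu^\top+ww^\top\in\cT(\cM)$, and the determinant criterion that $\range(X)\cap\cN(\cM)=\set{0}$ if and only if $p$ and $c$ are linearly independent (via surjectivity of $(s,t)\mapsto(s^2-t^2,2st)$) are all valid, and \cref{cor:TM_rog_iff_nonzero_null} then correctly converts such a pair into a proof of non-ROG-ness. But note what this reduction actually is: since every rank-two $X\in\cT(\cM)$ decomposes into an antipodal pair, and since extreme rays of a cone cut out of $\S^n_+$ by two LMEs have rank at most two (Pataki-type bound), your target statement---``$\neg$(i) and $\neg$(ii) imply the existence of an antipodal pair with $p\not\parallel c$''---is not a lemma on the way to the theorem; it \emph{is} the theorem, restated. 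Everything you have proved is the (comparatively easy) translation; the construction you defer to the last paragraph is the entire content.

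And that construction is missing. Your rigidity plan correctly identifies the first-order stationarity conditions (e.g.\ $N_p u\in\spann\set{M_1w,M_2w}$ and its mirror), but the decisive implication---that these conditions, holding across all antipodal pairs, force a common isotropic hyperplane and hence (ii)---is asserted, not argued, and you yourself flag it as the ``delicate point'' requiring an unspecified case analysis. There is also a soft spot in the intermediate reformulation: an $N_p$-isotropic, non-$N_p$-orthogonal pair is strictly weaker than an antipodal pair with $p\not\parallel c$, because isotropy for $N_p$ only places each joint value on the \emph{line} $\R p$, and scaling cannot flip the sign of a quadratic value, so both values may land on the same ray (or vanish). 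For comparison, the paper makes the construction go through by a dimension split under the spanning assumption: $n=2$ via Gordan--Stiemke (\cref{prop:nec_n=2}), $n=3$ via the B\'ezout-type count that $\cN(\cM)$ is at most four lines (\cref{lem:N_at_most_four_lines}) combined with Dines' theorem and the Carath\'eodory lemma (\cref{lem:caratheodory}), and $n\geq 4$ via a perturbative reduction to the $n=3$ case (\cref{prop:nec_n_geq_4}). Any completed version of your argument would have to reproduce the substance of these steps; as written, the proposal does not prove the theorem.
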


\begin{remark}
  \label{rem:gordan_stiemke}
  The conic Gordan--Stiemke Theorem (see Equation 2.3 in \cite{sturm2000error}
  and its surrounding comments) implies that for any subspace $W\subseteq
\S^n$,
\begin{align*}
W \cap \S^n_+ = \set{0} \iff W^\perp \cap \S^n_{++} \neq \emptyset.
\end{align*}
In particular, applying the conic Gordan--Stiemke Theorem in the context of 
\cref{thm:nec_conditions_two_LME} we deduce that
 if $M_1,M_2$ are linearly independent, then condition (i) in \cref{thm:nec_conditions_two_LME} 
 fails if and only if $\cT(\set{M_1,M_2})$ contains a positive definite matrix.\qedhere
\end{remark}

Conditions (i) and (ii) in \cref{thm:two_LMI_NS,thm:nec_conditions_two_LME} have
simple geometric interpretations.
See \cref{rem:geometric_interpretation_i} for a geometric interpretation of (i).
We describe an interpretation of condition (ii) in \cref{thm:nec_conditions_two_LME}, i.e., in the case of two LMEs.
Condition (ii) covers the important case when the two LMEs interact in a nontrivial manner inside $\S^n_+$. Suppose for the sake of presentation that $a = e_1$, $b=e_2$, $c=e_n$. Then, \cref{cor:ac_bc_suffices} implies that
\begin{align*}
\cT(\cM) &= \conv(\set{xx^\top:\, x_1x_n = 0,\, x_2x_n = 0})\\
&= \conv\left(\conv\set{xx^\top:\, x_1 = x_2 = 0}\cup \conv\set{xx^\top:\, x_n = 0}\right)\\
&= \conv\left((0_2\oplus \S^{n-2}_+ )\cup (\S^{n-1}_+\oplus 0_1) \right).
\end{align*}
In other words, condition (ii) covers the case where $\cT(\cM)$ is the convex hull of the union of two faces of the positive semidefinite cone with a particular intersection structure.
\cref{thm:nec_conditions_two_LME} states that these are the only ways for $\cT(\cM)$ to be ROG when $\abs{\cM} = 2$.

The proof of Theorem~\ref{thm:nec_conditions_two_LME} is nontrivial and will be the focus of the remainder of the section.
Before completing this proof, let us first work out in detail a prototypical example. This example will highlight a number of the steps of our proof.

\begin{example}
Suppose $\cM = \set{M_1,M_2}$ where $M_1 = \Diag(1,-1,0)$ and $M_2 = \Diag(0,1,-1)$ so that
\begin{align*}
\cT(\cM) = \set{X\in\S^3_+:\, X_{1,1} = X_{2,2} = X_{3,3}}.
\end{align*}
We first verify that neither condition (i) nor (ii) from \cref{thm:nec_conditions_two_LME} hold. Indeed, $\alpha_1 M_1 + \alpha_2 M_2 = \Diag(\alpha_1,\alpha_2-\alpha_1, -\alpha_2)$ is positive semidefinite if and only if $(\alpha_1,\alpha_2)=(0,0)$ so that condition (i) is violated. Next, note that $2M_1 +M_2 = \Diag(2,-1,1)$ has rank three so that condition (ii) is also violated. We next demonstrate that $\cT(\cM)$ is not ROG.

Let $w \coloneqq \left( 1, 1, \sqrt{2}\right)^\top$. We claim there exists a vector $z$ such that
\begin{align*}
\begin{pmatrix}
	z^\top M_1 z\\
	z^\top M_2z
\end{pmatrix}
= - \begin{pmatrix}
	w^\top M_1 w\\
	w^\top M_2 w
\end{pmatrix}.
\end{align*}
Indeed for this example, $z = \left( -1, 1, 0 \right)^\top$ is such a vector. It is clear that $w$ and $z$ are linearly independent so that $X\coloneqq ww^\top + zz^\top$ is a rank-two matrix contained in $\cT(\cM)$. By \cref{cor:TM_rog_iff_nonzero_null}, it suffices to show that $\range(X) \cap \cN(\cM) = \set{0}$. We will write a generic element from $\range(X)$ as
$\left( \alpha-\beta, \alpha+\beta, \sqrt{2}\alpha \right)^\top$. Then
\begin{align*}
\range(X) \cap\cN(\cM) &= \set{\begin{pmatrix}\alpha-\beta \\\alpha+\beta \\ \sqrt{2}\alpha\end{pmatrix}
	:\, \begin{array}
	{l}
	(\alpha-\beta)^2=(\alpha+\beta)^2=2\alpha^2
\end{array}}.
\end{align*}
The first equality implies $\alpha\beta = 0$. The second equality then implies that $\alpha=\beta=0$. We conclude $\range(X)\cap\cN(\cM) = \set{0}$ and that $\cT(\cM)$ is not ROG.\qedhere
\end{example}

We now begin on the proof of \cref{thm:nec_conditions_two_LME}. We first make a simplifying assumption that holds without loss of generality.

\begin{lemma}
\label{lem:M_span_n}
Let $W \coloneqq \spann\left(\bigcup_{M\in\cM} \range(M)\right)$. For $M\in\cM$, let $\overline M= M_W$ denote the restriction of $M$ to $W$. 
Let $\overline\cM = \set{\overline M:\, M\in\cM}$.
Then, $\cT(\cM)$ is ROG if and only $\cT(\overline\cM)$ is ROG.
Furthermore, if $\cM = \set{M_1,M_2}$ and $\overline \cM = \set{\overline M_1,\overline M_2}$, then each of conditions (i) and (ii) in \cref{thm:nec_conditions_two_LME} hold for $\cM$ if and only if they hold for $\overline \cM$.
\end{lemma}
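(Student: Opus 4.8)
The plan is to treat the two assertions of \cref{lem:M_span_n} separately. The first assertion—that $\cT(\cM)$ is ROG if and only if $\cT(\overline\cM)$ is ROG—is literally the content of \cref{lem:TM_rog_iff_overline_TM_rog}, since $W$, the restrictions $\overline M = M_W$, and $\overline\cM$ are defined there in exactly the same way. So I would simply invoke that lemma and devote the rest of the argument to the genuinely new content: showing that conditions (i) and (ii) transfer between $\cM$ and $\overline\cM$.

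The key structural observation I would establish first is that each $M_i$ is block-diagonal with respect to $\R^n = W\oplus W^\perp$. Indeed, by the definition of $W$ we have $\range(M_i)\subseteq W$, and since $M_i$ is symmetric we have $\range(M_i) = (\ker M_i)^\perp$, so $\range(M_i)\subseteq W$ forces $W^\perp\subseteq\ker(M_i)$. Thus $M_i$ maps $W$ into $W$ and annihilates $W^\perp$, which means exactly that $M_i = \overline M_i \oplus 0_{W^\perp}$. This single identity drives both remaining verifications.

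For condition (i), I would observe that $\alpha_1 M_1 + \alpha_2 M_2 = (\alpha_1\overline M_1 + \alpha_2\overline M_2)\oplus 0_{W^\perp}$, and that a direct sum of the form $A\oplus 0_{W^\perp}$ with $A\in\S^W$ is positive semidefinite if and only if $A$ is, because its quadratic form evaluated at $x\oplus y$ equals $x^\top A x$. Hence the \emph{same} pair $(\alpha_1,\alpha_2)$ certifies (i) for $\cM$ exactly when it certifies (i) for $\overline\cM$. For condition (ii), the forward direction is by restriction: if $M_1 = \Sym(ac^\top)$ and $M_2 = \Sym(bc^\top)$, then $\overline M_i = U^\top M_i U$ gives $\overline M_1 = \Sym(a_W c_W^\top)$ and $\overline M_2 = \Sym(b_W c_W^\top)$ with the common vector $c_W\in W$, which is precisely condition (ii) for $\overline\cM$. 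The backward direction is by embedding: if $\overline M_1 = \Sym(\bar a\bar c^\top)$ and $\overline M_2 = \Sym(\bar b\bar c^\top)$ with $\bar a,\bar b,\bar c\in W$, then padding these vectors with zeros on $W^\perp$ to get $a,b,c\in\R^n$ and using $M_i = \overline M_i\oplus 0_{W^\perp}$ recovers $M_1 = \Sym(ac^\top)$ and $M_2 = \Sym(bc^\top)$ with a shared $c$.

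I do not anticipate a genuine obstacle: the whole statement collapses onto the block-diagonal identity $M_i = \overline M_i\oplus 0_{W^\perp}$ together with the elementary fact that both positive semidefiniteness and the $\Sym(\cdot\,\cdot^\top)$ form are stable under restriction to and embedding from $W$. The only point requiring a little care is to keep the vector $c$ (respectively $\bar c$) \emph{shared} between the two matrices when transferring condition (ii), so that the exact form required by \cref{thm:nec_conditions_two_LME} is preserved rather than merely some rank-at-most-two structure.
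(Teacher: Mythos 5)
Your proposal is correct and follows the paper's own route: the paper likewise disposes of the ROG equivalence by citing \cref{lem:TM_rog_iff_overline_TM_rog} and treats the transfer of conditions (i) and (ii) as immediate from the definition of $W$, which is exactly what your block-diagonal identity $M_i = \overline M_i \oplus 0_{W^\perp}$ makes precise. Your write-up simply spells out the details (stability of positive semidefiniteness and of the shared-$c$ structure under restriction/embedding) that the paper leaves implicit.
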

\begin{proof}

The first part of this statement follows immediately from \cref{lem:TM_rog_iff_overline_TM_rog}.
The last statement of the lemma follows from definition of $W$.\qedhere
\end{proof}
We will henceforth assume that $\cM$ spans $\R^n$ in the following sense.
\begin{assumption}
\label{as:M_span_n}
Assume that $\spann\left(\bigcup_{M\in\cM} \range(M)\right)=\R^n$.\qedhere
\end{assumption}

\begin{proof}
[Proof of \cref{thm:nec_conditions_two_LME}.]
By \cref{lem:M_span_n}, we may without loss of generality assume that \cref{as:M_span_n} holds.
We will split the proof of \cref{thm:nec_conditions_two_LME} into a number of cases depending on the dimension $n$.
\begin{itemize}
	\item The case $n = 1$ holds vacuously as we can set $(\alpha_1,\alpha_2)$ to either $(1,0)$ or $(-1,0)$ to satisfy (i).
	\item For $n = 2$, we will suppose condition (i) is not satisfied and explicitly construct an extreme ray of $\cT(\cM)$ with rank two. The construction crucially uses the geometry of $\R^2$ (and $\S^2$). See \cref{prop:nec_n=2}.
	\item For $n=3$, we will suppose that neither conditions (i) nor (ii) are satisfied and explicitly construct extreme rays of $\cT(\cM)$ with rank two. The construction is based on understanding what the corresponding $\cN(\cM)$ set looks like. This construction crucially use the geometry of $\R^3$. See \cref{prop:nec_n=3}.
	\item Finally, we will show how to reduce the case of $n\geq 4$ to the case of $n = 3$. Specifically, supposing that $\cT(\cM)$ is a ROG cone, with $n\geq 4$, violating (i), we will construct $\overline\cM$ such that $\cT(\overline\cM)$ is a ROG cone, with $n = 3$, violating both (i) and (ii). See \cref{prop:nec_n_geq_4}.\qedhere
\end{itemize}
\end{proof}

\begin{remark}\label{rem:stronger_necessary_condition}
  Suppose \cref{as:M_span_n} holds.
  In this case, condition (ii) necessarily fails if $n\geq 4$. On the other hand if $n\leq 2$ and condition (ii) holds, then in fact condition (i) also holds.  In particular, condition (i) itself completely characterizes the ROG property of a cone defined by two LMIs whenever $n \neq 3$.

  Expanding \cref{as:M_span_n}, we have that condition (i) completely characterizes the ROG property of a cone defined by two LMIs whenever $\dim\left(\spann\left(\range(M_1)\cup\range(M_2)\right)\right) \neq 3$.\qedhere
\end{remark}

\begin{remark}\label{rem:certificates}
Both directions of \cref{thm:two_LMI_NS,thm:nec_conditions_two_LME} admit small certificates. 
\begin{itemize}
\item Suppose $\cS(\cM)$ is ROG. Then \cref{thm:two_LMI_NS} implies that there exists either aggregation weights $(\alpha_1,\alpha_2)\neq(0,0)$ for which $\alpha_1M_1 + \alpha_2M_2 \in\S^n_+$ or vectors $a,b,c\in\R^n$ for which $M_1 = \Sym(ac^\top)$ and $M_2 = \Sym(bc^\top)$.
\item Suppose $\cS(\cM)$ is not ROG. Then by \cref{thm:two_LMI_NS}, it suffices to certify that neither conditions (i) nor (ii) hold.
As $\cS(\cM)$ is not ROG, we may assume that $M_1$ and $M_2$ are linearly
independent. Then, the Gordan--Stiemke Theorem (see \cref{rem:gordan_stiemke}) implies that condition (i) fails if and only if there exists a positive definite matrix $X$ in $\cT(\cM)$. In other words, we can certify that condition (i) fails by presenting a positive definite matrix in $\cT(\cM)$.
If either $\rank(M_1)\geq 3$ or $\rank(M_2)\geq 3$, then the spectral decomposition of the corresponding $M_i$ certifies that condition (ii) does not hold.
Else, $M_1$ and $M_2$ are both indefinite rank-two matrices and we can write $M_1 = \eta_1 \Sym(ab^\top)$ and $M_2=\eta_2 \Sym(cd^\top)$ where $\eta_i \in\R$, $a,b,c,d\in\bS^{n-1}$. This decomposition is unique up to renaming $a$ and $b$ or $c$ and $d$. Then condition (ii) does not hold if and only if $a,b,c,d$ are distinct. In particular, this decomposition certifies that condition (ii) does not hold.\qedhere
\end{itemize}
\end{remark}

In the proof of \cref{thm:nec_conditions_two_LME}, we will make use of the following theorem related to the convexity of the joint image of two quadratic maps.

\begin{restatable}[\citet{dines1941mapping}]{theorem}{dinestheorem}\label{thm:dines}
Let $M_1,M_2\in\S^n$ and suppose that for all $(\alpha_1,\alpha_2)\neq (0,0)$, we have $\alpha_1M_1+\alpha_2M_2 \notin \S^n_+$. Then,
\begin{align*}
\set{\begin{pmatrix}
	x^\top M_1 x \\ x^\top M_2 x
\end{pmatrix}\in\R^2:\, x\in\R^n} = \R^2,
\end{align*}
i.e., for every $y\in\R^2$, there exists an $x\in\R^n$ such that $x^\top M_1x = y_1$ and $x^\top M_2 x = y_2$.
\end{restatable}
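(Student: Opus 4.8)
The plan is to prove the full Dines convexity statement---that the joint range
\[
\cC := \set{\begin{pmatrix} x^\top M_1 x \\ x^\top M_2 x \end{pmatrix} : x\in\R^n} \subseteq \R^2
\]
is a convex cone---and then read off surjectivity from the hypothesis by a short polarity argument. Writing $q$ for the map $x\mapsto (x^\top M_1 x,\, x^\top M_2 x)^\top$, the identity $q(\lambda x) = \lambda^2 q(x)$ shows that $\cC$ is closed under nonnegative scaling and contains $0$, so it is a cone; consequently $\cC$ is convex if and only if it is closed under addition. Thus the entire content reduces to showing: given $x,y\in\R^n$, there exists $z$ with $q(z) = q(x)+q(y)$.

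For this main step I would restrict attention to $V := \spann\set{x,y}$. If $x,y$ are linearly dependent the claim is immediate from the scaling identity, so assume $\dim V = 2$ and let $N_1,N_2\in\S^2$ be the restrictions of $M_1,M_2$ to $V$; the joint range of $(N_1,N_2)$ over $V$ is contained in $\cC$ and contains both $q(x)$ and $q(y)$. Parametrizing $w\in V$ as $w = r(\cos\phi,\sin\phi)$ in an orthonormal basis, each $w^\top N_i w$ becomes $r^2$ times an affine–trigonometric expression $c_i + A_i\cos 2\phi + B_i\sin 2\phi$, so the joint range over the unit circle of $V$ is the image of a circle under an affine map, i.e.\ a (possibly degenerate) ellipse $E$. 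The full joint range over $V$ is then $\cone(E)$, and I would finish by checking $\cone(E) = \cone(D)$, where $D$ is the filled ellipse: every nonzero $d\in D$ is a nonnegative multiple of a point of $E = \bd(D)$, obtained by following the ray through $d$ outward until it meets $\bd(D)$. Since $\cone(D)$ is the conic hull of a convex set it is a convex cone, hence closed under addition; as $q(x),q(y)\in\cone(E)=\cone(D)$, we obtain $q(x)+q(y)\in\cC$. I expect this ellipse/cone analysis---in particular handling the degenerate cases where $E$ collapses to a segment or a point---to be the main obstacle, though each such case yields a sector, line, or ray, all trivially convex.

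With convexity in hand, the hypothesis translates cleanly into a statement about the dual cone. Indeed $(\alpha_1,\alpha_2)\in\cC^*$ if and only if $x^\top(\alpha_1 M_1 + \alpha_2 M_2)x \geq 0$ for all $x$, i.e.\ if and only if $\alpha_1 M_1 + \alpha_2 M_2\in\S^n_+$; the hypothesis says this occurs only for $(\alpha_1,\alpha_2)=(0,0)$, so $\cC^* = \set{0}$. Applying the bipolar theorem for convex cones then gives $\cl(\cC) = (\cC^*)^* = \set{0}^* = \R^2$.

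Finally I would upgrade $\cl(\cC)=\R^2$ to $\cC=\R^2$. Since $\cl(\cC)=\R^2$, the affine hull of $\cC$ is all of $\R^2$, so $\cC$---being a nonempty convex set---has nonempty interior, and the standard convex-analysis identity $\inter(\cl(\cC)) = \inter(\cC)$ yields $\inter(\cC) = \inter(\R^2) = \R^2$. Hence $\cC = \R^2$, which is precisely the claimed surjectivity.
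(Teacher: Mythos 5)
Your proof is correct, but there is an important caveat about the comparison: the paper contains no proof of \cref{thm:dines} at all. The theorem is imported as a classical result of \citet{dines1941mapping} and used as a black box in the proofs of \cref{prop:nec_n=3,prop:nec_n_geq_4}, so there is no internal argument to measure yours against; what you have written is a self-contained proof of a cited result. On its own merits, your argument is sound and follows the standard modern route to Dines' theorem. Homogeneity correctly reduces convexity of the joint range $\cC$ to closure under addition; restricting to $V=\spann\set{x,y}$ and writing $w^\top N_i w = r^2\left(c_i + A_i\cos 2\phi + B_i \sin 2\phi\right)$ exhibits the joint range over $V$ as the union of rays through an affine image $E$ of a circle; and the step you flag as delicate does go through: for nonzero $d\in D=\conv(E)$, the quantity $t_0 = \sup\set{t\geq 0:\, td\in D}$ is finite (since $D$ is bounded) and at least $1$ (since $d\in D$), and $t_0 d\in\bd(D)=E$ in the nondegenerate case, so $d$ is a nonnegative multiple of a point of $E$; in the degenerate cases $E$ is a segment or a point, hence already convex, and its conic hull is a sector, line, ray, or the origin. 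The duality endgame is also right, and it is worth noting that this is the only place the hypothesis enters: the convexity of $\cC$ holds unconditionally, while the hypothesis gives $\cC^*=\set{(\alpha_1,\alpha_2):\,\alpha_1M_1+\alpha_2M_2\in\S^n_+}=\set{0}$, the bipolar theorem gives $\cl(\cC)=\R^2$, and the identity $\rint(\cl(\cC))=\rint(\cC)$ for convex sets upgrades density to equality. That last upgrade is exactly where convexity is indispensable---a dense non-convex cone need not be all of $\R^2$---so your ordering of the argument (convexity first, then duality) is not merely convenient but necessary. In terms of what each choice buys: the paper's citation keeps its exposition focused on the new ROG material, while your proof makes the dependence on Dines' theorem elementary and self-contained, using only two-dimensional geometry and basic convex duality.
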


\subsection{Dimension $n=2$}
\label{subsec:necessary_2}

We now prove \cref{thm:nec_conditions_two_LME} for the case $n = 2$.

\begin{proposition}
	\label{prop:nec_n=2}
	Let $\cM = \set{M_1,M_2}$. Suppose \cref{as:M_span_n} holds and $n=2$. If $\cT(\cM)$ is ROG then there exists $(\alpha_1,\alpha_2)\neq (0,0)$ such that $\alpha_1M_1+\alpha_2M_2 \in\S^n_+$.
\end{proposition}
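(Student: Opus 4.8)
The plan is to establish the contrapositive: assuming condition (i) fails---so that $\alpha_1 M_1 + \alpha_2 M_2 \notin \S^2_+$ for every $(\alpha_1,\alpha_2)\neq(0,0)$---I will show that $\cT(\cM)$ is not ROG. By \cref{cor:TM_rog_iff_nonzero_null} it suffices to produce one nonzero $X\in\cT(\cM)$ with $\range(X)\cap\cN(\cM)=\set{0}$. The decisive simplification in dimension $n=2$ is that a rank-two $X$ has $\range(X)=\R^2$, so for such an $X$ the condition degenerates to $\cN(\cM)=\set{0}$. The proof therefore splits into two claims: (a) $\cT(\cM)$ contains a rank-two matrix, and (b) $\cN(\cM)=\set{0}$.

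For (a) I would invoke \cref{thm:dines}: the failure of condition (i) is exactly its hypothesis, so the joint image $\set{(x^\top M_1 x,\, x^\top M_2 x):\, x\in\R^2}$ equals $\R^2$. Choosing $w$ with image $(1,1)$ and $z$ with image $(-1,-1)$ and setting $X\coloneqq ww^\top+zz^\top$ gives $\ip{M_i,X}=0$ for $i=1,2$, hence $X\in\cT(\cM)$; moreover $w$ and $z$ are linearly independent (a relation $z=\lambda w$ would force $-1=\lambda^2$), so $\rank(X)=2$. One could instead note that the failure of (i) makes $M_1,M_2$ linearly independent---else the zero combination certifies (i)---and then read off a positive definite, hence rank-two, matrix of $\cT(\cM)$ directly from \cref{rem:gordan_stiemke}.

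Claim (b) carries the real content and is the step I expect to be the main obstacle, since it amounts to showing that the two forms share no nonzero common null vector. I would argue by contradiction: suppose $v\neq0$ with $v^\top M_1 v = v^\top M_2 v = 0$. After an orthogonal change of basis $Q$ sending $v/\norm{v}$ to $e_1$---harmless because $Q^\top(\alpha_1 M_1+\alpha_2 M_2)Q\succeq0$ iff $\alpha_1 M_1+\alpha_2 M_2\succeq0$---the transformed matrices satisfy $(M_1)_{11}=(M_2)_{11}=0$. The single homogeneous equation $\alpha_1 (M_1)_{12}+\alpha_2 (M_2)_{12}=0$ in the two unknowns $(\alpha_1,\alpha_2)$ has a nonzero solution; for this choice $\alpha_1 M_1+\alpha_2 M_2$ is diagonal with vanishing first entry, i.e.\ equal to $\Diag(0,d)$, and after possibly negating $(\alpha_1,\alpha_2)$ to arrange $d\geq0$ it is positive semidefinite. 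This contradicts the failure of condition (i), so $\cN(\cM)=\set{0}$. Combining (a) and (b), the rank-two matrix $X$ satisfies $\range(X)\cap\cN(\cM)=\R^2\cap\set{0}=\set{0}$, and \cref{cor:TM_rog_iff_nonzero_null} then shows $\cT(\cM)$ is not ROG, completing the contrapositive. The genuinely two-dimensional ingredient is that a single linear equation in the aggregation weights always admits a nonzero solution, which is precisely what converts a shared null direction into a semidefinite combination.
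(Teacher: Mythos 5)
Your proof is correct, but its decisive step differs from the paper's. Both arguments start the same way: the failure of condition (i) yields a rank-two element of $\cT(\cM)$ (you via \cref{thm:dines}; the paper via the Gordan--Stiemke route that you mention as an alternative, noting that failure of (i) forces $M_1,M_2$ to be linearly independent). They diverge in how non-ROG-ness is concluded. The paper uses a dimension count special to $n=2$: since $\dim\S^2=3$ and $M_1,M_2$ are independent, the subspace $\set{M_1,M_2}^\perp$ is one-dimensional, so $\cT(\cM)$ is exactly the single ray $\R_+X$ spanned by the positive definite matrix produced by Gordan--Stiemke; that ray is then an extreme ray of rank two, and \cref{lem:rog_iff_extreme_rank_one} finishes. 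You instead prove $\cN(\cM)=\set{0}$ by an elementary $2\times 2$ computation---a shared isotropic vector would let you solve one homogeneous linear equation in $(\alpha_1,\alpha_2)$ and produce a PSD combination of the form $\Diag(0,d)$, contradicting the failure of (i)---and then apply \cref{cor:TM_rog_iff_nonzero_null}, using the observation that in $\R^2$ a rank-two matrix has full range. Your route is slightly longer but has two merits: it parallels the paper's own strategy for $n=3$ (\cref{prop:nec_n=3} likewise combines \cref{thm:dines} with an analysis of $\cN(\cM)$), so the two low-dimensional cases get a uniform treatment; and your claim (b) is a stronger structural fact, showing that $\cT(\cM)$ contains no nonzero rank-one matrices at all, not merely that one particular extreme ray has rank two. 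What the paper's dimension count buys in exchange is brevity and an exact description of $\cT(\cM)$ as a single ray through the interior of $\S^2_+$.
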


\begin{proof}
Suppose for all $(\alpha_1,\alpha_2)\neq(0,0)$, the linear combination $\alpha_1M_1  + \alpha_2M_2$ is not positive semidefinite.
In particular, $M_1$ and $M_2$ are linearly independent in $\S^2$.
Then, by Gordan--Stiemke Theorem (see \cref{rem:gordan_stiemke}), we deduce the existence of a positive definite matrix $X\in \cT(\cM)$.

Finally, as $\S^2$ has dimension three, the space orthogonal to both $M_1$ and $M_2$ has dimension one, so that in fact $\cT(\cM) = \R_+(X)$. We conclude that $\R_+(X)$ is an extreme ray with $\rank(X) = 2$.\qedhere

\end{proof}

\begin{figure}
	\begin{center}
		\begin{tikzpicture}
			\node[anchor=south] (image) at (0,0) {
				\includegraphics[width=0.25\textwidth]{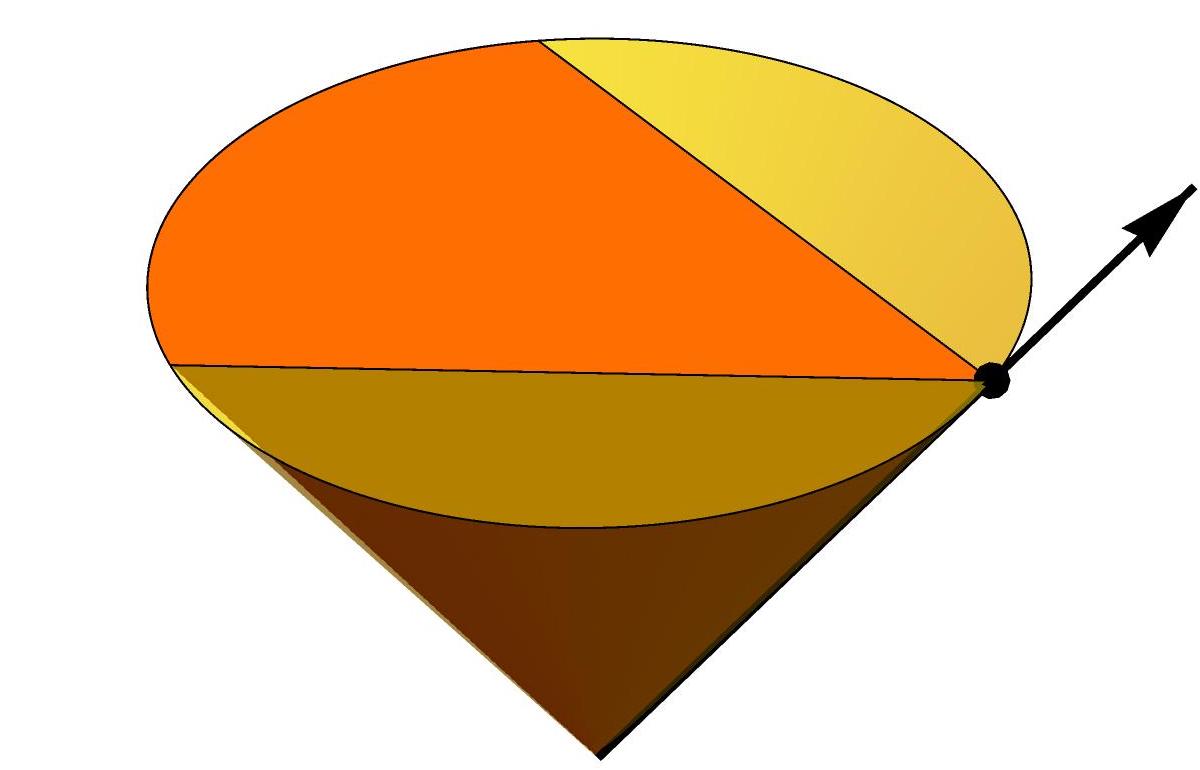}
			};
			\node[anchor=north west] at (1.6,1.8) {$\cT(\cM)$};
			\node[anchor=south] (image) at (5,0) {
				\includegraphics[width=0.2375\textwidth]{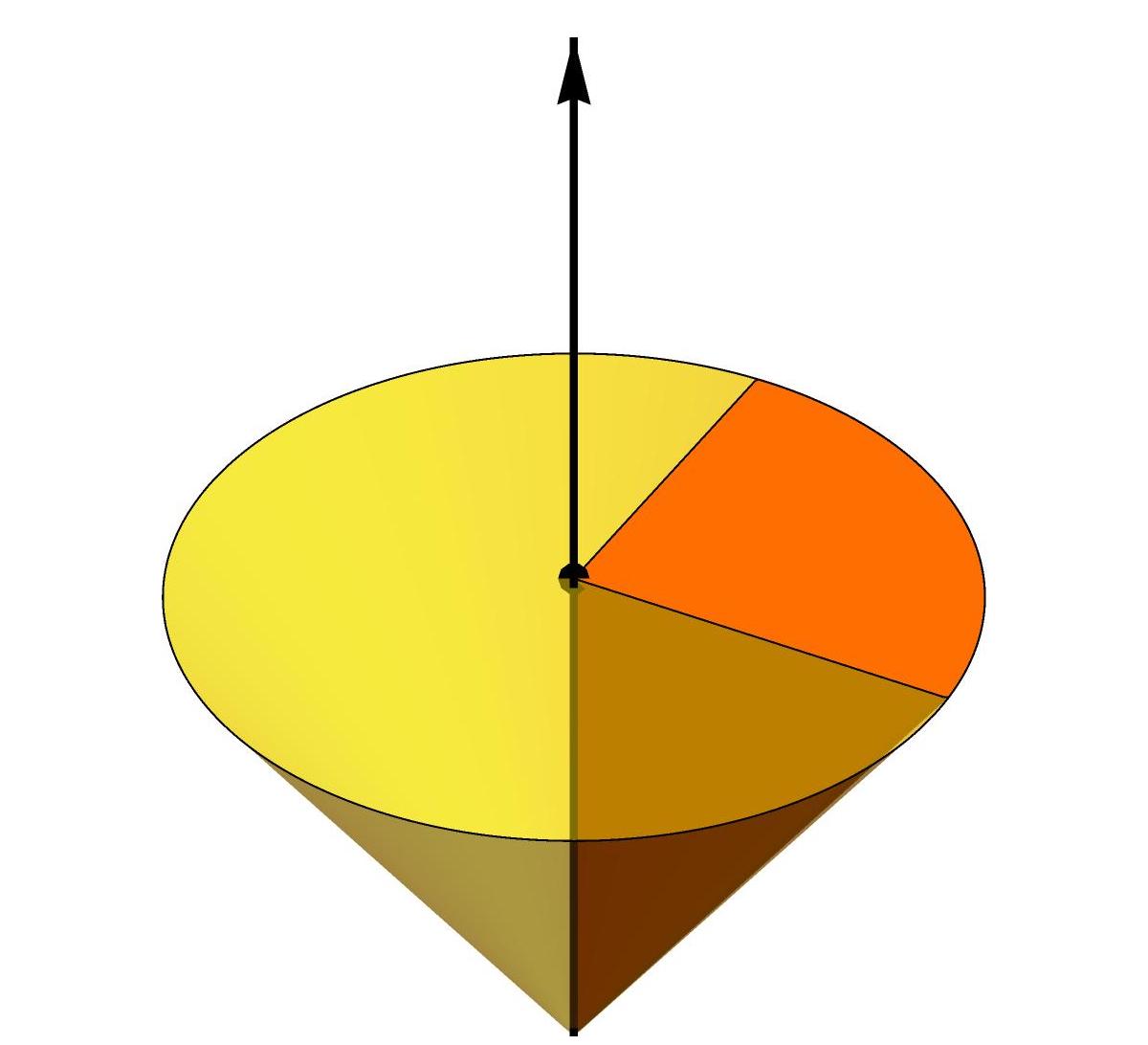}
			};
			\node[anchor=west] at (5,3) {$\cT(\cM)$};
		\end{tikzpicture}
	\end{center}
  \caption{For $n = 2$, every point on the interior of $\S^n_+$ has rank two and every point on the boundary of $\S^n_+$ has rank at most one. Condition (i) implies that $\cT(\cM)$, is either trivial or a ray in the boundary of $\cS^n_+$---this corresponds to the picture on the left. \cref{prop:nec_n=2} shows that when condition (i) is violated, $\cT(\cM)$ is a ray on the interior of $\cS^n_+$---this corresponds to the picture on the right.
  }
\end{figure}

\subsection{Dimension $n=3$}
\label{subsec:necessary_3}

We will make use of the following lemma from \citet[Lemma 3.13]{hildebrand2016spectrahedral}. The lemma states that the Carath\'eodory number of an element $X$ of $\cT(\cM)$ is equal to $\rank(X)$ when $\cT(\cM)$ is ROG.

\begin{lemma}[{\cite[Lemma 3.13]{hildebrand2016spectrahedral}}]
\label{lem:caratheodory}
Suppose $\cT(\cM)$ is ROG. For every $X\in \cT(\cM)$, we can write $X = \sum_{i=1}^{r} x_ix_i^\top$ where $x_i\in\cN(\cM)$ for all $i\in[r]$ and $r = \rank(X)$.
\end{lemma}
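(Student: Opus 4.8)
The plan is to induct on the rank $r=\rank(X)$, peeling off one rank-one term at a time while exactly decreasing the rank. The base case $r=0$ is the empty sum, and $r=1$ is immediate: if $X=xx^\top\in\cT(\cM)$, then $x^\top M x=\ip{M,X}=0$ for all $M\in\cM$, so $x\in\cN(\cM)$ and $X=xx^\top$ is already the desired single-term decomposition.

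For the inductive step, suppose the claim holds for all matrices in $\cT(\cM)$ of rank strictly less than $r$, and let $X\in\cT(\cM)$ have $\rank(X)=r\geq 2$. Since $\cT(\cM)$ is ROG, \cref{cor:TM_rog_iff_nonzero_null} supplies a nonzero vector $x_1\in\range(X)\cap\cN(\cM)$. Because $x_1\in\range(X)$, \cref{fact:psd_modification_directions} guarantees that $X-\epsilon x_1x_1^\top\in\S^n_+$ for small $\epsilon>0$; I would then take $\epsilon^\ast$ to be the largest such value and set
\begin{align*}
X' \coloneqq X - \epsilon^\ast x_1 x_1^\top.
\end{align*}
The membership $X'\in\cT(\cM)$ is easy: for each $M\in\cM$ we have $\ip{M,X'}=\ip{M,X}-\epsilon^\ast x_1^\top M x_1 = 0$, since $X\in\cT(\cM)$ and $x_1\in\cN(\cM)$. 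The key computational point is that $\rank(X')=r-1$ exactly. Since $x_1\in\range(X)=:W$, the range of $X'$ stays inside $W$, and on $W$ the restriction $X_W$ is positive definite; by the matrix determinant lemma the determinant of $X_W-\epsilon (x_1x_1^\top)_W$ is a degree-one polynomial in $\epsilon$ with a single root at $\epsilon^\ast=1/(x_1^\top X_W^{-1}x_1)$, and Cauchy interlacing for a rank-one perturbation forces at most one eigenvalue to vanish there. Hence $X'$ has rank exactly $r-1$.

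Applying the inductive hypothesis to $X'$ yields $X'=\sum_{i=1}^{r-1}y_iy_i^\top$ with each $y_i\in\cN(\cM)$ and exactly $r-1$ terms. Recombining, and using that $\cN(\cM)$ is a cone so that $\sqrt{\epsilon^\ast}\,x_1\in\cN(\cM)$, gives
\begin{align*}
X = \left(\sqrt{\epsilon^\ast}\,x_1\right)\left(\sqrt{\epsilon^\ast}\,x_1\right)^\top + \sum_{i=1}^{r-1} y_i y_i^\top,
\end{align*}
an $r$-term decomposition with all vectors in $\cN(\cM)$, completing the induction. I expect the main obstacle to be the rank-drop claim: one must verify carefully that choosing the \emph{maximal} coefficient $\epsilon^\ast$ reduces the rank by \emph{exactly} one (not more), which is where the interlacing/rank-one-perturbation structure is essential; the remaining steps (membership in $\cT(\cM)$ and closure of $\cN(\cM)$ under scaling) are routine.
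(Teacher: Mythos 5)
Your proof is correct, but note that the paper does not actually prove \cref{lem:caratheodory}: it imports the statement wholesale from \cite[Lemma 3.13]{hildebrand2016spectrahedral}, so there is no in-paper argument to compare against. Your induction on $\rank(X)$ is a legitimate, self-contained alternative that uses only tools already available in the paper: \cref{cor:TM_rog_iff_nonzero_null} supplies the nonzero $x_1\in\range(X)\cap\cN(\cM)$, membership $X'\in\cT(\cM)$ follows since $x_1\in\cN(\cM)$, and the rest is linear algebra. The one place a referee would press you is the identification of the maximal step $\epsilon^\ast$ and the exact rank drop; your treatment is right, but to make it airtight you should say explicitly that (a) the set $\set{\epsilon\geq 0:\, X-\epsilon x_1x_1^\top\succeq 0}$ is closed and bounded above (since $x_1\neq 0$), so the maximum exists; (b) at $\epsilon^\ast$ the restriction $X'_W$, where $W=\range(X)$, must be singular, since otherwise $X'_W\succ 0$ and one could increase $\epsilon$ further; and (c) the rank cannot drop by more than one because $X'$ agrees with $X$, hence is positive definite, on the $(r-1)$-dimensional subspace $W\cap x_1^\perp$---this is the substance of your interlacing appeal, stated without needing eigenvalue continuity or the determinant formula at all. (Positivity of the determinant for $\epsilon<1/\bigl(x_1^\top X_W^{-1}x_1\bigr)$ by itself does not imply positive semidefiniteness, so the matrix determinant lemma is best kept as a side remark rather than the load-bearing step.) What your route buys is that the paper would become self-contained on this point; what the citation buys is brevity, and since Hildebrand's own proof is in a similar spirit (inductively peeling off rank-one elements lying in the cone), your argument is a faithful elementary reconstruction rather than a genuinely divergent method.
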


The next lemma states that when neither conditions (i) nor (ii) hold, the set $\cN(\cM)$ is extremely sparse in $\R^3$.

\begin{restatable}{lemma}{bezouttheorem}\label{lem:N_at_most_four_lines}
Let $\cM= \set{M_1,M_2}$. Suppose \cref{as:M_span_n} holds and $n=3$. If neither conditions (i) nor (ii) of \cref{thm:nec_conditions_two_LME} hold, then $\cN(\cM)$ is the union of at most four one-dimensional subspaces of $\R^3$.
\end{restatable}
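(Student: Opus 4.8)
The plan is to recognize $\cN(\cM)$ as the affine cone over the real intersection points of two projective conics and then bound that intersection via B\'ezout's theorem. Write $q_i(x) \coloneqq x^\top M_i x$ for $i \in \set{1,2}$, so that $\cN(\cM) = \set{x \in \R^3 :\, q_1(x) = q_2(x) = 0}$ is a homogeneous variety; every nonzero point of it spans a one-dimensional subspace, and two points span the same subspace exactly when they agree as points of $\P^2(\R)$. Thus it suffices to show that the two conics $C_i \coloneqq \set{[x] \in \P^2(\C) :\, q_i(x) = 0}$ meet in at most four points, since the real points of $C_1 \cap C_2$ are then at most four and each corresponds to exactly one line of $\cN(\cM)$.

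First I would record that the failure of condition (i) forces $M_1$ and $M_2$ to be linearly independent: the zero matrix is positive semidefinite, so if some $(\alpha_1,\alpha_2) \neq (0,0)$ had $\alpha_1 M_1 + \alpha_2 M_2 = 0$, then (i) would hold. Because $M \mapsto q_M$ is a linear isomorphism onto the quadratic forms, $q_1$ and $q_2$ are then non-proportional, and in particular both nonzero. By B\'ezout's theorem, two plane conics with no common component meet in at most $2 \cdot 2 = 4$ points, so the entire argument reduces to ruling out a common component of $C_1$ and $C_2$.

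The key step is therefore to show that $q_1$ and $q_2$ share no common factor over $\C$; this is where conditions (i) and (ii) are used. Since $q_1, q_2$ are non-proportional forms of degree two, a common component would have to be a common linear factor $\ell$. If the line $\set{\ell = 0}$ is defined over $\R$, I can scale so that $\ell = c^\top x$ with $c \in \R^3$; dividing the real forms $q_i$ by $\ell$ yields real linear forms $a^\top x = q_1/\ell$ and $b^\top x = q_2/\ell$, whence $q_1 = (a^\top x)(c^\top x)$ and $q_2 = (b^\top x)(c^\top x)$, i.e.\ $M_1 = \Sym(ac^\top)$ and $M_2 = \Sym(bc^\top)$ --- exactly condition (ii), a contradiction. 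If instead $\ell$ is not defined over $\R$, then its conjugate $\bar\ell$ is a distinct linear form that also divides both real forms $q_1, q_2$; as each $q_i$ has degree two, this forces $q_i \in \C \cdot \ell\bar\ell$, so $q_1$ and $q_2$ are proportional, contradicting their linear independence. Either way no common component exists, B\'ezout applies, and $\cN(\cM)$ is a union of at most four one-dimensional subspaces.

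I expect the main obstacle to be exactly this common-factor analysis: one must handle the reducibility of the conics and, crucially, the reality of the shared factor, and verify that the only surviving possibility reproduces condition (ii) with genuinely real vectors $a,b,c$. The B\'ezout bound itself is standard once a common component is excluded, and the passage from complex intersection points to real lines is routine; \cref{as:M_span_n} and the restriction $n = 3$ serve only to place us in $\R^3$ and make ``two conics in $\P^2$'' the correct picture.
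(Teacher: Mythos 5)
Your proof is correct, but it follows a genuinely different route from the paper's own proof: you invoke B\'ezout's theorem over $\P^2(\C)$, which is precisely the argument the paper sketches in a footnote and then deliberately sets aside in favor of an elementary linear-algebraic proof in \cref{ap:bezout_proof}. The substance you add beyond that footnote is the common-component analysis, and you carry it out correctly: failure of condition (i) forces $M_1,M_2$ to be linearly independent (since $0\in\S^3_+$), so a common component of the two conics must be a common linear factor $\ell$; if $\ell$ is proportional to a real form $c^\top x$, the cofactors $q_1/\ell$ and $q_2/\ell$ are automatically real linear forms, which reproduces exactly condition (ii) of \cref{thm:nec_conditions_two_LME}; and if $\ell$ is not proportional to a real form, then $\bar\ell$ is a non-proportional linear form also dividing each $q_i$, forcing $q_i\in\C\cdot\ell\bar\ell$ and contradicting independence. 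The paper's proof instead splits on ranks: when $\rank(M_1)=\rank(M_2)=2$, each $M_i$ is $\Sym$ of an outer product of real vectors and $\cN(\cM)$ is written explicitly as the union of four pairwise intersections of hyperplanes, each one-dimensional because (ii) fails; when some $M_i$ has rank three, it normalizes $M_1=\Diag(1,1,-1)$ and shows that five distinct lines in $\cN(\cM)$ would force an explicit degree-4 univariate polynomial to have five roots, hence vanish identically, which contradicts the failure of (i). Your approach buys brevity and conceptual clarity---the paper's degree-4 polynomial is essentially a resultant, i.e., B\'ezout computed by hand---while the paper's approach buys self-containedness, consistent with its stated aim that all proofs rely only on elementary linear algebra and convex analysis.
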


Readers familiar with algebraic geometry will recognize this as a consequence of B\'ezout's theorem.\footnote{
Assuming that neither conditions (i) nor (ii) hold, the plane curves defined by $M_1$ and $M_2$ cannot share a common component. Then B\'ezout's theorem implies that $\cN(\cM)$ consists of at most four lines (or equivalently, four points in projective space).} For completeness, we provide an elementary proof of this lemma using only linear algebraic tools in \cref{ap:bezout_proof}.

We are now ready to prove \cref{thm:nec_conditions_two_LME} for the case of $n = 3$. We will assume that neither conditions (i) nor (ii) hold and use \cref{lem:N_at_most_four_lines} and \cref{thm:dines} to construct a rank-two matrix contained in $\cT(\cM)$. We will then apply \cref{lem:caratheodory} to derive a contradiction.

\begin{proposition}
	\label{prop:nec_n=3}
	Let $\cM = \set{M_1,M_2}$. Suppose \cref{as:M_span_n} holds and $n=3$. If $\cT(\cM)$ is ROG, then one of conditions (i) or (ii) of \cref{thm:nec_conditions_two_LME} must hold.
\end{proposition}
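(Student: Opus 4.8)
The plan is to prove the contrapositive: assuming that neither (i) nor (ii) holds (together with \cref{as:M_span_n} and $n=3$), I will show that $\cT(\cM)$ is not ROG by producing a rank-two matrix $X\in\cT(\cM)$ whose existence is incompatible with the ROG property. The three ingredients are exactly the ones flagged before the statement: \cref{thm:dines} will produce the rank-two matrix, \cref{lem:N_at_most_four_lines} will pin down the geometry of $\cN(\cM)$, and \cref{lem:caratheodory} will convert the ROG hypothesis into a rigid structural constraint that the matrix $X$ will be engineered to violate.

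First I would set up the construction. Since condition (i) fails, no nonzero combination $\alpha_1 M_1+\alpha_2 M_2$ is positive semidefinite, so \cref{thm:dines} applies and the joint image $\set{(x^\top M_1 x,\,x^\top M_2 x):\, x\in\R^3}$ equals all of $\R^2$. By \cref{lem:N_at_most_four_lines}, $\cN(\cM)=\bigcup_i \ell_i$ is a union of at most four lines through the origin, so there are at most $\binom{4}{2}=6$ two-dimensional subspaces $P_{ij}:=\spann\set{\ell_i,\ell_j}$ spanned by pairs of distinct lines. As $\R^3$ is not a finite union of proper subspaces, I can choose $w\in\R^3$ lying in none of the $P_{ij}$; when at least two lines are present this automatically gives $w\notin\cN(\cM)$ (each $\ell_i$ lies in some $P_{ij}$), and when fewer than two lines are present I simply take $w$ off the at-most-one line of $\cN(\cM)$. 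In every case $q(w):=(w^\top M_1 w,\,w^\top M_2 w)\neq 0$. Applying \cref{thm:dines} to the target $-q(w)$ yields a vector $z$ with $q(z)=-q(w)$. A one-line check shows $w,z$ are linearly independent, since a relation $z=\lambda w$ would force $(\lambda^2+1)\,q(w)=0$. Hence $X:=ww^\top+zz^\top$ is positive semidefinite of rank two, and $\ip{M_i,X}=w^\top M_i w+z^\top M_i z=0$ for $i=1,2$, so $X\in\cT(\cM)$ with $\range(X)=\spann\set{w,z}$.

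Finally I would derive the contradiction. Suppose $\cT(\cM)$ were ROG. By \cref{lem:caratheodory} the Carath\'eodory number of $X$ equals its rank, so $X=x_1 x_1^\top+x_2 x_2^\top$ with $x_1,x_2\in\cN(\cM)$; since $\rank(X)=2$, the vectors $x_1,x_2$ are linearly independent and $\range(X)=\spann\set{x_1,x_2}$. Thus $\range(X)$ is spanned by two distinct lines of $\cN(\cM)$, i.e.\ $\range(X)=P_{ij}$ for some $i\neq j$. But then $w\in\range(X)=P_{ij}$, contradicting the choice of $w$. (When $\cN(\cM)$ has at most one line this step fails even more cheaply, as $\cN(\cM)$ cannot then contain two independent vectors.) This contradiction shows $\cT(\cM)$ is not ROG, which is the contrapositive of the proposition.

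The main obstacle---and the only genuinely new point beyond assembling the cited lemmas---is guaranteeing that $\range(X)$ is \emph{not} one of the finitely many two-line planes $P_{ij}$. The role of \cref{thm:dines} is merely to guarantee the existence of \emph{some} $z$ with the opposite image; I have essentially no control over which $z$ is produced, hence no direct control over $\spann\set{w,z}$. The resolution is to shift all the control onto $w$: by forcing $w$ to avoid every $P_{ij}$ in advance, any plane through $w$---in particular $\range(X)$---is automatically excluded from being a two-line plane, which is precisely the configuration \cref{lem:caratheodory} would demand. I would also want to verify the routine linear-algebra facts used implicitly, namely that a sum of two rank-one matrices with linearly independent vectors has rank two and range equal to their span, and that $q(w)\neq 0$ is equivalent to $w\notin\cN(\cM)$.
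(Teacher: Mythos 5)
Your proof is correct and takes essentially the same approach as the paper: the paper likewise chooses $w$ outside the finite union $\cR=\bigcup_{x,y\in\cN(\cM)}\spann\set{x,y}$ (which is precisely your planes $P_{ij}$ together with the lines of $\cN(\cM)$), invokes \cref{thm:dines} to obtain $z$ with $q(z)=-q(w)$, forms the rank-two matrix $X=ww^\top+zz^\top\in\cT(\cM)$, and derives the same contradiction with \cref{lem:caratheodory}. Your write-up merely makes a few routine checks (linear independence of $w,z$, the degenerate case of at most one line) more explicit than the paper does.
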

\begin{proof}
Suppose $\cT(\cM)$ is ROG but neither conditions (i) nor (ii) hold. Consider the subset of $\R^3$ given by
\begin{align*}
\cR \coloneqq \bigcup_{x,y\in\cN(\cM)} \spann(\set{x,y}).
\end{align*}
By \cref{lem:N_at_most_four_lines}, we have that $\cR$ is the union of a finite number of planes and lines in $\R^3$, and thus there exists $w\notin \cR$. By \cref{thm:dines}, we can pick $z$ such that
\begin{align*}
\begin{pmatrix}
	z^\top M_1 z\\
	z^\top M_2 z
\end{pmatrix} = 
- \begin{pmatrix}
	w^\top M_1 w\\
	w^\top M_2 w
\end{pmatrix}.
\end{align*}
As $w\notin\cR$, we deduce at least one of $w^\top M_1 w$ and $w^\top M_2 w$ is nonzero. Then, it is clear that $w$ and $z$ are linearly independent, and thus $X\coloneqq ww^\top + zz^\top$ is a rank-two matrix contained in $\cT(\cM)$.

As $\cT(\cM)$ is ROG, we can apply \cref{lem:caratheodory}. In particular, we can write $X = xx^\top + yy^\top$ for some $x,y\in\cN(\cM)$. Then, $w\in \range(X) = \spann(x,y)\subseteq \cR$. This contradicts our choice of $w\notin \cR$.\qedhere
\end{proof}

\subsection{Dimensions $n\geq 4$}
\label{subsec:necessary_geq_4}

We will now reduce the case of $n\geq 4$ to $n = 3$. The proof will show that if $\cM$ violates condition (i) then there exists a three-dimensional subspace $W$ for which the restriction of $\cM$ to $W$ fails both conditions (i) and (ii).

We begin by showing that there exists a linear combination of $M_1$ and $M_2$ with rank at least three.
\begin{lemma}
\label{lem:rank_three_lin_comb}
Let $\cM = \set{M_1,M_2}$. Suppose \cref{as:M_span_n} holds and $n\geq 4$. If condition (i) in \cref{thm:nec_conditions_two_LME} does not hold, then there exists $(\alpha_1,\alpha_2)$ such that $\rank(\alpha_1M_1+\alpha_2M_2)\geq 3$.
\end{lemma}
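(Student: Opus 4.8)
The plan is to argue by contradiction: I would suppose that condition (i) of \cref{thm:nec_conditions_two_LME} fails and yet \emph{every} linear combination $\alpha_1 M_1 + \alpha_2 M_2$ has rank at most two, and then contradict \cref{as:M_span_n} together with $n\geq 4$. So the working hypothesis throughout is that $\rank(\alpha_1 M_1 + \alpha_2 M_2)\leq 2$ for all $(\alpha_1,\alpha_2)$.

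First I would record what failure of condition (i) buys at the level of the individual matrices. Taking $(\alpha_1,\alpha_2)=(\pm 1,0)$ shows that neither $M_1$ nor $-M_1$ is positive semidefinite, so $M_1$ is indefinite; likewise $M_2$. Combined with the contradiction hypothesis $\rank(M_1),\rank(M_2)\leq 2$, each $M_i$ must then have exactly one positive and one negative eigenvalue, i.e.\ be an indefinite rank-two matrix. Every such matrix can be written as $uu^\top - vv^\top = \Sym\!\big((u+v)(u-v)^\top\big)$ with $u,v$ orthogonal, so $M_i = \Sym(a_i b_i^\top)$ for linearly independent $a_i,b_i$ (the decomposition already exploited in \cref{rem:certificates}), and $\range(M_i)=\spann\set{a_i,b_i}$.

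Next I would exploit the span assumption to pin down the dimension. Since $\range(M_1)+\range(M_2)=\spann\set{a_1,b_1,a_2,b_2}$ has dimension at most four, but must equal $\R^n$ by \cref{as:M_span_n}, and $n\geq 4$, we are forced into $n=4$ with $\set{a_1,b_1,a_2,b_2}$ a basis of $\R^4$; in particular this automatically rules out $M_1,M_2$ being linearly dependent, so no separate case is needed. The crux is then a one-line rank computation contradicting the hypothesis already at the single combination $(\alpha_1,\alpha_2)=(1,1)$. I would pass to the dual coordinates $\xi_1=a_1^\top x$, $\eta_1=b_1^\top x$, $\xi_2=a_2^\top x$, $\eta_2=b_2^\top x$; because $\set{a_1,b_1,a_2,b_2}$ is a basis this substitution is a linear isomorphism, hence a congruence, and therefore preserves rank. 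In these coordinates the quadratic form of $M_1+M_2$ becomes $(a_1^\top x)(b_1^\top x)+(a_2^\top x)(b_2^\top x)=\xi_1\eta_1+\xi_2\eta_2$, which is nondegenerate in all four variables (each summand is a signature-$(1,1)$ form on a disjoint coordinate pair), so $\rank(M_1+M_2)=4\geq 3$. This contradicts the assumption that the combination $(1,1)$ has rank at most two, completing the proof.

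I expect the only delicate part to be the structural first step: arguing that indefiniteness plus the rank bound forces the precise $\Sym(a_i b_i^\top)$ form and the stated ranges, and then leveraging \cref{as:M_span_n} to collapse to $n=4$. Once that structure is in hand, the congruence computation is the clean payoff and the contradiction is immediate.
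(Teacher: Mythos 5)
Your proof is correct and follows essentially the same route as the paper's: assume every linear combination has rank at most two, deduce from the failure of condition (i) that $M_1$ and $M_2$ are indefinite rank-two matrices of the form $\Sym(a_ib_i^\top)$, use \cref{as:M_span_n} to conclude that $a_1,b_1,a_2,b_2$ are linearly independent, and then show $\rank(M_1+M_2)=4$, a contradiction. The only (cosmetic) difference is the final step: you compute the rank via congruence of $M_1+M_2$ to the nondegenerate form $\xi_1\eta_1+\xi_2\eta_2$, whereas the paper picks dual vectors to show directly that $a_1,b_1,a_2,b_2\in\range(M_1+M_2)$; both are one-line verifications resting on the same dual basis.
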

\begin{proof}
Suppose $\rank(\alpha_1M_1+\alpha_2M_2)\leq 2$ for all $(\alpha_1,\alpha_2)$. Because condition (i) does not hold, we conclude that for all $(\alpha_1,\alpha_2)\neq (0,0)$, the linear combination $\alpha_1M_1+\alpha_2M_2$ has exactly one positive and one negative eigenvalue.
Then, we can write $M_1 =\Sym(ab^\top)$ and $M_2 = \Sym(cd^\top)$. By \cref{as:M_span_n}, we have that $a,b,c,d$ are linearly independent. By independence, there exists an $x$ such that $x^\top b = 1$ and $x^\top a= x^\top c=x^\top d = 0$; we deduce that $(M_1+M_2)x = a\in\range(M_1+M_2)$. Similarly, $b,c,d\in\range(M_1+M_2)$. Then $\rank(M_1+M_2)=4$, a contradiction.\qedhere
\end{proof} 

We are now ready to prove \cref{thm:nec_conditions_two_LME} for the case of $n\geq 4$.
\begin{proposition}
\label{prop:nec_n_geq_4}
Let $\cM = \set{M_1,M_2}$. Suppose \cref{as:M_span_n} holds and $n\geq 4$. If $\cT(\cM)$ is ROG, then there exists $(\alpha_1,\alpha_2)\neq(0,0)$ such that $\alpha_1M_1+\alpha_2M_2\in\S^n_+$.
\end{proposition}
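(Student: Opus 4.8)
The plan is to reduce the case $n \geq 4$ to the already-resolved case $n = 3$ of \cref{prop:nec_n=3}. Assume for contradiction that $\cT(\cM)$ is ROG but condition (i) fails. I would first select a three-dimensional subspace $W \subseteq \R^n$ and pass to the restricted pair $\overline\cM = \set{(M_1)_W, (M_2)_W}$. The key structural observation is that the restriction stays ROG for \emph{free}: writing $P_{W^\perp}$ for the orthogonal projector onto $W^\perp$, one has $P_{W^\perp} \in \S^n_+ \subseteq \cT(\cM)^*$, and for $X \in \cT(\cM)$ the condition $\ip{P_{W^\perp}, X} = 0$ is equivalent (by \cref{lem:block_decomp}) to $\range(X) \subseteq W$; hence the embedded copy of $\cT(\overline\cM)$ equals the exposed face $\cT(\cM) \cap P_{W^\perp}^\perp$. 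Since $\cT(\cM) = \cS(\set{M_1, -M_1, M_2, -M_2})$ is ROG, every face is ROG by \cref{lem:rog_iff_faces_rog}, and the embedding $X_W \mapsto X_W \oplus 0_{W^\perp}$ is rank-preserving, so $\cT(\overline\cM)$ is ROG for \emph{any} choice of $W$. It therefore remains to engineer $W$ so that $\overline\cM$ satisfies \cref{as:M_span_n} yet violates both (i) and (ii); \cref{prop:nec_n=3} then yields the desired contradiction.

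To build $W$ I would control all three properties through a single rank-$3$ combination. Because (i) fails, \cref{lem:rank_three_lin_comb} produces $N := \alpha_1 M_1 + \alpha_2 M_2$ with $\rank(N) \geq 3$. The plan is to choose $W = \spann\set{x_1, x_2, x_3}$ so that (a) $N|_W$ is nonsingular and (b) the joint images $q_i := (x_i^\top M_1 x_i, x_i^\top M_2 x_i)$ surround the origin, i.e.\ $0 \in \inter\conv\set{q_1, q_2, q_3}$. Property (a) is encoded by the single polynomial inequality $P(x_1, x_2, x_3) := \det\big((x_i^\top N x_j)_{i,j\in[3]}\big) \neq 0$; taking $x_1, x_2, x_3$ to be eigenvectors of $N$ with nonzero eigenvalues shows $P \not\equiv 0$, so $\set{P \neq 0}$ is dense and open. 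For property (b), since (i) fails, \cref{thm:dines} gives that the joint range $\set{(x^\top M_1 x, x^\top M_2 x) : x \in \R^n}$ is all of $\R^2$; picking preimages of three points surrounding the origin and using continuity shows the set of triples satisfying (b) is nonempty and open. Intersecting a nonempty open set with a dense open set, I obtain a triple $(x_1, x_2, x_3)$ satisfying both (a) and (b).

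With this $W$ in hand the three requirements on $\overline\cM$ follow quickly. Since $(N)_W = \alpha_1 (M_1)_W + \alpha_2 (M_2)_W$ is nonsingular on the $3$-dimensional $W$, it has rank $3$, so $\range\big((N)_W\big) = W$ and \cref{as:M_span_n} holds for $\overline\cM$. This same rank-$3$ element of $\spann(\overline\cM)$ rules out (ii): if $(M_1)_W = \Sym(\bar a \bar c^\top)$ and $(M_2)_W = \Sym(\bar b \bar c^\top)$, then every element of $\spann(\overline\cM)$ would be of the form $\Sym(v\bar c^\top)$ and hence of rank at most two, contradicting $\rank((N)_W) = 3$. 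Finally, (i) fails for $\overline\cM$ because any PSD combination $\alpha_1 (M_1)_W + \alpha_2 (M_2)_W \succeq 0$ would make the linear functional $(s,t) \mapsto \alpha_1 s + \alpha_2 t$ nonnegative on $\set{q_1, q_2, q_3}$, hence on a neighborhood of the origin by (b), forcing $(\alpha_1, \alpha_2) = (0,0)$. Thus $\overline\cM$ satisfies \cref{as:M_span_n} with $\dim W = 3$, $\cT(\overline\cM)$ is ROG, and yet neither (i) nor (ii) holds, contradicting \cref{prop:nec_n=3}.

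The main obstacle is precisely the simultaneous control of conditions (i) and (ii) on the small space: a naive $W$ that kills (i) might accidentally make both restricted matrices rank at most two with a common factor, thereby satisfying (ii), or might fail \cref{as:M_span_n}. Routing both of these through the rank-$3$ combination $N$—so that nonsingularity of $N|_W$ handles (ii) and \cref{as:M_span_n} at once while the surrounding-images condition handles (i)—is what makes the argument go through, and the genericity step (a nonzero polynomial cannot vanish on a nonempty open set) is what guarantees these can be achieved together. I expect the only delicate points to be the verification that the ``surrounding'' set of triples is genuinely nonempty and open (which rests on \cref{thm:dines}) and the bookkeeping that a rank-$\geq 3$ combination retains rank $3$ when restricted to a generic $W$.
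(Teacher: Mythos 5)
Your proof is correct and follows essentially the same route as the paper: reduce to $n=3$ by restricting to a three-dimensional subspace $W$, note that the embedded copy of $\cT(\overline\cM)$ is a face of $\cT(\cM)$ (hence ROG by \cref{lem:rog_iff_faces_rog}), and use \cref{thm:dines} together with \cref{lem:rank_three_lin_comb} to make the restricted pair violate both (i) and (ii), contradicting \cref{prop:nec_n=3}. The only difference is in packaging: the paper picks $W$ via the explicit one-parameter interpolation $x_i = (1-\mu)u_i + \mu v_i$ with polynomial non-vanishing in $\mu$, whereas you intersect a dense open set (nonsingularity of $N|_W$) with a nonempty open set (the $q_i$ surrounding the origin)---the same perturbation idea phrased as a genericity argument, with the added (and welcome) care of explicitly verifying \cref{as:M_span_n} for $\overline\cM$.
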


\begin{figure}
	\begin{center}
		\begin{tikzpicture}

			\begin{scope}
				\clip (-2.75,-2.75) rectangle (2.75,2.75);
				\fill[fill=gray,opacity=0.2]
					({10 * cos(30)}, {10 * sin(30)}) --
					({10 * cos(30) + 10 * cos(120)}, {10 * sin(30) + 10 * sin(120)}) --
					({-10 * cos(30) + 10 * cos(120)}, {-10 * sin(30)  + 10 * sin(120)}) --
					({-10 * cos(30)}, {-10 * sin(30)}) -- cycle;

				\fill[fill=gray,opacity=0.2]
					({10 * cos(-30)}, {10 * sin(-30)}) --
					({10 * cos(-30) + 10 * cos(240)}, {10 * sin(-30) + 10 * sin(240)}) --
					({-10 * cos(-30) + 10 * cos(240)}, {-10 * sin(-30)  + 10 * sin(240)}) --
					({-10 * cos(-30)}, {-10 * sin(-30)}) -- cycle;

				\fill[fill=blue,opacity=0.3]
					(0,-10) rectangle (10,10);
			\end{scope}

			\draw[blue, ->, thick] (0,0)--(1,0);
			\node at (1.85, 0) {$\left(\begin{smallmatrix}
				u_1^\top M_1 u_1\\
				u_1^\top M_2 u_1
			\end{smallmatrix}\right)$};
			\draw[blue, thick] (0,0.15) -- (0.15, 0.15) -- (0.15, 0);

			\draw[->] (0,0)--({cos(120)},{sin(120)});
			\node at ({1.75 * cos(120)},{1.75 * sin(120)}) {$\left(\begin{smallmatrix}
				u_2^\top M_1 u_2\\
				u_2^\top M_2 u_2
			\end{smallmatrix}\right)$};

			\draw[->] (0,0)--({cos(240)},{sin(240)});
			\node at ({1.75 * cos(240)},{1.75 * sin(240)}) {$\left(\begin{smallmatrix}
				u_3^\top M_1 u_3\\
				u_3^\top M_2 u_3
			\end{smallmatrix}\right)$};

			\begin{scope}
				\clip (-2.75 + 6.5,-2.75) rectangle (2.75 + 6.5,2.75);

				\fill[fill=gray,opacity=0.2]
					({10 * cos(120 - 2.5 - 90) + 6.5}, {10 * sin(120 - 2.5 - 90)}) --
					({10 * cos(120 - 2.5 - 90) + 10 * cos(120 - 2.5) + 6.5}, {10 * sin(120 - 2.5 - 90) + 10 * sin(120 - 2.5)}) --
					({-10 * cos(120 - 2.5 - 90) + 10 * cos(120 - 2.5) + 6.5}, {-10 * sin(120 - 2.5 - 90)  + 10 * sin(120 - 2.5)}) --
					({-10 * cos(120 - 2.5 - 90) + 6.5}, {-10 * sin(120 - 2.5 - 90)}) -- cycle;

				\fill[fill=gray,opacity=0.2]
					({10 * cos(240 - 10 - 90) + 6.5}, {10 * sin(240 - 10 - 90)}) --
					({10 * cos(240 - 10 - 90) + 10 * cos(240 - 10) + 6.5}, {10 * sin(240 - 10 - 90) + 10 * sin(240 - 10)}) --
					({-10 * cos(240 - 10 - 90) + 10 * cos(240 - 10) + 6.5}, {-10 * sin(240 - 10 - 90)  + 10 * sin(240 - 10)}) --
					({-10 * cos(240 - 10 - 90) + 6.5}, {-10 * sin(240 - 10 - 90)}) -- cycle;

				\fill[fill=blue,opacity=0.3]
					({10 * cos(5 - 90) + 6.5}, {10 * sin(5 - 90)}) --
					({10 * cos(5 - 90) + 10 * cos(5) + 6.5}, {10 * sin(5 - 90) + 10 * sin(5)}) --
					({-10 * cos(5 - 90) + 10 * cos(5) + 6.5}, {-10 * sin(5 - 90)  + 10 * sin(5)}) --
					({-10 * cos(5 - 90) + 6.5}, {-10 * sin(5 - 90)}) -- cycle;

			\end{scope}

			\draw[blue, ->, thick] (0 + 6.5,0)--({0.8 * cos(5) + 6.5},{0.8 * sin(5)});
			\node at ({1.85 * cos(5) + 6.5}, {1.85 * sin(5)}) {$\left(\begin{smallmatrix}
				x_1^\top M_1 x_1\\
				x_1^\top M_2 x_1
			\end{smallmatrix}\right)$};
			\draw[blue, thick] ({0.15 * cos(5) + 6.5}, {0.15 * sin(5)}) -- ({0.15 * cos(5) + 0.15 * cos(95) + 6.5}, {0.15 * sin(5) + 0.15 * sin(95)}) -- ({0.15 * cos(95) + 6.5}, {0.15 * sin(95)});

			\draw[->] (0 + 6.5,0)--({1.7 * cos(120 - 2.5) + 6.5},{1.7 * sin(120 - 2.5)});
			\node at ({2.3 * cos(120 - 2.5) + 6.5},{2.3 * sin(120 - 2.5)}) {$\left(\begin{smallmatrix}
				x_2^\top M_1 x_2\\
				x_2^\top M_2 x_2
			\end{smallmatrix}\right)$};

			\draw[->] (0 + 6.5,0)--({1.1 * cos(240 - 10) + 6.5},{1.1 * sin(240 - 10)});
			\node at ({1.85 * cos(240 - 10) + 6.5},{1.85 * sin(240 - 10)}) {$\left(\begin{smallmatrix}
				x_3^\top M_1 x_3\\
				x_3^\top M_2 x_3
			\end{smallmatrix}\right)$};
		\end{tikzpicture}
	\end{center}
  \caption{
The proof of \cref{prop:nec_n_geq_4} assumes that condition (i) in \cref{thm:nec_conditions_two_LME} does not hold for $\set{M_1,M_2}$ and constructs $u_1,u_2,u_3\in\R^n$ such that the vectors $\set{(u_i^\top M_1 u_i, u_i^\top M_2 u_i)}\subseteq\R^2$ are located as shown in the left figure. These vectors certify that condition (i) in \cref{thm:nec_conditions_two_LME} does not hold for $\set{M_1,M_2}$. Indeed, if $\alpha_1M_1 + \alpha_2M_2\in\S^n_+$, then $(\alpha_1,\alpha_2)$ must lie in the intersection of the three halfspaces defined by the $u_i$ vectors (one such halfspace is shaded in blue), whence $(\alpha_1,\alpha_2) = (0,0)$. The proof of \cref{prop:nec_n_geq_4} then observes that for all $x_1,x_2,x_3\in\R^n$ close enough to $u_1,u_2,u_3$, the vectors $\set{(x_i^\top M_1 x_i, x_i^\top M_2 x_i)}\subseteq\R^2$ certify that condition (i) in \cref{thm:nec_conditions_two_LME} also does not hold for $\set{(M_1)_W, (M_2)_W}$ where $W = \spann(\set{x_i})$. Again, the intersection of the corresponding halfspaces is trivial.}
  \label{fig:condition_i_certificate}
\end{figure}

\begin{proof}
Suppose for the sake of contradiction that $\cT(\cM)$ is ROG but condition (i) in \cref{thm:nec_conditions_two_LME} does not hold.

Let $\theta_1 \coloneqq 0$, $\theta_2 \coloneqq 2\pi/3$ and $\theta_3 \coloneqq 4\pi/3$.
Then, using \cref{thm:dines} we can find three vectors $u_1,u_2,u_3\in\R^n$ satisfying
\begin{align}
\label{eq:u_i_three_halfspaces}
\begin{pmatrix}
	u_i^\top M_1 u_i\\
	u_i^\top M_2 u_i
\end{pmatrix} = \begin{pmatrix}
	\cos(\theta_i)\\
	\sin(\theta_i)
\end{pmatrix} \qquad \forall i\in[3].
\end{align}
Note that $u_1, u_2, u_3$ certify that condition (i) does not hold for $\cM$ (see also \cref{fig:condition_i_certificate}):
\begin{align*}
  \set{(\alpha_1,\alpha_2):\, \alpha_1 M_1 + \alpha_2M_2 \succeq 0}&\subseteq \set{(\alpha_1,\alpha_2):\, u_i^\top (\alpha_1M_1 + \alpha_2M_2) u_i \geq 0 ,\,\forall i\in[3]}\\
                                                                   &=\set{(\alpha_1,\alpha_2):\, \ip{\begin{pmatrix}
                                                                         \alpha_1\\\alpha_2
                                                                       \end{pmatrix}, \begin{pmatrix}
                                                                         u_i^\top M_1 u_i\\
                                                                         u_i^\top M_2 u_i
                                                                       \end{pmatrix}}\geq 0,\,\forall i\in[3]}=\set{(0,0)}.
\end{align*}

Next, by \cref{lem:rank_three_lin_comb}, there exists $M_\beta\coloneqq
\beta_1M_1+\beta_2M_2$ with rank at least three. Let $v_1,v_2,v_3\in\R^n$ be
orthonormal eigenvectors of $M_\beta$ corresponding to nonzero eigenvalues.
Note that $v_1,v_2,v_3$ certify that condition (ii) does not hold for $\cM$:
\begin{align*}
\det\left( \begin{pmatrix}
    v_1^\top\\
    v_2^\top\\
    v_3^\top
  \end{pmatrix}M_\beta \begin{pmatrix}
    v_1 & v_2 & v_3
  \end{pmatrix} \right) \neq 0 \implies \rank(M_\beta)\geq 3.
\end{align*}

We will use the vectors $\set{u_i}$ and $\set{v_i}$ to construct a
three-dimensional subspace
$W\subseteq\R^n$ and show that the certificates of neither conditions (i) nor
(ii) holding in $\cM$ can be used to find certificates of neither conditions (i)
nor (ii) holding in $\set{(M_1)_W, (M_2)_W}$.

Let $\mu\in(0,1]$ to be fixed later.
Define $x_i \coloneqq (1-\mu) u_i + \mu v_i$ and set $W\coloneqq
\spann\set{x_1,x_2,x_3}$.
Let $\overline M_i \coloneqq (M_i)_W$ and set $\overline\cM \coloneqq
\set{\overline M_1,\overline M_2}$. Similarly define $\overline M_\beta$.

We first show that $W$ is a three-dimensional subspace for all $\mu>0$ small
enough. It is clear that $\dim(W)\leq 3$. To see that $\dim(W)\geq 3$ for all
$\mu>0$ small enough, consider the determinant of the orthogonal projections of
the $x_i$ vectors onto $\spann\set{v_1,v_2,v_3}$,
\begin{align*}
\det \left(\begin{pmatrix}
    v_1^\top\\
    v_2^\top\\
    v_3^\top
  \end{pmatrix} \begin{pmatrix}
  x_1 & x_2 & x_3
\end{pmatrix}\right) &=
                       \det \begin{pmatrix}
                         v_1^\top x_1 & v_1^\top x_2 & v_1^\top x_3\\
                         v_2^\top x_1 & v_2^\top x_2 & v_2^\top x_3\\
                         v_3^\top x_1 & v_3^\top x_2 & v_3^\top x_3
                       \end{pmatrix}.
\end{align*}
Recalling that the $x_i$s are each linear in $\mu$, we deduce that this
determinant is a degree-3 polynomial in $\mu$ which is not identically zero
(taking $\mu=1$ gives the determinant of the identity matrix), and thus
$\set{x_i}$ are linearly independent for all $\mu>0$ small enough.

Next, we show that condition (i) does not hold for $\overline\cM$ for all
$\mu>0$ small enough. Note that
\begin{align*}
\set{(\alpha_1,\alpha_2):\, \alpha_1 \overline M_1 + \alpha_2 \overline M_2 \succeq 0} &\subseteq \set{(\alpha_1,\alpha_2):\, x_i^\top \left(\alpha_1 M_1 + \alpha_2 M_2\right) x_i \geq 0 ,\,\forall i\in[3] }\\
&= \set{(\alpha_1,\alpha_2):\,\ip{\begin{pmatrix}
	\alpha_1\\
	\alpha_2
\end{pmatrix}, \begin{pmatrix}
	x_i^\top M_1 x_i\\ x_i^\top M_2 x_i
\end{pmatrix} }\geq 0,\,\forall i\in[3]},
\end{align*}
where the first relation follows from the definition of $\overline M_i$ and noting that $x_i\in W$. 
By continuity of the quadratic forms $x_i^\top M_1x_i$ and $x_i^\top M_2x_i$ in the variable $\mu$, and the choice of the $u_i$ in \cref{eq:u_i_three_halfspaces}, the set on the second line above is the trivial set $\set{0}$ for all $\mu>0$ small enough. Thus, $\overline\cM$ does not satisfy condition (i) for all $\mu>0$ small enough.

Next, we will show that $\overline M_\beta$ has rank three for all $\mu>0$ small enough.
Note that $\overline M_\beta$ is singular if and only if $\det(\overline
M_\beta) = 0$.
Picking the basis $\set{x_1,x_2,x_3}$ of $W$, we have that $\det(\overline
M_\beta) =0$ if and only if
\begin{align*}
  \det\left(\begin{pmatrix}
      x_1^\top\\
      x_2^\top\\
      x_3^\top
    \end{pmatrix} M_\beta \begin{pmatrix}
    x_1&
    x_2&
    x_3
  \end{pmatrix}\right) = 
\det\begin{pmatrix}
	x_1^\top M_\beta x_1 & x_1^\top M_\beta x_2 & x_1^\top M_\beta x_3\\
	x_2^\top M_\beta x_1 & x_2^\top M_\beta x_2 & x_2^\top M_\beta x_3\\
	x_3^\top M_\beta x_1 & x_3^\top M_\beta x_2 & x_3 ^\top M_\beta x_2\end{pmatrix} = 0.
\end{align*}
This is a degree-6 polynomial in $\mu$ (recall that $x_i$s are linear in $\mu$) that is not identically zero: for $\mu = 1$, this determinant evaluates to the product of three nonzero eigenvalues of $M_\beta$. Then, for all $\mu>0$ small enough, this polynomial is nonzero and hence $\rank(\overline M_\beta) = 3$. Thus, we deduce that  $\overline\cM$ does not satisfy condition (ii) for all $\mu>0$ small enough.

We now fix $\mu$ such that $\overline \cM$ does not satisfy either condition (i) or (ii). Note that this also fixes $W$.

To complete the proof we will show that $\cT(\overline \cM)$ is ROG. This will contradict \cref{prop:nec_n=3}.
Note that
\begin{align*}
 \cT(\overline{\cM}) \oplus 0_{W^\perp} = 
\cT(\cM) \cap \set{X\in\S^n_+:\,\ip{0_W \oplus I_{W^\perp}, X} = 0},
\end{align*}
which is a face of $\cT(\cM)$.
Then, as $\cT(\cM)$ is ROG, \cref{lem:rog_iff_faces_rog} implies that $\cT(\overline{\cM})\oplus 0_{W^\perp}$ is ROG. 
Next, note that $\cT(\overline{\cM})\oplus 0_{W^\perp}$ is isomorphic to $\cT(\overline{\cM})$ via the rank-preserving map $X_W \oplus 0_{W^\perp} \mapsto X_W$. We conclude that $\cT(\cM)$ is ROG.\qedhere
\end{proof}

\cref{prop:nec_n_geq_4}, together with \cref{prop:nec_n=2,prop:nec_n=3}, concludes the proof of \cref{thm:nec_conditions_two_LME}.

\subsection{Lifting LMIs into LMEs}\label{sec:lifting_LMI_to_LME}
In this section, we will show that a simple lifting of an LMI set $\cS$ into an LME set $\cT$ in a larger dimension may not preserve the ROG property.

\begin{example}
\label{ex:lifting_rog_to_non_rog}
Consider the set
\begin{align*}
\cS \coloneqq \set{X\in\S^3_+:\, \begin{array}
	{l}
	X_{1,2} = 0\\
	X_{1,3} \geq 0
\end{array}}.
\end{align*}
This set is ROG by \cref{thm:two_LMI_NS} and \cref{lem:rog_iff_faces_rog}.
We can replace the LMIs defining $\cS$ with LMEs in a lifted space as follows: Let $\Pi:\S^4\to\S^3$ denote the projection of a $4\by 4$ matrix onto its top-left $3\by 3$ principal submatrix. Then,
\begin{align*}
\cS = \Pi\left(\set{X\in\S^4:\, \begin{array}
	{l}
	X_{1,2} = 0\\
	X_{1,3} - X_{4,4} = 0
\end{array}}\right) = \Pi\left(\cT(\set{M'_1,M'_2})\right),
\end{align*}
where
\begin{align*}
M'_1 \coloneqq \begin{pmatrix}
	0 & 1/2 & 0 & 0\\
	1/2 & 0 & 0 & 0\\
	0 & 0 & 0 & 0\\
	0 & 0 & 0 & 0
\end{pmatrix}
\qquad\text{and}\qquad
M'_2 \coloneqq \begin{pmatrix}
	0 & 0 & 1/2 & 0\\
	0 & 0 & 0 & 0\\
	1/2 & 0 & 0 & 0\\
	0 & 0 & 0 & -1
\end{pmatrix}.
\end{align*}

Define $\cM'\coloneqq\set{M'_1,M'_2}$. By \cref{thm:two_LMI_NS}, we see that $\cT(\cM')$ is not ROG. We conclude that the obvious lifting of LMIs into LMEs can take ROG sets $\cS(\cM)$ to non-ROG sets $\cT(\cM')$ (even when there is only a single inequality to lift).\qedhere
\end{example}

\section{Applications of ROG cones}
\label{sec:rog_motivation}

\subsection{Exactness of SDP relaxations of QCQPs}
\label{subsec:QCQP_exactness}
In this subsection, we relate the ROG property of a cone $\cS$ to exactness results
for both homogeneous and inhomogeneous QCQPs and their relaxations.

The following lemma states that a cone $\cS\subseteq\S^n_+$ is ROG if and only if
the SDP relaxation of the corresponding homogeneous QCQP is exact for all
choices of objective function.

\begin{lemma}
Let $\cM\subseteq\S^n$. Then $\cS(\cM)$ is ROG if and only if for every
$M_0\in\S^n$,
\begin{align}
\label{eq:homogeneous_sdp_exactness}
\inf_{X\in \cS(\cM)}\ip{M_0,X} = \inf_{x\in\R^n}\set{\ip{M_0,xx^\top}:\, xx^\top\in\cS(\cM)}.
\end{align}
\end{lemma}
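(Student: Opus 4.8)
The plan is to exploit the fact that both sides of \eqref{eq:homogeneous_sdp_exactness} are infima of a linear functional over a convex cone, so each takes only the values $0$ or $-\infty$. Indeed, $\cS(\cM)$ is a convex cone containing $0$, and the feasible region $\set{x\in\R^n:\, xx^\top\in\cS(\cM)}$ of the right-hand side is also a cone containing the origin (if $xx^\top\in\cS(\cM)$ then $(\sqrt{t}\,x)(\sqrt{t}\,x)^\top = t\,xx^\top\in\cS(\cM)$ for all $t\geq 0$). Hence each optimal value equals $0$ when the objective is nonnegative on the corresponding feasible set (attained at $X=0$, resp.\ $x=0$) and equals $-\infty$ otherwise (by scaling a point of negative objective value to infinity). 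Since the rank-one feasible region is contained in $\cS(\cM)$, the right-hand infimum is over a smaller set, so I always have $\text{LHS}\leq\text{RHS}\leq 0$; thus the two sides can differ only when $\text{LHS}=-\infty$ while $\text{RHS}=0$. Recording this dichotomy first will make both directions short.

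For the forward direction, I would assume $\cS(\cM)$ is ROG and that some $M_0$ yields $\text{LHS}=-\infty$, so there is $X\in\cS(\cM)$ with $\ip{M_0,X}<0$. By \cref{def:ROG} I may write $X=\sum_{i=1}^k x_ix_i^\top$ with each $x_ix_i^\top\in\cS(\cM)$, and linearity of $\ip{M_0,\cdot}$ forces $\ip{M_0,x_jx_j^\top}<0$ for some $j$. This rank-one matrix is feasible for the right-hand side, so $\text{RHS}=-\infty$ as well. Together with the dichotomy (and the fact that $\text{LHS}=0$ trivially gives $\text{RHS}=0$ since $\text{RHS}\leq 0$ always), this establishes equality for every $M_0$.

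For the reverse direction I would argue the contrapositive. If $\cS(\cM)$ is not ROG then, writing $R\coloneqq\conv\!\big(\cS(\cM)\cap\set{xx^\top:\,x\in\R^n}\big)$, we have $\cS(\cM)\neq R$; moreover, by the remark following \cref{def:ROG}, $R$ is a closed convex cone. Choosing $X_0\in\cS(\cM)\setminus R$ and applying the separating hyperplane theorem for a point outside a closed convex cone, I obtain $M_0\in\S^n$ with $\ip{M_0,Y}\geq 0$ for all $Y\in R$ and $\ip{M_0,X_0}<0$. Every rank-one element of $\cS(\cM)$ lies in $R$, so $M_0$ is nonnegative on the right-hand feasible set, giving $\text{RHS}=0$; yet $\ip{M_0,X_0}<0$ with $X_0\in\cS(\cM)$ gives $\text{LHS}=-\infty$. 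Hence \eqref{eq:homogeneous_sdp_exactness} fails for this $M_0$, completing the contrapositive.

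The main obstacle is the separation step in the reverse direction: it requires the rank-one-generated cone $R$ to be closed so that a separating functional through the origin, nonnegative on $R$ and strictly negative at $X_0$, exists. This closedness is precisely the content of the remark after \cref{def:ROG}, so invoking it is the crux; the forward direction is essentially immediate once the cone dichotomy is in place.
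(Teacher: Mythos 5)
Your proof is correct and follows essentially the same route as the paper's: both arguments rest on the observation that each side of \eqref{eq:homogeneous_sdp_exactness} takes only the values $0$ or $-\infty$, together with the closedness of $\conv\left(\cS(\cM)\cap\set{xx^\top:\, x\in\R^n}\right)$ guaranteed by the remark following \cref{def:ROG}. The only difference is cosmetic: the paper invokes the biduality fact that two closed convex cones coincide if and only if their dual cones coincide, whereas you unpack that fact into its proof --- the explicit separating-hyperplane argument in the reverse direction and the rank-one decomposition plus linearity in the forward direction --- so the underlying mathematics is identical.
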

\begin{proof}
By \cref{def:ROG}, $\cS(\cM)$ is ROG if and only if $\cS(\cM)=\conv\left(\cS(\cM) \cap \set{xx^\top:\,
    x\in\R^n}\right)$. 
Moreover, both $\cS(\cM)$ and $\conv\left(\cS(\cM) \cap \set{xx^\top:\,
    x\in\R^n}\right)$ are closed convex cones so that they are equal if and
only if their dual cones are equal. Note that
\begin{align*}
M_0 \in \cS(\cM)^* \iff \inf_{X\in\cS(\cM)}\ip{M_0, X} = 0.
\end{align*}
Similarly,
\begin{align*}
  M_0 \in \left( \conv(\cS(\cM) \cap \set{xx^\top:\, x\in\R^n}) \right)^* &\iff
  \inf_{x\in\R^n} \set{\ip{M_0, xx^\top}:\, xx^\top\in\cS(\cM)}=0.
\end{align*}
Noting that both sides of \eqref{eq:homogeneous_sdp_exactness} can only
take the values $0$ or $-\infty$ completes the proof.\qedhere
\end{proof}

Next, we consider a general QCQP and its SDP relaxation.
Recall that in the general form given in \eqref{eq:qcqp_sdp}, a QCQP and its SDP
relaxation both contain exactly one inhomogeneous equality constraint.
The following lemma relates the ROG property of a cone to SDP exactness results
for its affine slices. This will allow us to apply our main results on spectrahedral cones to spectrahedra arising as the feasible domain of the SDP relaxations in \eqref{eq:qcqp_sdp}.

\begin{lemma}\label{lem:SDPtightness}
Let $\cM\subseteq\S^n$ and $B\in\S^n$.
If $\cS(\cM)$ is ROG, then
\begin{align*}
\inf_{x\in\R^n}\set{x^\top M_0 x:\, \begin{array}
	{l}
	x^\top M x \geq 0,\,\forall M\in\cM\\
	x^\top B x = 1
\end{array}}
= \inf_{X\in\S^n}\set{\ip{M_0, X}:\, \begin{array}
	{l}
	\ip{M, X}\geq 0,\,\forall M\in\cM\\
	\ip{B,X} = 1\\
	X\succeq 0
\end{array}}
\end{align*}
for all $M_0\in\S^n$ for which the optimum SDP objective value is bounded from below. In particular, this equality holds whenever the SDP feasible domain is bounded.
\end{lemma}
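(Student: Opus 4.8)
The plan is to project everything into $\R^2$ and reason about a two-dimensional convex cone. The relaxation inequality ($\le$) is immediate: any $x$ feasible for the QCQP yields $X=xx^\top$ feasible for the SDP with the same objective, so the SDP value is at most the QCQP value. It remains to prove the reverse, that the QCQP value $p^\ast$ is at most the SDP value $v^\ast$ (we may assume the SDP is feasible, since otherwise both sides are $+\infty$). First I would fix the geometry: let $\Phi:\S^n\to\R^2$ be the linear map $\Phi(X)=(\ip{B,X},\ip{M_0,X})$, let $R\coloneqq\cS(\cM)\cap\set{xx^\top:x\in\R^n}$ be the conical set of rank-one members, and set $C\coloneqq\Phi(\cS(\cM))$ and $C_1\coloneqq\Phi(R)$. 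Since $\cS(\cM)$ is ROG (\cref{def:ROG}), $\cS(\cM)=\conv(R)$, and as $R$ is a cone this equals $\cone(R)$; linearity of $\Phi$ then gives $C=\cone(C_1)$. In these coordinates $v^\ast=\inf\set{t:(1,t)\in C}$ and $p^\ast=\inf\set{t:(1,t)\in C_1}$, and a QCQP-feasible rank-one point is exactly a point of $C_1$ with positive first coordinate, rescaled to have first coordinate $1$.

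The crux (Step 1) is to turn ``bounded below'' into a global supporting inequality: $\ip{M_0,X}\ge v^\ast\ip{B,X}$ for all $X\in\cS(\cM)$, equivalently $t\ge v^\ast\beta$ for every $(\beta,t)\in C$, i.e. $M_0-v^\ast B\in\cS(\cM)^\ast$. This is where the hypothesis is genuinely used and is the main obstacle: because $B$ need not be positive semidefinite, $C$ may contain points with $\beta\le 0$, and the naive approach of decomposing the optimal $X$ into rank-one pieces and rescaling fails precisely on the pieces with nonpositive first coordinate. I would argue by contradiction. Suppose some $(\beta_0,t_0)\in C$ has $t_0<v^\ast\beta_0$, and pick a near-optimal feasible point $(1,\tau_0)\in C$ with $\tau_0$ close to $v^\ast$. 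If $\beta_0>0$, rescaling $(\beta_0,t_0)$ to first coordinate $1$ already yields a point of $C$ on $\set{\beta=1}$ below height $v^\ast$, contradicting the definition of $v^\ast$. If $\beta_0\le 0$, I would push along the ray $(1,\tau_0)+\lambda(\beta_0,t_0)\in C$ and renormalize the first coordinate to $1$, checking that the resulting height tends to $-\infty$ (when $\beta_0=0$ directly, and when $\beta_0<0$ as $\lambda\to-1/\beta_0$ the renormalized height diverges once $\tau_0$ is taken close enough to $v^\ast$). Either way this contradicts $v^\ast>-\infty$.

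Given Step 1, the finish (Step 2) combines ROG with conic Carath\'eodory in $\R^2$. For $\epsilon>0$, choose a feasible SDP point with objective $\tau<v^\ast+\epsilon$, so $(1,\tau)\in C=\cone(C_1)$; conic Carath\'eodory in $\R^2$ lets me write $(1,\tau)=(\beta_1,\tau_1)+(\beta_2,\tau_2)$ with each $(\beta_i,\tau_i)\in C_1$ realized by a rank-one $x_ix_i^\top\in\cS(\cM)$. By Step 1 each slack $\tau_i-v^\ast\beta_i$ is nonnegative, and the two slacks sum to $\tau-v^\ast<\epsilon$. Since $\beta_1+\beta_2=1$, some index has $\beta_i$ bounded below with $\beta_i>0$: either both are positive, so one is $\ge 1/2$, or one is nonpositive, forcing the other to be $\ge 1$. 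Rescaling that $x_i$ by $1/\sqrt{\beta_i}$ produces a QCQP-feasible rank-one point of objective $v^\ast+(\tau_i-v^\ast\beta_i)/\beta_i\le v^\ast+2\epsilon$.

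Letting $\epsilon\to 0$ gives $p^\ast\le v^\ast$, hence equality. The final ``in particular'' assertion is then immediate, since a bounded feasible domain forces the SDP value to be finite and is thus a special case of the hypothesis. I expect Step 1 to be the only delicate part: the two-dimensional convex-cone reduction and the Carath\'eodory extraction are routine, but correctly exploiting boundedness to rule out the indefinite, negative-$\beta$ directions is exactly what makes the inhomogeneous statement go through.
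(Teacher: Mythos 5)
Your proof is correct, but it takes a genuinely different route from the paper's. The paper argues primally: it takes an arbitrary feasible SDP solution $X$, uses the ROG property to write $X=\sum_i x_ix_i^\top$ with each $x_ix_i^\top\in\cS(\cM)$, reorders and (if necessary) subdivides terms so that the leading terms have positive values $x_i^\top Bx_i$ summing exactly to one, and splits $X=\hat X+\tilde X$ accordingly; since $\ip{B,\tilde X}=0$ and $\tilde X\in\cS(\cM)$, the matrix $\tilde X$ is a recession direction of the SDP feasible set, so boundedness below forces $\ip{M_0,\tilde X}\geq 0$, and a convex-averaging step then gives $\ip{M_0,X}\geq \min_i \hat x_i^\top M_0 \hat x_i \geq$ the QCQP value. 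You instead work in the planar image under $\Phi=(\ip{B,\cdot},\ip{M_0,\cdot})$: your Step 1 uses only two-dimensional convex-cone geometry and boundedness (not ROG) to establish the supporting inequality $M_0-v^\ast B\in\cS(\cM)^*$ --- the three cases ($\beta_0>0$ rescale; $\beta_0=0$ recede; $\beta_0<0$ push toward the vertical axis after choosing $\tau_0<t_0/\beta_0$, which is possible since $t_0/\beta_0>v^\ast$) are all sound --- and Step 2 combines ROG, conic Carath\'eodory in $\R^2$, and slack accounting to extract a near-optimal rank-one feasible point. Both proofs confront the same crux, the rank-one pieces with nonpositive $\ip{B,x_ix_i^\top}$, via boundedness, but through different mechanisms. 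What each buys: the paper's argument is constructive from \emph{any} feasible $X$, immediately producing a QCQP feasible point whose objective is no worse than $\ip{M_0,X}$ (a feature the paper emphasizes in the remark following the lemma); yours isolates the dual-type certificate $M_0-v^\ast B\in\cS(\cM)^*$ as a clean intermediate fact the paper never states, and your two-term Carath\'eodory decomposition echoes the rank-two generation phenomenon of \cref{lem:SDP_rank2_exactness}, at the cost of an $\epsilon$-argument that yields near-optimal solutions rather than a solution tied to each feasible $X$.
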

\begin{proof}
Let $\cS \coloneqq \cS(\cM)$.

$(\geq)$ This direction is immediate as the SDP gives a relaxation of the QCQP.

$(\leq)$ We may assume without loss of generality that the SDP is feasible. Let $X$ be a feasible SDP solution.
As $X\in\cS$ and $\cS$ is an ROG cone, there exist $x_1,\dots,x_r\in\R^n$ such that $x_ix_i^\top\in\cS$ for all $i\in[r]$ and $X = \sum_{i=1}^r x_ix_i^\top$. That is, we have $x_i^\top Mx_i \geq 0$ for all $M\in\cM$ and $i\in[r]$.
Without loss of generality, $x_i^\top B x_i$ is non-increasing in $i$ and there exists some $k\in[r]$ such that $x_1^\top Bx_1,\dots, x_k^\top B x_k$ are positive scalars summing to one.
Indeed, if this were to fail, we could first rearrange the indices in $[r]$ to get
$x_i^\top B x_i$ in non-increasing order and then subdivide the first term
$x_kx_k^\top$ for which $\sum_{i=1}^k x_i^\top B x_i \geq 1$
into two terms $\left(\sqrt{\alpha}x_k\right)\left(\sqrt{\alpha}x_k\right)^\top +
\left(\sqrt{1-\alpha}x_k\right)\left(\sqrt{1-\alpha}x_k\right)^\top$ (naturally,
also increasing $r$ to $r+1$)
so that the first $k$-many values of $x_i^\top B x_i$ are positive and sum to one. 
From here on we assume that such a transformation has been done (if needed), 
and $r$ reflects the final number of summands in this decomposition of $X$.

We may then write 
\begin{align*}
X = \hat X + \tilde X \coloneqq \left(\sum_{i=1}^k x_ix_i^\top\right) + \left(\sum_{i=k+1}^r x_ix_i^\top\right).
\end{align*}

Note that $\ip{B, \tilde X} = \ip{B, X} - \ip{B,\hat X} = 1-1=0$. Moreover, because the  optimum SDP objective value is bounded from below, we must have $\ip{M_0,\tilde X} \geq 0$.

For $i\in[k]$, define $\mu_i \coloneqq x_i^\top Bx_i>0$ and $\hat x_i \coloneqq x_i /\sqrt{\mu_i}$.
Then, $\hat x_i^\top B \hat x_i = 1$ and $\hat x_i ^\top M \hat x_i \geq 0$ for all $M\in\cM$ and $i\in[k]$. Finally, note that $1= \sum_{i=1}^k x_i^\top B x_i = \sum_{i=1}^k \mu_i$. Using these facts, we deduce
\begin{align*}
&\ip{M_0,X}
\geq \ip{M_0,\hat X}
= \sum_{i=1}^k x_i^\top M_0 x_i
= \sum_{i=1}^k \mu_i \hat x_i^\top M_0 \hat x_i\\
&\qquad\geq \min_{i\in[k]}\hat x_i^\top M_0 \hat x_i
\geq \inf_{x\in\R^n}\set{x^\top M_0 x:\, \begin{array}
	{l}
	x^\top M x \geq 0,\,\forall M\in\cM\\
	x^\top B x = 1
\end{array}}.
\end{align*}
The desired result follows by taking the infimum of this inequality over feasible solutions $X$ to the SDP.\qedhere
\end{proof}
\begin{remark}
\cref{lem:SDPtightness} extends \cite[Lemma 1.2]{hildebrand2016spectrahedral}, which shows that the same statement holds in the case of finitely many LMEs. The proof we present is new and immediately shows how to construct a QCQP feasible solution achieving the SDP value (or a sequence approaching the SDP value).\qedhere
\end{remark}

\begin{example}
  \label{ex:sdp_exactness_not_ROG}
  The reverse implication in \cref{lem:SDPtightness} is not true in general. In
  particular, consider the following example. Let
  \begin{align*}
    \cS = \set{\begin{pmatrix}
        \alpha & & \\ & \beta & \\ && \beta
      \end{pmatrix}:\, \alpha,\beta\geq 0} \subseteq\S^3_+,
  \end{align*}
  and set $B = e_1e_1^\top$. Note that $\cS$ has a rank-two extreme ray and thus is not ROG. Let
  $M_0 \in\S^3$. 
  A short calculation shows that the SDP relaxation of the QCQP defined by $M_0$ and $\cM$ associated with $\cS$ satisfies
  \begin{align*}
    \inf_{X\in\S^3}\set{\ip{M_0, X}:\, \begin{array}
                                         {l}
                                         X \in \cS\\
                                         X_{1,1} = 1
                                       \end{array}}
    = \begin{cases}
      (M_0)_{1,1} & \text{if } (M_0)_{2,2} + (M_0)_{3,3} \geq 0,\\
      -\infty & \text{else}.
    \end{cases}
  \end{align*}
  In particular, if $M_0\in\S^3$ is such that the optimum value of the SDP
  relaxation is bounded below, then the SDP relaxation takes the value
  $(M_0)_{1,1}$. On the other hand, $e_1e_1^\top\in \cS$ is a rank-one matrix
  achieving the same objective value. We deduce that
  \begin{align*}
    \inf_{x\in\R^3}\set{x^\top M_0 x:\ \begin{array}
                                         {l}
                                         xx^\top \in\cS\\
                                         (xx^\top)_{1,1} = 1
                                       \end{array}}
    =
    \inf_{X\in\S^3} \set{\ip{M_0, X}:\ \begin{array}
                                         {l}
                                         X\in\cS\\
                                         X_{1,1} = 1
                                       \end{array}}
  \end{align*}
  for all $M_0\in\S^3$ for which the right hand side is bounded below. \qedhere
\end{example}

\cref{lem:SDPtightness} implies that equality holds in \eqref{eq:qcqp_sdp} whenever $\cS(\set{M_1,\dots,M_m})$ is ROG and the SDP optimum value is bounded from below.
It may be natural to ask whether the boundedness assumption can be dropped in
the case where $B$ is specialized to $B=e_1e_1^\top$. Indeed, this is the only
case we need when analyzing \eqref{eq:qcqp_sdp}. The following example shows that this is not possible.
\begin{example}\label{ex:no_SDP_tightness_wo_boundedness}
Let $n = 2$ and $\cM = \set{\Sym(e_1e_2^\top), -\Sym(e_1e_2^\top)}$ so that
\begin{align*}
\cS(\cM) = \set{\begin{pmatrix}
	x_1^2 & 0\\0 & x_2^2
\end{pmatrix}:\, x\in\R^2} = \conv\left(\set{\begin{pmatrix}
	x_1\\0
\end{pmatrix}\begin{pmatrix}
	x_1\\0
\end{pmatrix}^\top:\, x_1\in\R}\cup\set{\begin{pmatrix}
	0\\x_2
\end{pmatrix}\begin{pmatrix}
	0\\x_2
\end{pmatrix}^\top:\, x_2\in\R}\right).
\end{align*}
The representation on the right shows that $\cS(\cM)$ is ROG. On the other hand, taking $B = e_1e_1^\top$ and $M_0 = -e_2e_2^\top$, we have
\begin{align*}
\inf_{x\in\R^2}\set{x^\top M_0 x:\, \begin{array}
	{l}
	xx^\top \in\cS(\cM)\\
	x^\top Bx = 1
\end{array}} &= \inf_{x\in\R^2}\set{-x_2^2:\, \begin{array}
	{l}
	x_1x_2 = 0\\
	x_1^2 = 1
\end{array}}= 0,
\end{align*}
which is not equal to
\begin{align*}
\inf_{X\in\S^2} \set{\ip{M_0, X} :\, \begin{array}
	{l}
	X \in\cS(\cM)\\
	\ip{B,X} = 1
\end{array}}
&= \inf_{x\in\R^2} \set{-x_2^2 :\, \begin{array}
	{l}
	x_1^2 = 1
\end{array}} = -\infty.\qedhere
\end{align*}
\end{example}

In a sense, \cref{ex:no_SDP_tightness_wo_boundedness} exhibits a particular worst-case behavior.
Specifically, adding an arbitrary inhomogeneous constraint to a ROG cone produces a set that is rank-two generated.
\begin{lemma}\label{lem:SDP_rank2_exactness}
Let $\cM\subseteq\S^n$. If $\cS(\cM)$ is ROG, then for all $B\in\S^n$,
\begin{align*}
\conv\left(\set{X\in\S^n:\, \begin{array}
	{l}
	\ip{M,X}\geq 0,\,\forall M\in\cM\\
	\ip{B,X} = 1\\
	X\succeq 0\\
	\rank(X) \leq 2
\end{array}}\right)
&=\set{X\in\S^n:\, \begin{array}
	{l}
	\ip{M,X}\geq 0,\,\forall M\in\cM\\
	\ip{B,X} = 1\\
	X\succeq 0
\end{array}}.
\end{align*}
In particular, when $\cS(\cM)$ is ROG, for any $M_0\in\S^n$, there exists a sequence of rank-two solutions approaching the SDP optimum value in \eqref{eq:qcqp_sdp}.
\end{lemma}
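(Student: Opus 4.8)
The plan is to prove the stated set equality by establishing both inclusions, the nontrivial one being that every $X$ in the right-hand side slice $F \coloneqq \{X \in \cS(\cM) : \ip{B,X} = 1\}$ is a convex combination of rank-$\le 2$ members of $F$. The reverse inclusion is immediate: the rank-$\le 2$ matrices generating the left-hand side all lie in $F$, and $F$ is convex, being the intersection of the convex cone $\cS(\cM)$ with the affine hyperplane $\{\ip{B,\cdot} = 1\}$; hence their convex hull is contained in $F$.

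For the forward inclusion I would fix $X \in F$ and invoke the ROG property (\cref{def:ROG}) to write $X = \sum_{i=1}^r x_i x_i^\top$ with each $x_i x_i^\top \in \cS(\cM)$. Writing $b_i \coloneqq \ip{B, x_i x_i^\top}$, we have $\sum_i b_i = \ip{B,X} = 1$. The goal then reduces to reassembling these rank-one terms into finitely many rank-$\le 2$ matrices lying in $\cS(\cM)$, each with strictly positive $B$-value: normalizing each such piece by its $B$-value exhibits $X$ as a convex combination of rank-$\le 2$ elements of $F$, the weights (the $B$-values) summing to $\ip{B,X} = 1$.

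The reassembly is a small bipartite transportation argument. Let $P \coloneqq \{p : b_p > 0\}$ and $N \coloneqq \{q : b_q \le 0\}$; since $\sum_i b_i = 1$ we have $B_+ \coloneqq \sum_{p \in P} b_p = 1 + B_- > B_- \coloneqq \sum_{q \in N}(-b_q) \ge 0$. I would split each nonpositive term across the positive terms in proportion to their weights, setting $r_{pq} \coloneqq b_p / B_+$ so that $\sum_{p \in P} r_{pq} = 1$, and pair it with a portion $s_{pq} \coloneqq (-b_q)/B_+ + \epsilon_{pq}$ of $x_p x_p^\top$ for small $\epsilon_{pq} > 0$. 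Each resulting piece $Q_{pq} \coloneqq s_{pq}\, x_p x_p^\top + r_{pq}\, x_q x_q^\top$ is a conic combination of two elements of $\cS(\cM)$, hence lies in $\cS(\cM)$ and has rank at most two, and a direct computation gives $\ip{B, Q_{pq}} = b_p\, \epsilon_{pq} > 0$. Because $B_- / B_+ < 1$, the $\epsilon_{pq}$ can be chosen small enough that $\sigma_p \coloneqq 1 - \sum_{q \in N} s_{pq} > 0$ for every $p \in P$, leaving pure pieces $\sigma_p\, x_p x_p^\top$ of positive $B$-value $\sigma_p b_p$. Summing all pieces recovers $X$, since $\sigma_p + \sum_{q \in N} s_{pq} = 1$ for each $p \in P$ and $\sum_{p \in P} r_{pq} = 1$ for each $q \in N$.

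The main subtlety — and the reason a naive pairing of whole terms fails — is that a single nonpositive term may have $B$-magnitude exceeding that of every individual positive term, so covering it with positive $B$-value while keeping each combined piece of rank at most two forces one to \emph{split} that nonpositive rank-one term and distribute it across several positive terms; the proportional allocation above handles this uniformly. Finally, the ``in particular'' claim follows because for any $M_0 \in \S^n$ the linear functional $\ip{M_0, \cdot}$ has the same infimum over $F$ as over its rank-$\le 2$ generators $G \coloneqq \{X \in F : \rank(X) \le 2\}$, a linear function attaining the same infimum over a set and over its convex hull; hence any minimizing sequence for the SDP over $F$ may be taken within $G$, i.e.\ among rank-$\le 2$ feasible points.
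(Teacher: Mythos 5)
Your proof is correct, and it takes a genuinely different route from the paper's. The shared skeleton is the same: the reverse inclusion follows from convexity, and the forward inclusion starts from the ROG decomposition $X=\sum_{i=1}^r x_ix_i^\top$ with each $x_ix_i^\top\in\cS(\cM)$ and the observation that the $B$-values of the pieces sum to one. From there the paper argues by induction on rank: it first invokes \cref{lem:caratheodory} to take $r=\rank(X)$, then pairs a single positive-$B$ term with a single nonpositive-$B$ term and runs an intermediate-value argument along the path from $(1,0)$ through $(1,1)$ to $(0,1)$ to find coefficients $(\alpha_1,\alpha_2)$, at least one of which equals $1$, such that $\mu=\alpha_1\beta_1+\alpha_2\beta_2\in(0,1)$; this splits $X=\mu X_\ell+(1-\mu)X_r$ with $X_\ell$ a rank-$\le 2$ element of the left-hand set and $X_r$ back in the right-hand set with strictly fewer rank-one summands, so the procedure terminates. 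You instead eliminate all of the nonpositive $B$-mass in one shot: each nonpositive term is split proportionally ($r_{pq}=b_p/B_+$) across the positive terms, and each resulting pair is padded by $\epsilon_{pq}>0$ worth of the positive term so that $\ip{B,Q_{pq}}=\epsilon_{pq}b_p>0$; the inequality $B_-/B_+<1$ leaves room to choose the $\epsilon_{pq}$ so that the leftover pure pieces $\sigma_p x_px_p^\top$ keep positive weight, and the row and column sums confirm the pieces reassemble to $X$. I verified this bookkeeping and it is sound (note also that $P\neq\emptyset$ is automatic since $\sum_i b_i=1$). The trade-offs: the paper's recursion needs no perturbation parameters and yields a convex combination with few pieces---one rank-$\le 2$ piece peeled per step, at most $\rank(X)$ steps---whereas your transportation scheme avoids induction altogether and makes explicit the obstruction that a single negative term can outweigh every individual positive term, at the cost of roughly $\abs{P}\left(\abs{N}+1\right)$ pieces and the $\epsilon$-bookkeeping. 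Your treatment of the ``in particular'' claim---a linear functional has the same infimum over a set as over its convex hull---is exactly what the paper leaves implicit.
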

\begin{proof}
Let $\cL$ denote the inner set on the left hand side so that the left hand side is $\conv(\cL)$ and let $\cR$ denote the right hand set.

$(\subseteq)$ This follows upon noting that $\cL\subseteq\cR$ and $\cR$ is convex.

$(\supseteq)$ Let $X\in\cR$. As $\cR\subseteq\cS(\cM)$, we may decompose $X = \sum_{i=1}^r x_ix_i^\top$ where $x_ix_i^\top \in\cS(\cM)$ for all $i\in[r]$. We may assume that $r = \rank(X)$ by \cref{lem:caratheodory}.
Let $\beta_i\coloneqq \ip{B,x_ix_i^\top}$.

If $\beta_i>0$ for all $i\in[r]$, then we are done. Else, without loss of generality $\beta_1>0\geq \beta_2$.
Consider the value of $\mu\coloneqq\alpha_1\beta_1+\alpha_2\beta_2$ as $(\alpha_1,\alpha_2)$ moves continuously on the line segments $(1,0)\to(1,1)\to(0,1)$. Noting that $\beta_1>0$ and $\beta_2\leq 0$, we may fix $(\alpha_1,\alpha_2)$ on this path such that $\mu\in(0,1)$. Then, we can decompose
\begin{align*}
X = \mu\left(\frac{\alpha_1x_1x_1^\top + \alpha_2x_2x_2^\top}{\mu}\right) + (1-\mu)\left(\frac{X - \alpha_1x_1x_1^\top -\alpha_2x_2x_2^\top}{1-\mu}\right) \eqqcolon \mu X_{\ell} + (1-\mu) X_r.
\end{align*}
We have written $X$ as a convex combination of two matrices $X_{\ell}$ and $X_r$. It can be verified easily that $X_{\ell}\in\cL$ and $X_r\in\cR$. As at least one of $\alpha_1$ or $\alpha_2$ takes the value $1$, the element $X_r$ has rank strictly less than $r$. Iterating this procedure completes the proof.\qedhere
\end{proof}

\begin{remark}
A result similar to \cref{lem:SDP_rank2_exactness} in the case of a single \emph{homogeneous} constraint is presented in \cite[Lemma 5]{burer2015gentle}. Specifically, it is shown that for an arbitrary closed convex cone $\cS$, the extreme rays of the set obtained by intersecting  $\cS$ with a hyperplane through the origin can be written as convex combinations of at most two extreme rays of $\cS$.\qedhere
\end{remark}

\subsection{Convex hulls of bounded quadratically constrained sets}
Consider a set
\begin{align*}
\cY\coloneqq \set{y\in\R^{n-1}:\, q_i(y) \geq 0 ,\,\forall i\in[m]},
\end{align*}
where $q_i$s are quadratic functions of the form $q_i(y) = y^\top A_i y + 2b_i^\top y + c_i$. Let $M_i\coloneqq \left(\begin{smallmatrix}
	c_i & b_i^\top\\b_i & A_i
\end{smallmatrix}\right)$ and $\cM\coloneqq\set{M_1,\dots,M_m}$.

We begin by proving a technical lemma that will be useful in the remainder of this section. This lemma states that under a definiteness assumption, the set $\cY$, its projected SDP relaxation, and its SDP relaxation are each compact.
\begin{lemma}
\label{lem:cY_compact}
Suppose there exists $\lambda^*\in\R^m_+$ such that $\sum_{i=1}^m \lambda^* A_i$ is negative definite. Then, the following three sets are each compact:
\begin{align*}
&\cY \coloneqq \set{y\in\R^{n-1}:\, \begin{array}
	{l}
	y^\top A_i y + 2\ip{b_i, y} + c_i \geq 0 ,\,\forall i\in[m]
\end{array}},\\
&\set{y\in\R^{n-1}:\, \begin{array}
	{l}
	\exists Y\succeq yy^\top:\\
	\ip{A_i, Y} + 2\ip{b_i, y} + c_i \geq 0,\,\forall i\in[m]
\end{array}},\text{ and}\\
&\set{X\in\S^n_+:\, \begin{array}
	{l}
	\ip{M_i, X} \geq 0 ,\,\forall i\in[m]\\
	\ip{e_1e_1^\top, X} = 1
\end{array}}.
\end{align*}
\end{lemma}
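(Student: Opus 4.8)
The plan is to prove compactness of the third set (the SDP relaxation) directly, and then obtain the second set as its continuous image and the first set $\cY$ as a closed subset of the second. The sets $\cY$ and the SDP set are closed, being defined by finitely many non-strict inequalities, so for these two the only real issue is boundedness; compactness of the second (projected) set will come for free from the continuous-image argument.

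First I would handle the SDP set $\cX \coloneqq \set{X \in \S^n_+ :\, \ip{M_i, X} \geq 0\ \forall i,\ \ip{e_1e_1^\top, X} = 1}$. Writing a generic $X\in\cX$ in block form as $X = \left(\begin{smallmatrix} 1 & v^\top \\ v & W\end{smallmatrix}\right)$ with $v\in\R^{n-1}$, $W\in\S^{n-1}$ (using $X_{1,1}=1$), we have $\ip{M_i, X} = \ip{A_i, W} + 2\ip{b_i, v} + c_i$. Aggregating these inequalities with the multipliers $\lambda^*$ yields $\ip{A, W} + 2\ip{b, v} + c \geq 0$, where $A \coloneqq \sum_i \lambda^*_i A_i \prec 0$, $b\coloneqq\sum_i\lambda^*_i b_i$, and $c\coloneqq\sum_i\lambda^*_i c_i$. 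Since $X\succeq 0$ with $X_{1,1}=1$, the Schur complement condition gives $W\succeq vv^\top$; as $A\prec 0$, this forces $\ip{A, W}\leq \ip{A, vv^\top} = v^\top A v$, so that $v^\top A v + 2\ip{b, v} + c \geq 0$. Because $A$ is negative definite, this last inequality confines $v$ to a bounded ellipsoid.

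Next I would bound $W$. Having bounded $v$, the quantity $2\ip{b, v}+c$ is bounded above by a constant $R$ over $\cX$, and the aggregated inequality rearranges to $\ip{-A, W}\leq 2\ip{b, v}+c\leq R$. Since $-A\succ 0$ and $W\succeq 0$, we have $\ip{-A, W}\geq \lambda_{\min}(-A)\tr(W)$, whence $\tr(W)\leq R/\lambda_{\min}(-A)$. A positive semidefinite matrix with bounded trace lies in a bounded set, so $W$ is bounded; together with the bound on $v$ and $X_{1,1}=1$ this shows $\cX$ is bounded, hence compact. For the second set, observe that it is exactly the image of $\cX$ under the continuous projection $X\mapsto v$ onto the off-diagonal block of the first column: given a feasible $y$ with witness $Y\succeq yy^\top$ the matrix $\left(\begin{smallmatrix} 1 & y^\top \\ y & Y\end{smallmatrix}\right)$ lies in $\cX$ and projects to $y$, and conversely every $X\in\cX$ projects to a feasible $y$ with witness $W$. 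As the continuous image of a compact set, the second set is compact. Finally $\cY$ sits inside the second set (take $Y=yy^\top$) and is closed, so it is a closed subset of a compact set and hence compact.

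The main obstacle is the boundedness argument for $\cX$. The key insight is that the single aggregated constraint $\ip{A, W} + 2\ip{b, v} + c\geq 0$ with $A\prec 0$ simultaneously controls both the linear part $v$, via the Schur relation $W\succeq vv^\top$, and the trace of the quadratic part $W$, via $\ip{-A, W}\geq \lambda_{\min}(-A)\tr(W)$; everything else reduces to the facts that linear projections are continuous and that closed subsets of compact sets are compact.
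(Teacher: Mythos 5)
Your proof is correct, and its overall architecture is the same as the paper's: both establish compactness of the third (SDP) set first, then obtain the second set as the continuous image of the SDP set under the projection $\left(\begin{smallmatrix} 1 & y^\top \\ y & Y\end{smallmatrix}\right)\mapsto y$, and finally obtain $\cY$ as a closed subset of the second set. Where you differ is in the boundedness argument for the SDP set, which is the only nontrivial step. The paper uses a one-shot certificate: writing $M^* \coloneqq \sum_{i=1}^m \lambda^*_i M_i$, it notes that the constraint $\ip{e_1e_1^\top,X}=1$ gives $\ip{M^* - \mu e_1e_1^\top, X}\geq -\mu$ for every $\mu\in\R$, and that for $\mu$ large enough the shifted matrix $M^* - \mu e_1e_1^\top$ is negative definite (its lower-right block $A^*=\sum_i\lambda_i^*A_i$ is already negative definite, so a large shift of the $(1,1)$ entry makes the whole matrix negative definite); a lower bound on the inner product of a PSD matrix against a negative definite matrix then bounds $\tr(X)$, hence $X$, in one stroke. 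Your argument instead splits $X$ into blocks and bounds them in two stages: the Schur complement relation $W\succeq vv^\top$ together with $A\prec 0$ traps $v$ in an ellipsoid, and then the trace inequality $\ip{-A,W}\geq \lambda_{\min}(-A)\tr(W)$ bounds $W$. Both arguments rest on exactly the same two ingredients---aggregation by $\lambda^*$ and negative definiteness of $\sum_i\lambda^*_iA_i$---so the difference is one of execution rather than concept: the paper's version is more compact and bounds the full matrix at once, while yours makes explicit how the single aggregated constraint controls the linear and quadratic blocks separately, which arguably makes the geometry more transparent.
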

\begin{proof}
For convenience, let $\cR_1$, $\cR_2$, $\cR_3$ denote the three sets in the lemma statement.
Let $A^* \coloneqq \sum_{i=1}^m \lambda^*_i A_i$. Similarly define $b^*$ and $c^*$. Note in particular that $A^*$ is negative definite.

To see that $\cR_3$ is compact, note that if $X\in\cR_3$, then for all $\mu\in\R$ we have 
\begin{align*}
\ip{\begin{pmatrix}
	c^* - \mu & (b^*)^\top\\
	b^* & A^*
\end{pmatrix}, X} \geq -\mu.
\end{align*}
By picking $\mu$ large enough, we can ensure that the matrix on the left hand side of this inequality is negative definite. We conclude that $\cR_3$ is bounded, whence compact.

Note that the $\cR_2$ is the image of the compact set $\cR_3$ under the continuous map $\left(\begin{smallmatrix}
	1 & y^\top\\
	y & Y
\end{smallmatrix}\right)\mapsto y$ so that $\cR_2$ is compact.

Finally, note that $\cR_1\subseteq\cR_2$ so that $\cR_1$ is bounded. As $\cR_1$ is closed, it is also compact.\qedhere
\end{proof}

The following lemma gives an explicit description of $\conv(\cY)$ under the
assumption that $\cS(\cM)$ is ROG and $\cY$ satisfies the above definiteness assumption.
\begin{proposition}
\label{prop:conv_hull_bounded_quadratic_set}
Suppose there exists $\lambda^*\in\R^m_+$ such that $\sum_{i=1}^m \lambda^*_i A_i$ is negative definite.
If $\cS(\cM)$ is ROG, then $\conv(\cY)$ is a semidefinite-representable set given by 
\begin{align}
\label{eq:conv_hull_bounded_quadratic_set}
\conv(\cY) = \set{y\in\R^{n-1}:\, \begin{array}
	{l}
	\exists Y \succeq yy^\top:\\
	\ip{A_i, Y} + 2\ip{b_i,y} + c_i \geq 0 ,\,\forall i\in[m]
\end{array}} .
\end{align}
\end{proposition}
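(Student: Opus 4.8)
The plan is to establish the claimed identity by proving the two inclusions directly, writing $\cR$ for the semidefinite-representable set on the right-hand side of \eqref{eq:conv_hull_bounded_quadratic_set}. The organizing observation is that $\cR$ is precisely the image of the spectrahedron
\[
\cR' \coloneqq \cS(\cM)\cap\set{X\in\S^n:\,\ip{e_1e_1^\top,X}=1}
\]
under the projection $\left(\begin{smallmatrix}1 & y^\top\\ y & Y\end{smallmatrix}\right)\mapsto y$: by the Schur complement, $\left(\begin{smallmatrix}1 & y^\top\\ y & Y\end{smallmatrix}\right)\succeq 0$ is equivalent to $Y\succeq yy^\top$, and for such a matrix $X$ one has $\ip{M_i,X}=\ip{A_i,Y}+2\ip{b_i,y}+c_i$. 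In particular $\cR$ is the projection of a convex set and is therefore convex.

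For the inclusion $\conv(\cY)\subseteq\cR$, I would first check that $\cY\subseteq\cR$: given $y\in\cY$, the choice $Y=yy^\top$ is feasible since $\ip{A_i,yy^\top}+2\ip{b_i,y}+c_i=q_i(y)\geq 0$. Convexity of $\cR$ then immediately gives $\conv(\cY)\subseteq\cR$.

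The substantive direction is $\cR\subseteq\conv(\cY)$, where both the ROG hypothesis and the definiteness assumption enter. Given $y\in\cR$, I would lift it to some $X\in\cR'\subseteq\cS(\cM)$. Since $\cS(\cM)$ is ROG and a cone, I can write $X=\sum_{j=1}^r x_jx_j^\top$ with each $x_jx_j^\top\in\cS(\cM)$, exactly as in the proof of \cref{lem:SDPtightness}. Writing $x_j=\left(\begin{smallmatrix}(x_j)_1\\ z_j\end{smallmatrix}\right)$ with $(x_j)_1\in\R$ and $z_j\in\R^{n-1}$, the crux is to argue that every $x_j$ has $(x_j)_1\neq 0$. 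This is the main obstacle, namely ruling out ``rays at infinity,'' and it is handled precisely by the definiteness hypothesis: if some $x_j$ had $(x_j)_1=0$ and $z_j\neq 0$, then $x_j^\top M_i x_j=z_j^\top A_i z_j\geq 0$ for all $i$, whence $z_j^\top\!\left(\sum_i\lambda_i^* A_i\right)z_j\geq 0$, contradicting $\sum_i\lambda_i^* A_i\prec 0$ together with $z_j\neq 0$.

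Once every $(x_j)_1\neq 0$, I would set $\beta_j\coloneqq (x_j)_1^2>0$ and $w_j\coloneqq z_j/(x_j)_1$, so that $\hat x_j\coloneqq\left(\begin{smallmatrix}1\\ w_j\end{smallmatrix}\right)$ satisfies $\hat x_j\hat x_j^\top=(1/\beta_j)\,x_jx_j^\top\in\cS(\cM)$; hence $q_i(w_j)=\hat x_j^\top M_i\hat x_j\geq 0$ for all $i$, i.e. $w_j\in\cY$. Reading off the $(1,1)$ entry of $X=\sum_j x_jx_j^\top$ yields $\sum_j\beta_j=\ip{e_1e_1^\top,X}=1$, and reading off the first row yields $y=\sum_j(x_j)_1 z_j=\sum_j\beta_j w_j$. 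Thus $y$ is a finite convex combination of points of $\cY$, so $y\in\conv(\cY)$, completing the inclusion and hence the proof. I note that the compactness furnished by \cref{lem:cY_compact} is not strictly required for the set identity, since both inclusions are obtained directly; the definiteness assumption is what does the real work, in the step that excludes the zero-first-coordinate summands.
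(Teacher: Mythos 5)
Your proof is correct, and it takes a genuinely different route from the paper's. The paper first invokes \cref{lem:cY_compact} to conclude that both sides of \eqref{eq:conv_hull_bounded_quadratic_set} are compact, and then proves equality of the two compact convex sets by showing that their support functions coincide; that support-function computation homogenizes the linear objective and appeals to \cref{lem:SDPtightness}, i.e., to SDP exactness for ROG cones under a bounded-objective hypothesis. You instead prove the two inclusions directly: the substantive one lifts $y$ to a feasible $X$ in the slice $\cS(\cM)\cap\set{X:\,\ip{e_1e_1^\top,X}=1}$, applies the ROG decomposition $X=\sum_j x_jx_j^\top$ with $x_jx_j^\top\in\cS(\cM)$, and uses the negative definiteness of $\sum_i\lambda_i^*A_i$ to rule out summands with vanishing first coordinate---precisely the ``recession direction'' summands that the paper's \cref{lem:SDPtightness} instead absorbs into $\tilde X$ via a boundedness-of-objective argument and a rescaling/splitting step. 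Your approach buys three things: it is self-contained (no support functions, no separation, and, as you correctly observe, no compactness is needed for the set identity itself); it is constructive, exhibiting each point of the right-hand side as an explicit finite convex combination $y=\sum_j\beta_j w_j$ of points $w_j\in\cY$, which in particular yields $\conv(\cY)$ rather than merely $\clconv(\cY)$ without any closedness argument; and it isolates exactly where the definiteness hypothesis does its work. The paper's route buys modularity: \cref{lem:SDPtightness} is a reusable black box that also drives the epigraph results (\cref{prop:epi_conv_hull_bounded_quadratic_set} and \cref{cor:epi_conv_hull_bounded_quadratic_set_psd}), whereas your argument is tailored to this particular slice $B=e_1e_1^\top$, where $x^\top Bx=(x)_1^2\geq 0$ makes the sign analysis especially clean. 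The only cosmetic omission is that you should discard identically zero summands from the ROG decomposition before asserting $z_j\neq 0$ when $(x_j)_1=0$; this is immediate and does not affect correctness.
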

\begin{proof}
As the assumptions of \cref{lem:cY_compact} hold, we have that both sides of \eqref{eq:conv_hull_bounded_quadratic_set} are compact. Therefore, it suffices to verify that the support function of $\cY$ and the support function of the set on the right hand side of \eqref{eq:conv_hull_bounded_quadratic_set} are equal.

Let $b_0\in\R^{n-1}$. Then,
\begin{align*}
\inf_{y\in\cY} \ip{b_0,y} &= 
{1\over 2}\inf_{x\in\R^n} \set{x^\top\begin{pmatrix}
	0 & b_0^\top\\
	b_0 & 0_{n-1}
\end{pmatrix}x:\, \begin{array}
	{l}
	x^\top M_i x\geq 0 ,\,\forall i\in[m]\\
	x^\top\left(e_1e_1^\top\right) x = 1
\end{array}}\\
&= {1\over 2} \inf_{X\in\S^n} \set{\ip{\begin{pmatrix}
	0 & b_0^\top\\
	b_0 & 0_{n-1}
\end{pmatrix},X}:\, \begin{array}
	{l}
	\ip{M_i, X}\geq 0 ,\,\forall i\in[m]\\
	\ip{e_1e_1^\top,X} = 1\\
	X\succeq 0
\end{array}}\\
&= \inf_{y\in\R^{n-1}}\set{\ip{b_0, y}:\, \begin{array}
	{l}
	\exists Y \succeq yy^\top:\\
	\ip{A_i, Y} + 2\ip{b_i,y} + c_i \geq 0 ,\,\forall i\in[m]
\end{array}}.
\end{align*}
Here, the first equality follows by writing $x = (1,\, y)$, the second equality follows by \cref{lem:SDPtightness}, and the third equality follows by writing $X = \left(\begin{smallmatrix}
	1 & y^\top\\ y & Y
\end{smallmatrix}\right)$.
\qedhere
\end{proof}

We next turn our attention to the closed convex hull of epigraph sets. Let $q_0$ be a quadratic function of the form $q_0(y) = y^\top A_0 y + 2b_0^\top y + c_0$ and define $M_0 \coloneqq \left(\begin{smallmatrix}
	c_0 & b_0^\top\\ b_0 & A_0
\end{smallmatrix}\right)$.
\begin{proposition}
\label{prop:epi_conv_hull_bounded_quadratic_set}
Suppose there exists $\lambda^*\in\R^m_+$ such that $A_0 - \sum_{i=1}^m \lambda^*_i A_i$ is positive definite.
If $\cS(\cM)$ is ROG, then the closed convex hull of
\begin{align*}
\epi \coloneqq \set{(y,t)\in\R^{n-1}\times \R:\, \begin{array}
	{l}
	q_0(y) \leq t\\
	y\in\cY
\end{array}}
\end{align*}
is a semidefinite-representable set given by
\begin{align*}
\clconv(\epi) = \set{(y,t)\in\R^{n-1}\times \R:\, \begin{array}
	{l}
	\exists Y \succeq yy^\top:\\
	\ip{A_0, Y} + 2\ip{b_0, y} + c_0 \leq t\\
	\ip{A_i, Y} + 2\ip{b_i,y} + c_i \geq 0 ,\,\forall i\in[m]
\end{array}}.
\end{align*}
\end{proposition}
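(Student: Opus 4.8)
The plan is to prove the two sets are equal by recognizing both as epigraphs of closed proper convex functions and comparing only their non-vertical supporting hyperplanes. Write $\cC$ for the set on the right-hand side. Both $\clconv(\epi)$ and $\cC$ are convex and invariant under adding a nonnegative quantity to the $t$-coordinate (the direction $(0_{n-1},1)$ is a recession direction of each), so each is the epigraph of a convex function on $\R^{n-1}$: say $\clconv(\epi)=\epi(\phi)$ and $\cC=\epi(f)$. Since a closed proper convex function equals the supremum of its affine minorants (equivalently, equals its biconjugate), it suffices to show that $\phi$ and $f$ are closed and proper and that $\phi^{*}=f^{*}$; equivalently, that for every $d\in\R^{n-1}$ the two infimal support values $\inf_{(y,t)}\{\ip{d,y}+t\}$ over $\clconv(\epi)$ and over $\cC$ coincide. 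The point of this framing is that it never requires analyzing the vertical directions directly.

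First I would compute these two support values in direction $(d,1)$. For the epigraph, minimizing $t$ for fixed $y$ gives $\inf_{(y,t)\in\epi}\{\ip{d,y}+t\}=\inf_{y\in\cY}\{q_0(y)+\ip{d,y}\}$, which (writing $x=(1,y)$) is the QCQP with constraint matrices $\cM$, normalization $x^\top(e_1e_1^\top)x=1$, and objective matrix $M_0^{(d)}\in\S^n$ representing the quadratic $q_0(y)+\ip{d,y}$. For $\cC$, minimizing first over $t$ and then over $Y$, and using $Y\succeq yy^\top$ with $X=\left(\begin{smallmatrix}1 & y^\top\\ y& Y\end{smallmatrix}\right)$, gives exactly the standard SDP relaxation of this very QCQP. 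Since $\cS(\cM)$ is ROG, \cref{lem:SDPtightness} (with $B=e_1e_1^\top$ and objective $M_0^{(d)}$) equates the QCQP with its SDP relaxation, and hence the two support values, provided the SDP value is bounded below.

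The crux is establishing this boundedness uniformly in $d$, and this is where the hypothesis enters. Aggregating the constraints with $\lambda^{*}$, every feasible $X=\left(\begin{smallmatrix}1 & y^\top\\ y& Y\end{smallmatrix}\right)$ obeys $\ip{M_0^{(d)},X}\ge\ip{M_0^{(d)}-\sum_i\lambda_i^{*}M_i,X}$, and the bottom-right block of $M_0^{(d)}-\sum_i\lambda_i^{*}M_i$ is $A_0-\sum_i\lambda_i^{*}A_i\succ0$; using $Y\succeq yy^\top$, the right-hand side is at least a strictly convex quadratic in $y$, hence bounded below for every $d$. This licenses \cref{lem:SDPtightness} for all $d$, so the two support values agree throughout. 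Separately, I would show $\cC=\epi(f)$ is closed by arguing, exactly as in \cref{lem:cY_compact}, that the definiteness of $A_0-\sum_i\lambda_i^{*}A_i$ makes each slice $\{(y,t)\in\cC:\,t\le T\}$ compact. Then $\epi\subseteq\cC$ (take $Y=yy^\top$) gives $\clconv(\epi)\subseteq\cC$, so $\phi\ge f$, while the $d=0$ bound above shows $f$, and hence $\phi$, is bounded below by a constant; thus both are proper (the degenerate case $\cY=\emptyset$, where by the feasibility construction in \cref{lem:SDPtightness} the SDP is infeasible and both sets are empty, is immediate). With $\phi,f$ closed proper convex and $\phi^{*}=f^{*}$, Fenchel--Moreau yields $\phi=f$, i.e.\ $\clconv(\epi)=\cC$; semidefinite representability is clear from the displayed form of $\cC$.

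The main obstacle, and the reason this does not follow from \cref{prop:conv_hull_bounded_quadratic_set} by the same compactness route, is that $\epi$ is genuinely unbounded in the $t$-direction. A naive comparison of the support functions of the two \emph{sets} would have to treat the vertical directions $(d,\tau)$ with $\tau\le0$, where the associated SDP objective may be unbounded below and \cref{lem:SDPtightness} gives no information. Passing instead through the generating functions $\phi,f$ and their conjugates confines all of the real work to the directions $(d,1)$, on which boundedness is guaranteed, and the vertical directions are then handled automatically once $\phi=f$ is established. The points needing care are therefore the properness and closedness of $\phi$ and $f$ (so that biconjugation is valid) and the uniform-in-$d$ boundedness of the SDP, both of which rest on the positive definiteness of $A_0-\sum_i\lambda_i^{*}A_i$.
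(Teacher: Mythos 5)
Your proposal is correct. It rests on the same two pillars as the paper's proof---\cref{lem:SDPtightness} with $B=e_1e_1^\top$ applied to the tilted objectives $q_0(y)+\ip{d,y}$, and closedness of the right-hand set obtained from boundedness of the certificate matrices $Y$ under $A_0-\sum_{i}\lambda_i^*A_i\succ 0$---but the surrounding convex analysis is organized genuinely differently. The paper proves the inclusion $\supseteq$ by contradiction: it strictly separates a point $(y,t)\notin\clconv(\epi)$, argues that the separating functional can be taken non-vertical (the ``WLOG $\nu>0$'' step, which is where it needs $q_0$ bounded below on $\cY$ and that $e_n$ recedes from $\epi$), and then transfers the separating inequality to the right-hand set via \cref{lem:SDPtightness}. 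You instead realize both sets as epigraphs of closed proper convex functions and equate conjugates, letting Fenchel--Moreau carry out the separation implicitly; only directions $(d,1)$ ever appear, so the $\nu$-normalization step disappears. The two routes are duality-equivalent (biconjugation is itself proved by separation), but yours is more careful on one point the paper treats tersely: \cref{lem:SDPtightness} requires the \emph{SDP} optimal value (not merely the QCQP value) to be bounded below, and you verify this uniformly in $d$ by $\lambda^*$-aggregation directly at the SDP level using $Y\succeq yy^\top$, whereas the paper only asserts that the SDP in its chain of inequalities ``has finite objective value.'' The price of your route is the bookkeeping you correctly identify and discharge: closedness of $\cC$ (bounded certificates, exactly as in the paper and in \cref{lem:cY_compact}), properness of the two epigraph functions, and the degenerate case $\cY=\emptyset$, which you rightly settle by observing that the ROG property forces the SDP itself to be infeasible, via the decomposition argument in the proof of \cref{lem:SDPtightness}.
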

\begin{proof}
Let $\cR$ denote the set on the right.

$(\subseteq)$ By taking $Y=yy^\top$, we have that $\epi\subseteq\cR$. It suffices to show that $\cR$ is both convex and closed. As $\cR$ is the projection of the SDP relaxation (a convex set) of $\epi$, it is itself convex. Next, consider a sequence $(y^{(i)},t^{(i)})\in\cR$ converging to $(y,t)$. Let $Y^{(i)}$ denote a sequence of matrices certifying $(y^{(i)},t^{(i)})\in\cR$. As there exists a $\lambda^*\in\R^m_+$ such that $A_0 - \sum_{i=1}^m \lambda^*_i A_i$ is positive definite, the sequence $Y^{(i)}$ is bounded and hence has a convergent subsequence with limit $Y$. By continuity, we deduce that $(y,t)\in\cR$ and hence $\cR$ is closed.

$(\supseteq)$ Suppose $(y,t)\notin\clconv(\epi)$. 
We will show that $(y,t)\notin\cR$.

First, we claim that $q_0(y)$ is bounded below on $\cY$.
Let $A^* \coloneqq A_0 - \sum_{i=1}^m \lambda^*_i A_i $ and similarly define $b^*$ and $c^*$. Then, for all $y\in\cY$, we have
\begin{align*}
q_0(y) &\geq q_0(y) - \sum_{i=1}^m \lambda^*_i q_i(y)=y^\top A^* y + 2\ip{b^*, y } + c^*\geq - (b^*)^\top (A^*)^{-1}b^* + c^*.
\end{align*}
We deduce that $q_0(y)$ is bounded below on $\cY$.

By the strict hyperplane separation theorem, there exists $(\mu,\nu)\neq(0,0)\in\R^{n-1}\times \R$ such that
\begin{align}
\label{eq:strict_separate_point_epi}
\ip{\mu,y} + \nu t &< \inf_{(y',t')\in\clconv(\epi)}\ip{\mu,y'} + \nu t' = \inf_{(y',t')\in\epi}\ip{\mu,y'} + \nu t'.
\end{align}
We claim that we may assume $\nu>0$ without loss of generality. First, suppose $\cY = \emptyset$. In this case, $\epi=\emptyset$ and any arbitrary $(\mu,\nu)\neq(0,0)$ satisfies \eqref{eq:strict_separate_point_epi}. On the other hand, if $\cY$ is nonempty then $e_n$ is a recessive direction for $\epi$. In particular, as the objective value of the program on the right is finite (by the bound on the left), we deduce that $\nu\geq 0$. Finally, as $q_0(y)$ is bounded below on $\cY$, we may increase $\nu$ by some positive amount without affecting \eqref{eq:strict_separate_point_epi}.

Then,
\begin{align*}
\ip{\mu,y} + \nu t &< \min_{y'}\set{\ip{\mu,y'} + \nu q_0(y'):\, y'\in\cY}\\
&= \min_{y',Y'}\set{\ip{\mu,y'} + \nu(\ip{A_0, Y'} + 2\ip{b_0, y'} + c_0):\, \begin{array}
	{l}
	Y'\succeq y'y'^{\top}\\
	\ip{A_i, Y'} + 2\ip{b_i,y'} + c_i \geq 0 ,\,\forall i\in[m]
\end{array}}\\
&\leq \min_{Y}\set{\ip{\mu,y} + \nu(\ip{A_0, Y} + 2\ip{b_0, y} + c_0):\, \begin{array}
	{l}
	Y\succeq yy^\top\\
	\ip{A_i, Y} + 2\ip{b_i,y} + c_i \geq 0 ,\,\forall i\in[m]
\end{array}}.
\end{align*}
Here, the first line follows by substituting the optimal value of $t'$ in \eqref{eq:strict_separate_point_epi}, the second line follows from \cref{lem:SDPtightness} (which we can apply as $\cS(\cM)$ is ROG and the SDP on the second line has finite objective value), and the third line follows by selecting $y' = y$.

Subtracting $\ip{\mu,y}$ from both sides and dividing by $\nu>0$ leads to the desired conclusion that $(y,t)\notin\cR$ and completes the proof.\qedhere
\end{proof}

Applying a perturbation argument to
\cref{prop:epi_conv_hull_bounded_quadratic_set} allows us to additionally relax
the assumption that $A_0 - \sum_{i=1}^m \lambda^*_i A_i$ is positive definite.

\begin{corollary}
  \label{cor:epi_conv_hull_bounded_quadratic_set_psd}
  Suppose there exists $\lambda^*\in\R^m_+$ such that $A_0 - \sum_{i=1}^m \lambda^*_i A_i$ is positive semidefinite.
  If $\cS(\cM)$ is ROG, then the closed convex hull of
  \begin{align*}
    \epi \coloneqq \set{(y,t)\in\R^{n-1}\times \R:\, \begin{array}
                                                       {l}
                                                       q_0(y) \leq t\\
                                                       y\in\cY
                                                     \end{array}}
  \end{align*}
  is the closure of a semidefinite-representable set:
  \begin{align*}
    \clconv(\epi) = \cl\left(\set{(y,t)\in\R^{n-1}\times \R:\, \begin{array}
                                                        {l}
                                                        \exists Y \succeq yy^\top:\\
                                                        \ip{A_0, Y} + 2\ip{b_0, y} + c_0 \leq t\\
                                                        \ip{A_i, Y} + 2\ip{b_i,y} + c_i \geq 0 ,\,\forall i\in[m]
                                                      \end{array}}\right).
  \end{align*}
\end{corollary}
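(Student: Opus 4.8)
The plan is to reduce to \cref{prop:epi_conv_hull_bounded_quadratic_set} by perturbing the objective to restore strict definiteness, and then to take a limit as the perturbation vanishes. Write $\cR$ for the semidefinite-representable set appearing inside the closure on the right-hand side. The easy inclusion $\clconv(\epi)\subseteq\cl(\cR)$ requires no perturbation at all: taking $Y=yy^\top$ shows $\epi\subseteq\cR$, and $\cR$ is convex, being the projection onto $(y,t)$ of the convex SDP feasible set in the variables $(y,t,Y)$ (the constraint $Y\succeq yy^\top$ is the LMI $\left(\begin{smallmatrix}1 & y^\top\\ y & Y\end{smallmatrix}\right)\succeq 0$). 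Hence $\cl(\cR)$ is a closed convex set containing $\epi$, and therefore contains $\clconv(\epi)$.

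For the reverse inclusion, since $\clconv(\epi)$ is closed it suffices to prove $\cR\subseteq\clconv(\epi)$. For $\delta>0$ I would introduce the perturbed objective $q_0^\delta(y):=q_0(y)+\delta\norm{y}^2$, whose quadratic part is $A_0^\delta:=A_0+\delta I$. Since $A_0-\sum_i\lambda_i^* A_i\succeq 0$ by hypothesis, we get $A_0^\delta-\sum_i\lambda_i^* A_i\succ 0$ with the \emph{same} multipliers $\lambda^*$, while the constraints defining $\cY$ and the hypothesis that $\cS(\cM)$ is ROG are untouched. Thus \cref{prop:epi_conv_hull_bounded_quadratic_set} applies verbatim to the perturbed data and yields $\clconv(\epi^\delta)=\cR^\delta$, where $\epi^\delta$ and $\cR^\delta$ are the epigraph and SDP-representable set built from $q_0^\delta$. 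Because $q_0^\delta\geq q_0$ we have $\epi^\delta\subseteq\epi$, so $\cR^\delta=\clconv(\epi^\delta)\subseteq\clconv(\epi)$.

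The final step is the limit $\delta\to 0^+$. Given $(y,t)\in\cR$ with a witness $Y\succeq yy^\top$, note that $\ip{A_0^\delta,Y}=\ip{A_0,Y}+\delta\tr(Y)$ and that $\tr(Y)\geq 0$ because $Y\succeq 0$. Hence for any $\epsilon>0$, choosing $\delta$ small enough that $\delta\tr(Y)\leq\epsilon$ makes the same $Y$ a witness for $(y,t+\epsilon)\in\cR^\delta\subseteq\clconv(\epi)$. Letting $\epsilon\to 0$ and using closedness of $\clconv(\epi)$ gives $(y,t)\in\clconv(\epi)$, completing the reverse inclusion.

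The only genuinely delicate point is the bookkeeping in this last step: the set $\cR^\delta$ is strictly smaller than $\cR$, since its objective constraint carries the extra nonnegative term $\delta\tr(Y)$, so $\cR$ is recovered only in the limit. This is precisely why the corollary's right-hand side needs the outer closure that was absent from \cref{prop:epi_conv_hull_bounded_quadratic_set}. Everything else is routine, and in particular no fresh appeal to the ROG structure is needed beyond the one already made inside \cref{prop:epi_conv_hull_bounded_quadratic_set}. The degenerate case $\cY=\emptyset$ is handled automatically: then $\epi^\delta=\emptyset$ forces $\cR^\delta=\emptyset$, and the implication $(y,t)\in\cR\Rightarrow(y,t+\epsilon)\in\cR^\delta$ then forces $\cR=\emptyset$ as well, so both sides are empty.
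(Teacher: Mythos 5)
Your proof is correct and follows essentially the same route as the paper's: perturb the objective to $q_0(y)+\delta\norm{y}^2$ so that $A_0+\delta I-\sum_{i=1}^m\lambda_i^*A_i\succ 0$, apply \cref{prop:epi_conv_hull_bounded_quadratic_set} to the perturbed data, use $q_0\leq q_0^\delta$ to get $\cR^\delta\subseteq\clconv(\epi)$, and pass to the limit via closedness of $\clconv(\epi)$. The only (cosmetic) difference is that the paper fixes $\delta=\epsilon/\tr(\hat Y)$ while you merely require $\delta\tr(Y)\leq\epsilon$, which incidentally sidesteps the degenerate case $\tr(\hat Y)=0$ without comment.
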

\begin{proof}
Let $\cR$ denote the set inside the right hand side so that the desired
conclusion is $\clconv(\epi) = \overline{\cR}$.

$(\subseteq)$ This direction follows simply from observing that $\epi\subseteq
\cR$ and that $\cR$ is convex.

$(\supseteq)$ Let $(\hat y,\hat t)\in \cR$ and let $\hat Y$ be a matrix certifying
$(\hat y, \hat t)\in\cR$.
It suffices to show that
$(\hat y,\hat t+\epsilon)\in\clconv(\epi)$ for all $\epsilon>0$. Let $A_0' \coloneqq A_0 +\delta
I$ where we have set $\delta\coloneqq \epsilon/\tr(\hat{Y})$.
Define $q_0'(y) \coloneqq q_0(y) + \delta \norm{y}^2 = y^\top A_0' y + 2\ip{b_0, y} + c_0$.
Note that by construction,
\begin{align*}
\ip{A_0', \hat Y} + 2\ip{b_0, \hat y} + c_0 = \left( \ip{A_0, \hat Y} + 2\ip{b_0, \hat y} + c_0 \right) + \epsilon \leq \hat t +\epsilon
\end{align*}
so that
\begin{align*}
(\hat y, \hat t+\epsilon) \in \set{(y, t)\in\R^{n-1}\times \R:\ \begin{array}
                                                                  {l}
                                                                  \exists Y\succeq yy^\top:\\
                                                                  \ip{A_0', Y} + 2\ip{b_0, y} + c_0 \leq t\\
                                                                  \ip{A_i, Y} + 2\ip{b_i, y} + c_i \geq 0 ,\,\forall i\in[m]
                                                                \end{array}}.
\end{align*}

Next, as $\cS(\cM)$ is ROG and $A_0' - \sum_{i=1}^m \lambda_i^* A_i = \left( A_0 -\sum_{i=1}^m \lambda_i^* A_i \right) + \delta I$ is positive definite,
we may apply \cref{prop:epi_conv_hull_bounded_quadratic_set} with
$q_0'(y)$ to deduce that
\begin{align*}
(\hat y, \hat t+\epsilon) &\in \clconv\left( \set{(y,t): \begin{array}
                                                          {l}
                                                          q_0'(y) \leq t\\
                                                          y\in\cY
                                                        \end{array}} \right)\\
  &\subseteq \clconv(\epi).
\end{align*}
Here, the second containment follows by noting that $q_0(y) \leq q_0'(y)$ for
all $y$.\qedhere
\end{proof}

The following example shows how to recover \cite[Theorem 4]{wang2020generalized} as an immediate corollary of \cref{lem:M_leq_1,cor:epi_conv_hull_bounded_quadratic_set_psd}.
\begin{example}
Consider a set $\cY$ defined by a single quadratic inequality constraint
\begin{align*}
\cY = \set{y\in\R^{n-1}:\, q_1(y)\geq 0}.
\end{align*}
The associated cone $\cS(\set{M_1})$ is ROG by \cref{lem:M_leq_1}. Next, suppose $q_0(x)$ is a quadratic objective function for which there exists $\lambda\geq 0$ such that $A_0 - \lambda A_1\succeq 0$. Then, \cref{cor:epi_conv_hull_bounded_quadratic_set_psd} implies that
\begin{align*}
\clconv\left(\set{(y,t)\in\R^{n-1}\times\R:\, \begin{array}
	{l}
	q_0(y) \leq t\\
	q_1(y) \geq 0
\end{array}}\right) &=
\cl\left(\set{(y,t)\in\R^{n-1}\times \R:\, \begin{array}
	{l}
	\exists Y\succeq yy^\top\,:\\
	\ip{A_0, Y} + 2\ip{b_0, y} + c_0 \leq t\\
	\ip{A_1, Y} + 2\ip{b_1, y} + c_1 \geq 0
\end{array}}\right).\qedhere
\end{align*}
\end{example}

We next examine a classical example related to the ``perspective reformulation/relaxation'' trick~\cite{gunluk2010perspective,ceria1999convex,frangioni2006perspective}
and demonstrate how this convex hull result can be recovered using our ROG toolsets.
The nonconvex set in this example will involve both binary and continuous variables and complementarity constraints.
\begin{example}
\label{ex:mixed_binary_set}
Consider the quadratically constrained set
\begin{align*}
\cY = \set{y\in\R^2:\, \begin{array}
	{l}
	(1-y_1)y_1 = 0\\
	(1-y_1)y_2 = 0
\end{array}}.
\end{align*}
In words, $y_1$ is constrained to be a binary variable, $y_2$ is allowed to be
arbitrary when $y_1=1$ is ``on'' and forced to be zero when $y_1=0$ is ``off.''

Letting
$M_1\coloneqq \Sym((e_3 - e_1)e_1^\top)$ and $M_2\coloneqq\Sym((e_3-e_1)e_2^\top)$, we have that
\begin{align*}
\cY = \set{y\in\R^2:\, \begin{array}
	{l}
	\begin{pmatrix}y\\1\end{pmatrix}^\top
	M_1
	\begin{pmatrix}y\\1\end{pmatrix} = 0\\
	\begin{pmatrix}y\\1\end{pmatrix}^\top
	M_2
	\begin{pmatrix}y\\1\end{pmatrix} = 0\\
\end{array}}.
\end{align*}
Let $\cM = \set{M_1,M_2}$
and note that $\cT(\cM)$ is ROG by \cref{cor:ac_bc_suffices}.

Next, we rewrite $\cY$ using inequality constraints so that we may apply
\cref{prop:epi_conv_hull_bounded_quadratic_set}.
Letting $q_1(y) = (1-y_1)y_1$, $q_2(y)=-(1-y_1)y_1$, $q_3(y)=(1-y_1)y_2$,
and $q_4(y)=-(1-y_1)y_2$, we may write
\begin{align*}
  \cY = \set{y\in\R^2:\, \begin{array}
                           {l}
                           q_i(y)\leq 0,\,\forall i \in[4]
                         \end{array}}.
\end{align*}
Note that $A_1 = -e_1e_1^\top$, $A_2 = e_1e_1^\top$, $A_3 = -\Sym(e_1e_2^\top)$,
and $A_4 = \Sym(e_1e_2^\top)$.
Setting $q_0(y) = y_2^2$, we have that $A_0 = e_2e_2^\top$.
Then, as $A_0 + A_2\succ 0$, we deduce that the assumptions
of \cref{prop:epi_conv_hull_bounded_quadratic_set} hold.
Applying \cref{prop:epi_conv_hull_bounded_quadratic_set} then gives
\begin{align*}
\clconv\set{(y,t)\in\R^2\times \R:\, \begin{array}
	{l}
	y_2^2 \leq t\\
	(1-y_1)y_1 = 0\\
	(1-y_1)y_2 = 0
\end{array}}
&=\set{(y,t)\in\R^2\times\R:\, \begin{array}
	{l}
	\exists Y\succeq yy^\top\\
	Y_{2,2}\leq t\\
	y_1 - Y_{1,1} = 0\\
	y_2 - Y_{1,2} = 0
\end{array}}\\
&= \set{(y,t)\in\R^2\times\R:\, \begin{array}
	{l}
	\begin{pmatrix}
	y_1 & y_2\\ y_2 & t
	\end{pmatrix}\succeq yy^\top
\end{array}}\\
&= \set{(y,t)\in\R^2\times\R:\, \begin{array}
	{l}
	y_1\geq y_1^2\\
	t\geq y_2^2\\
	(y_1-y_1^2) (t-y_2^2) \geq (y_2-y_1y_2)^2
\end{array}}
.
\end{align*}
Note that the first constraint in the last formulation implies that $y_1\in[0,1]$. By expanding and rearranging, we can write the last constraint as
\begin{align*}
0 &\leq (y_1-y_1^2) (t-y_2^2) - (y_2-y_1y_2)^2 
= y_1 t + y_1 y_2^2 - y_1^2 t - y_2^2 = (y_1t-y_2^2)(1-y_1).
\end{align*} 
When $y_1\in[0,1)$, this constraint is equivalent to $y_1t-y_2^2\geq 0$.
On the other hand when $y_1=1$, the constraint $y_1t-y_2^2\geq 0$ is redundant. Hence, we deduce that
\begin{align*}
\clconv\set{(y,t)\in\R^2\times \R:\, \begin{array}
	{l}
	y_2^2 \leq t\\
	(1-y_1)y_1 = 0\\
	(1-y_1)y_2 = 0
\end{array}}
&=\set{(y,t)\in\R^2\times\R:\, \begin{array}
	{l}
	y_1\in[0,1]\\
	y_1t \geq y_2^2
\end{array}}
.
\end{align*}
This gives the well-known perspective formulation of $\clconv(\cY)$.\qedhere
\end{example}

\begin{remark}
There are few known sufficient conditions guaranteeing that the convex hull of the epigraph of a QCQP is given by its SDP relaxation.
The conditions presented by \citet[Theorems 1 and 7]{wang2019tightness} are among the most general in this direction.
We claim that both \cite[Theorems 1 and 7]{wang2019tightness} are incomparable with \cref{prop:epi_conv_hull_bounded_quadratic_set}.
Note that \cite[Theorem 1]{wang2019tightness} cannot be applied directly to \cref{ex:mixed_binary_set}: the set of \textit{convex Lagrange multipliers} (see \cite[Section 2.1]{wang2019tightness}) for this example is
\begin{align*}
\Gamma&\coloneqq \set{\gamma\in\R^2:\, \begin{pmatrix}
0 & \\ & 1	
\end{pmatrix} + \gamma_1 \begin{pmatrix}
-1 &\\&0
\end{pmatrix} + \gamma_2 \begin{pmatrix}
0 & -1/2\\
-1/2 & 0
\end{pmatrix}\succeq 0}\\
&= \set{\gamma\in\R^2:\, \gamma_1\leq 0,\, \abs{\gamma_2}\leq \sqrt{-\gamma_1}},
\end{align*}
which is not polyhedral.
On the other hand, \cite[Theorem 1]{wang2019tightness} can be applied to QCQPs where the $A_i$s satisfy a ``symmetry'' condition. The following QCQP is such an example. Consider
\begin{align*}
\inf_{y\in\R^4}\set{\norm{y}^2:\, \begin{array}
	{l}
	y^\top \left(\begin{smallmatrix}
	1 &&&\\
	& 1 &&\\
	&& -1 &\\
	&&& -1
\end{smallmatrix}\right)y + 1\geq 0\\
y^\top \left(\begin{smallmatrix}
	-2 &&&\\
	& -2 &&\\
	&& 1 &\\
	&&& 1
\end{smallmatrix}\right)y + 1\geq 0
\end{array}}.
\end{align*}
The corresponding set $\cM$ for this example is $\cM = \set{\Diag(1,1,-1,-1,1),\Diag(-2,-2,1,1,1)}$.
\cref{thm:two_LMI_NS} implies that $\cS(\cM)$ is not ROG and thus \cref{prop:epi_conv_hull_bounded_quadratic_set} cannot be applied to this example.
We conclude that \cite[Theorem 1]{wang2019tightness} and \cref{prop:epi_conv_hull_bounded_quadratic_set} are incomparable. Similar examples can be constructed to show that \cite[Theorem 7]{wang2019tightness} and \cref{prop:epi_conv_hull_bounded_quadratic_set} are incomparable.\qedhere
\end{remark}

\section*{Acknowledgments}
The authors wish to thank the review team for their constructive feedback that improved the presentation of the material in this paper. 
This research is supported in part by NSF grant CMMI 1454548 and ONR grant N00014-19-1-2321. 
Part of this work was done while the second author was visiting the Simons Institute for the Theory of Computing. It was partially supported by the DIMACS/Simons Collaboration on Bridging Continuous and Discrete Optimization through NSF grant CCF-1740425.

{
\bibliographystyle{plainnat} 
\bibliography{ROG_bib_clean.bib}
}

\appendix
\section{Proof of \cref{lem:N_at_most_four_lines}}
\label{ap:bezout_proof}
For completeness we restate \cref{lem:N_at_most_four_lines}.

\bezouttheorem*
\begin{proof}
As $\alpha_1 M_1 + \alpha_2 M_2 \notin \S^3_+$ for any $(\alpha_1, \alpha_2)\neq(0,0)$, we
have that $M_1$ and $M_2$ must each have rank either two or three.
We will break the proof into two cases.

Suppose first that $\rank(M_1)=\rank(M_2)=2$.
As $M_1,M_2\notin\S^3_+$, each $M_i$ has exactly one positive and one negative eigenvalue. We can then write $M_1 = \Sym(ab^\top)$ and $M_2 = \Sym(cd^\top)$.
Then
\begin{align*}
\cN(\cM) &= \set{x:\, x^\top(ab^\top)x = x^\top(cd^\top)x = 0}\\
&= (a^\perp \cup b^\perp)\cap (c^\perp \cup d^\perp)\\
&= (a^\perp \cap c^\perp) \cup (a^\perp \cap d^\perp)  \cup(b^\perp \cap c^\perp) \cup (b^\perp \cap d^\perp).
\end{align*}
As condition (ii) does not hold, each of the four spaces on the final line have dimension one. Thus $\cN(\cM)$ is the union of at most four distinct lines.

Next suppose without loss of generality that $\rank(M_1)= 3$. As $M_1\notin\S^3_+$, we may assume that it has two positive eigenvalues and one negative eigenvalue. Performing a change of basis, it suffices to consider when
\begin{align*}
M_1 = \begin{pmatrix}
	1 &&\\ &1& \\ &&-1
\end{pmatrix}
\qquad\text{and}\qquad
M_2 = \begin{pmatrix}
	a& b& c\\ b& d& e\\ c&e&f
\end{pmatrix}.
\end{align*}
We will consider the intersection $\cN(\cM)\cap\set{x\in\R^3:\, x_3 = 1}$.
Note that if $x\in\cN(\cM)$ has $x_3$ coordinate equal to zero, then $x = 0$.
Thus, the number of distinct lines in $\cN(\cM)$ is equal to the number of distinct points in
\begin{align*}
\cP \coloneqq \set{(x_1,x_2)\in\R^2:\, \begin{array}
	{l}
	x_1^2+x_2^2 -1 = 0\\
	\left(ax_1^2 +dx_2^2 + 2cx_1 + f\right) + x_2 \left(2bx_1 + 2e\right) = 0
\end{array}}.
\end{align*}
Suppose that $\cN(\cM)$ contains at least five lines so that $\cP$ contains at least five points. Without loss of generality, we may assume that the $x_1$ coordinates of these five points are distinct (else, perform an orthonormal change of basis on the first two dimensions). Let the $x_1$ coordinates of these five points be $\xi_1,\xi_2,\dots,\xi_5$.
For each $\xi_i$, by the first constraint in the definition of $\cP$, we have that the corresponding $x_2$ coordinate must be either $\sqrt{1-\xi_i^2}$ or $-\sqrt{1-\xi_i^2}$. Hence, 
\begin{align*}
&\left[\left(a\xi^2 +d(1-\xi^2) + 2c\xi + f\right) + \sqrt{1-\xi^2} \left(2b\xi + 2e\right)\right]\left[\left(a\xi^2 +d(1-\xi^2) + 2c\xi + f\right) - \sqrt{1-\xi^2} \left(2b\xi + 2e\right)\right]\\
&\quad =
\left[(a-d)^2+4b^2\right] \xi^4 +
\left[4(a-d)c +8be\right] \xi^3 +
\left[2(a-d)(d+f)+4c^2+4e^2-4b^2\right] \xi^2 +\\
&\qquad\quad
\left[4c(d+f)-8be\right] \xi +
\left[(d+f)^2 - 4e^2\right]
\end{align*}
is a degree-4 polynomial in $\xi$ which is zero on five distinct points $\xi_1,\dots,\xi_5$. We conclude that this polynomial is identically zero. The coefficient of $\xi^4$ implies that $a=d$ and $b = 0$. The coefficient of $\xi^2$ implies that $c = e= 0$. The constant term implies that $f=-d$. We conclude that $M_2$ has the form
\begin{align*}
M_2 = \begin{pmatrix}
	a && \\ &a&\\ &&-a
\end{pmatrix}.
\end{align*}
This contradicts the assumption that there does not exist an $(\alpha_1,\alpha_2)\neq(0,0)$ such that $\alpha_1M_1 + \alpha_2M_2 \in\S^n_+$.\qedhere
\end{proof}

\end{document}